\documentclass[11pt]{amsart}
\usepackage{amssymb}
\usepackage[T1]{fontenc}
\usepackage[utf8]{inputenc}
\usepackage{lmodern}
\usepackage[margin=1in]{geometry}
\usepackage{amsmath,amssymb,amsthm,mathtools}
\usepackage{enumitem}
\usepackage[colorlinks=true,linkcolor=blue,citecolor=blue,urlcolor=blue]{hyperref}
\usepackage{tikz-cd}
\usepackage{tikz}
\usetikzlibrary{arrows.meta}
\usepackage{amsmath,amssymb,amsthm}
\theoremstyle{plain}
\newtheorem{rrhutheorem}{Theorem}[section]
\newtheorem{rrhucorollary}[rrhutheorem]{Corollary}
\newtheorem{rrhuquestion}[rrhutheorem]{Question}

\newcommand{\htop}{h_{\mathrm{top}}}

\newcommand{\cT}{\mathcal T}

\usepackage{tcolorbox}

\newtheorem{theorem}{Theorem}[section]
\newtheorem{proposition}[theorem]{Proposition}
\newtheorem{lemma}[theorem]{Lemma}
\newtheorem{corollary}[theorem]{Corollary}
\newtheorem{claim}{Claim}[section]

\theoremstyle{definition}
\newtheorem{definition}[theorem]{Definition}
\newtheorem{question}[theorem]{Question}

\theoremstyle{remark}
\newtheorem{remark}[theorem]{Remark}

\numberwithin{equation}{section}
\usepackage{hyperref}

\usepackage{cleveref}

\providecommand{\cT}{\mathcal{T}}
\providecommand{\cZ}{\mathcal{Z}}
\providecommand{\cF}{\mathcal{F}}

\author{Pablo D. Carrasco}
\address[P.D. Carrasco]{ICEx-UFMG, Avda. Presidente Antonio Carlos 6627, Belo Horizonte-MG, BR31270-90}
\email{pdcarrasco@ufmg.br}
\thanks{P.D. Carrasco was partially supported by CNPq (Produtividade em Pesquisa) 305197/2025-8, Projeto Universal CNPq 401737/2025-0, FAPEMIG APQ-02338-24, and CAPES through the MATH-AMSUD program.}

\author{Federico Rodriguez-Hertz}
\address[F. Rodriguez-Hertz]{Penn State, 227 McAllister Building, University Park, State College, PA16802}
\email{hertz@math.psu.edu}
 \thanks{F. Rodriguez-Hertz was partially supported by NSF grant DMS-2453688.}

\author{Jana Rodriguez-Hertz}
\author{Raúl Ures}
\address[J. Rodriguez-Hertz and R. Ures]
{ Department of Mathematics and Shenzhen International Center for Mathematics - Southern University of Science and Technology,
1088 Xueyuan Avenue, Shenzhen 518055, P.R. China}
\email[J. Rodriguez-Hertz]{rhertz@sustech.edu.cn}
\email[R. Ures]{ures@sustech.edu.cn}
\title{Entropy and periodic points in pseudo-Anosov isotopy classes}
\keywords{Pseudo-Anosov homeomorphism, Nielsen classes, entropy of maximal entropy}
\begin{document}

\begin{abstract}
Let $f$ be a pseudo-Anosov homeomorphism of a closed orientable surface of genus at least two. We prove that, for every $C^\infty$ diffeomorphism $g$ isotopic to $f$, the exponential growth rate of the number of nonempty Nielsen classes of its iterates equals $h_{\mathrm{top}}(g)$ if and only if
$h_{\mathrm{top}}(g)=h_{\mathrm{top}}(f)$. Thus Nielsen growth captures the full entropy precisely for representatives minimizing entropy in their isotopy class. For $C^{1+\alpha}$ diffeomorphisms, $\alpha>0$, we identify this growth rate with the entropy of the global orbit quotient in Fathi's hyperbolic extension and establish Bowen's periodic-point formula for the quotient. We also prove that quotient entropy is upper
semicontinuous in the $C^0$ topology throughout the isotopy class and continuous at every $C^{1+\alpha}$ diffeomorphism, even under perturbations by arbitrary homeomorphisms in the class.
\end{abstract}
\maketitle

\section{Introduction}

Let $M$ be a closed connected orientable surface of genus at least two. The Nielsen--Thurston classification \cite{Thurston1988} describes its mapping classes in terms of periodic, reducible, and pseudo-Anosov representatives. Periodic representatives have finite order. Reducible
representatives preserve a nonempty finite collection of disjoint essential simple closed curves; after cutting along a suitable such collection and passing to an iterate, the induced mapping classes on the complementary components are periodic or pseudo-Anosov. The pseudo-Anosov pieces
are therefore the source of positive entropy in the canonical decomposition. Their importance for dynamics lies in the complexity they impose on every representative of the isotopy class.

We fix a pseudo-Anosov homeomorphism $f\colon M\to M$. It preserves two transverse singular measured foliations, multiplying their transverse measures by $\lambda$ and $\lambda^{-1}$, where $\lambda>1$ is its stretch factor. This expansion and contraction determine its entropy,
which is minimal in its isotopy class:
\[
  h_{\mathrm{top}}(f)=\log\lambda
  =\inf\{\htop(g): g\text{ isotopic to }f\}.
\]
Thus the isotopy class already requires positive entropy, while its representatives may have additional dynamical complexity.

The complexity of map is typically related to the presence of periodic points. In the case of surfaces, the powerful Nielsen's fixed point theory \cite{Nielsen1927} provides a way to group periodic points according to some topological criterion. Let $g\simeq f$ and $n\geq1$ where, from now on, $g\simeq f$ means that $g$ is isotopic to $f$.

\begin{definition}\label{def:NielsenEquivalent}
Two points $p,q\in\operatorname{Fix}(g^n)$ are \emph{Nielsen equivalent} if there is a path $c$ from $p$ to $q$ such that $c$ and
$g^n\circ c$ are homotopic while their endpoints remain fixed.
\end{definition}

The resulting equivalence classes are the \emph{Nielsen classes of $g^n$}. We denote
\[
  C_n(g)=\#\{\text{nonempty Nielsen classes of }g^n\},
\]
and define
\begin{align}\label{eq:Nielsengrowth}
h_N(g)=\limsup_{n\to\infty}
  \frac1n\log\max\{1,C_n(g)\}.
\end{align}

We count every nonempty class. Our purpose is to determine how much of the dynamical complexity of $g$ is detected by the growth of these classes, and when this growth captures its full topological entropy.

The basic bounds are
\[
  h_{\mathrm{top}}(f)=h_N(f)
  \leq h_N(g)\leq h_{\mathrm{top}}(g).
\]
Handel's results \cite[Corollary~3.6]{Handel1988} characterize the minimum: $h_N(g)=h_{\mathrm{top}}(f)$
precisely when $g$ admits a semiconjugacy onto $f$ homotopic to the identity. To compare Nielsen growth with the entropy of other representatives, we use Fathi's hyperbolic extension \cite{Fathi1990} and the global orbit quotients constructed in \cite{HHU2004}. These quotients place the dynamics of every representative inside a common ambient system.

Let $\widetilde M$ be the universal cover of $M$, with deck group $\Gamma$, and fix a lift $F$ of $f$. Collapsing each leaf of the lifted stable or unstable foliation gives a real tree $\mathcal T^s$ or $\mathcal T^u$, respectively, whose metric is induced
by the transverse measure. Write $\widehat{\mathcal T}^s$ and $\widehat{\mathcal T}^u$
for their metric completions, and let
\begin{align*}
&\mathcal T=\widehat{\mathcal T}^s\times\widehat{\mathcal T}^u\\
&Z=\mathcal T/\Gamma,
\end{align*}
where $\Gamma$ acts diagonally. The lift $F$ induces a product homeomorphism $\widetilde\Phi$ of $\mathcal T$, expanding distances in $\widehat{\mathcal T}^s$ by $\lambda$ and contracting distances in $\widehat{\mathcal T}^u$ by $\lambda^{-1}$. The superscript records the foliation being collapsed:
the stable-leaf tree is the expanding coordinate. The product map descends to a homeomorphism $\Phi\colon Z\to Z$. The pairs of intersecting stable
and unstable leaves determine an embedded copy $L(f)$ of $M$ in $Z$, and $\Phi|_{L(f)}$ is conjugate to $f$.

For $g\simeq f$, choose the lift $G$ obtained by lifting an isotopy from $f$ to $g$, starting at $F$.

\begin{definition}\label{def:globalequivalent}
Two points $x,y\in M$ are globally orbit equivalent if suitable lifts satisfy
\[
  x\sim_g y   \quad\Longleftrightarrow\quad  \sup_{n\in\mathbb Z} D(G^n\widetilde x,G^n\widetilde y)<\infty,
\]
where $D$ is the lifted metric induced by the measured foliations.  
\end{definition}
Thus this relation identifies points whose lifted orbits remain at bounded distance in both time directions. Fathi's construction gives a continuous surjection $H_g\colon M\to L(g)$ onto a compact $\Phi$-invariant subset of $Z$ such that
\[
  H_g\circ g=\Phi_g\circ H_g,
  \qquad
  \Phi_g=\Phi|_{L(g)},
  \qquad
  L(f)\subset L(g).
\]
The last three authors of this article identified the fibers of $H_g$ precisely \cite{HHU2004}:
\[
  H_g(x)=H_g(y)   \quad\Longleftrightarrow\quad  x\sim_g y.
\]
It follows that $L(g)$ realizes the quotient $M/{\sim_g}$, and $\Phi_g$ realizes its induced dynamics. The ambient map $\Phi$ is fixed throughout the isotopy class; the representative $g$ determines the invariant set $L(g)$ on which it is restricted.

The inclusion $L(f)\subset L(g)$ expresses the persistence of the pseudo-Anosov dynamics under global shadowing. Handel \cite{Handel1985} proved that every $g\simeq f$ has a compact invariant subset whose dynamics factors onto $f$; in this construction, that subset is
\[
  J_g=H_g^{-1}(L(f)).
\]
The complement $L(g)\setminus L(f)$ records the global orbit classes that do not correspond to orbits of $f$.

For periodic points, the relation with Nielsen classes is particularly direct. If $p,q\in\operatorname{Fix}(g^n)$,
then $H_g(p)=H_g(q)$ if and only if $p$ and $q$ are Nielsen equivalent \cite[Lemma~2.8]{LU2003}. Thus each nonempty Nielsen class of $g^n$ determines
a point of $\operatorname{Fix}(\Phi_g^n)$, and distinct classes determine distinct points. Our first theorem identifies the exponential growth rates of these two counts with the entropy of the quotient.

\begin{theorem}[Entropy formula]
\label{thm:entropy.formula}
Let $g$ be a $C^{1+\alpha}$ diffeomorphism, $\alpha>0$,
or a $C^1$ Axiom-A diffeomorphism, in the isotopy class
of $f$. Then
\[
  h_N(g)=h_{\mathrm{top}}(\Phi_g) =\limsup_{n\to\infty} \frac1n\log\max\{1,\#\operatorname{Fix}(\Phi_g^n)\}.
\]
\end{theorem}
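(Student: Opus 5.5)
We will prove the chain of inequalities
\[
h_{\mathrm{top}}(\Phi_g)\;\le\; h_N(g)\;\le\;\limsup_{n\to\infty}\frac1n\log\max\{1,\#\operatorname{Fix}(\Phi_g^n)\}\;\le\; h_{\mathrm{top}}(\Phi_g).
\]

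\emph{Middle inequality.} This is immediate from \cite[Lemma~2.8]{LU2003}. The map sending a nonempty Nielsen class of $g^n$ to its common $H_g$-image is injective into $\operatorname{Fix}(\Phi_g^n)$, so $C_n(g)\le\#\operatorname{Fix}(\Phi_g^n)$.

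\emph{Right inequality.} This is a Bowen-type upper bound and uses the hyperbolic structure of the ambient map $\Phi$. On $\mathcal T$ the lift $\widetilde\Phi$ expands the first factor by $\lambda$ and contracts the second by $\lambda^{-1}$. Hence $\Phi$ is expansive on $Z$, with a uniform expansivity constant $\epsilon_0$ coming from the injectivity radius of the $\Gamma$-action on the compact quotient.

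Fix $\epsilon<\epsilon_0/2$. Two distinct points of $\operatorname{Fix}(\Phi_g^n)$ that stay $\epsilon$-close along an orbit segment of length $n$ stay $\epsilon$-close forever, by periodicity. Expansivity then forces them to coincide. Therefore $\operatorname{Fix}(\Phi_g^n)$ is $(n,\epsilon)$-separated inside the compact invariant set $L(g)$, so
\[
\#\operatorname{Fix}(\Phi_g^n)\le s_n(\epsilon,\Phi_g),
\]
and taking $\limsup$ gives the bound by $h_{\mathrm{top}}(\Phi_g)$. One point must be checked here: expansivity should hold for the metric on $Z$, not just locally on $\mathcal T$. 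We plan to verify this by lifting close orbits to $\mathcal T$ and using that $\Gamma$ acts properly with a uniform displacement bound.

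\emph{Left inequality.} This is the main obstacle and the only place the regularity hypothesis enters. We use the variational principle for $\Phi_g$: take an ergodic $\Phi_g$-invariant measure $\nu$ with $h_\nu(\Phi_g)>h_{\mathrm{top}}(\Phi_g)-\delta$. Since $H_g$ is a continuous surjective semiconjugacy, $\nu$ lifts to an ergodic $g$-invariant measure $\mu$ with $(H_g)_*\mu=\nu$ and $h_\mu(g)\ge h_\nu(\Phi_g)>0$.

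\emph{Case $g\in C^{1+\alpha}$.} Katok's horseshoe theorem gives a hyperbolic horseshoe $\Lambda_k\subset M$ for some iterate $g^k$. It is conjugate to a full shift and satisfies $h_{\mathrm{top}}(g|_{\Lambda_k})>h_\mu(g)-\delta$. Moreover, $(H_g)_*$ of the measures of maximal entropy on $\Lambda_k$ is close to $\nu$, so that $H_g$ does not collapse much entropy.

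The delicate part is to ensure that $H_g|_{\Lambda_k}$ loses essentially no entropy. We will argue with the fibers of $H_g$, namely the global orbit classes. These have zero entropy (equivalently, $h_{\mathrm{top}}(g,H_g^{-1}(z))=0$ for every $z$), because points in a fiber have lifted orbits at bounded distance. The $D$-distance controls the lifted Riemannian distance up to a constant, so the fibers lift to bounded sets whose $G$-orbits stay bounded, and they cannot carry exponential growth of separated orbits. Bowen's inequality $h(g)\le h(\Phi_g)+\sup_z h(g,H_g^{-1}z)$, applied to $g|_{\Lambda_k}$, then yields
\[
h_{\mathrm{top}}(\Phi_g|_{H_g\Lambda_k})>h_{\mathrm{top}}(\Phi_g)-2\delta.
\]

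Periodic points of the horseshoe give $\#\operatorname{Fix}(g^{n}|_{\Lambda_k})\approx e^{n h(g|_{\Lambda_k})}$. The fibers restricted to $\Lambda_k$ are closed invariant sets of zero entropy. Combining the shift structure with the expansivity of $\Phi$ on $H_g(\Lambda_k)$, we plan to show that $H_g(\operatorname{Fix}(g^n|_{\Lambda_k}))$ still grows at rate at least $h_{\mathrm{top}}(\Phi_g)-3\delta$. Each of these image points is the image of a distinct Nielsen class, again by \cite[Lemma~2.8]{LU2003}. Hence $h_N(g)\ge h_{\mathrm{top}}(\Phi_g)-3\delta$, and letting $\delta\to 0$ gives the left inequality.

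\emph{Case $g$ a $C^1$ Axiom-A diffeomorphism.} Here the same argument runs with the basic sets of the spectral decomposition in place of Katok horseshoes. These basic sets carry specification and Bowen's periodic point counting.
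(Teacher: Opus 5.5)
Your middle and right inequalities are the paper's argument. One correction: $\mathcal Z$ is not compact, not even locally compact, so take the expansivity constant on the compact invariant set $L(g)$.

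\textbf{The gap is in the left inequality.} Your key claim, that every fiber $H_g^{-1}(z)$ has zero topological entropy, is false. Boundedness of the lifted $G$-orbits controls a fiber only at a fixed large scale. It says nothing about the growth of $(n,\epsilon)$-separated sets for small $\epsilon$.

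For instance, blow up a fixed point of $f$ to an invariant disk and insert a horseshoe inside it; this does not change the isotopy class. Every lift of the disk is bounded, so the whole disk is a single $\sim_g$-class, yet it carries arbitrarily large entropy. More intrinsically, zero fiber entropy together with Bowen's inequality would give $h_{\mathrm{top}}(g)=h_{\mathrm{top}}(\Phi_g)$ for every $g\simeq f$. That contradicts Theorem~\ref{thm:strict-entropy-gap} and Corollary~\ref{cor:entropy.minimization} whenever $h_{\mathrm{top}}(g)>h_{\mathrm{top}}(f)$.

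Consequently, a Katok horseshoe built from an arbitrary lift $\mu$ approximates $h_\mu(g)=h_\nu(\Phi_g)+h_\mu(g\mid\mathcal F_g)$. The relative term may be positive, so the horseshoe's entropy may live in the fibers and collapse under $H_g$. Weak-$*$ closeness of the projected measures to $\nu$ does not rescue this, since entropy for the expansive $\Phi$ is only upper semicontinuous, which is the wrong direction. Your Axiom A case has the same defect: Bowen's counting on basic sets counts points of $\operatorname{Fix}(g^n)$, and exponentially many of them can be Nielsen equivalent. Finally, your step that $H_g(\operatorname{Fix}(g^n|_{\Lambda_k}))$ grows at the rate $h_{\mathrm{top}}(\Phi_g|_{H_g\Lambda_k})$ is only announced, not proved.

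\textbf{The missing idea is to measure separation in the quotient from the start.} The paper proceeds as follows.
\begin{enumerate}
\item It applies the Brin--Katok formula to $\nu$ itself, not to $\mu$. On a set $A$ of positive $\nu$-measure, $\Phi_g$-Bowen balls of radius $r$ have measure at most $e^{-n(h_\nu(\Phi_g)-\epsilon)}$.
\item It takes a small piece $B$ of a Pesin block inside $H_g^{-1}(A)$. Khintchine recurrence then yields at least $c\,e^{n(h_\nu(\Phi_g)-\epsilon)}$ returns $x\in B\cap g^{-n}B$ whose images $H_g(x)$ are $(n,r)$-separated.
\item It chooses $\eta$ by uniform continuity of $H_g$. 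Katok's closing lemma (the uniform one in the Axiom A case) then gives $p_x\in\operatorname{Fix}(g^n)$ with $d_n^{\Phi_g}(H_g(p_x),H_g(x))<r/3$.
\item So the points $H_g(p_x)$ are $(n,r/3)$-separated, hence distinct, and the $p_x$ lie in distinct Nielsen classes.
\end{enumerate}
This replaces both your fiber-entropy step and your unproved periodic-point count. If you prefer horseshoes, build them from strips following these quotient-separated returns, as in the proof of Theorem~\ref{thm:quotient-entropy-continuity}. Then $H_g$ is injective on the horseshoe, and distinct periodic points of the horseshoe give distinct Nielsen classes immediately.
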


This answers \cite[Question~4.2]{HHU2004} under the stated regularity hypotheses. In particular, the quotient satisfies Bowen's periodic-point entropy
formula. Although $\Phi_g$ is expansive, this formula requires an argument beyond the hyperbolicity of the ambient extension: the sets $L(g)$ are not locally maximal, and the ambient shadowing property does not pass to their restrictions.

Our second theorem compares the entropy of the quotient with that of the surface dynamics. The geometry of $L(g)$ is essential here. These spaces need not be surfaces: their local stable and unstable components are dendrites, that is, locally connected continua containing no simple closed curve. Every periodic point outside $L(f)$ is an endpoint of both components \cite{HHU2004}. We use this endpoint geometry together with the recent method developed by the authors \cite{CHHU2026}, that relates entropy and semiconjugacy on surfaces. We emphasize that several additional difficulties appear while dealing with global orbit quotients, due to lack of any differentiable structure.


\begin{theorem}[Strict entropy gap]\label{thm:strict-entropy-gap}
If $g$ is a $C^\infty$ diffeomorphism isotopic to $f$,
then
\[
  h_{\mathrm{top}}(f)<h_{\mathrm{top}}(\Phi_g) \quad\Longrightarrow\quad
  h_{\mathrm{top}}(\Phi_g)<h_{\mathrm{top}}(g).
\]
\end{theorem}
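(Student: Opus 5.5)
We argue by contradiction: assume $g$ is $C^\infty$, $h_{\mathrm{top}}(f)<h_{\mathrm{top}}(\Phi_g)$, and $h_{\mathrm{top}}(\Phi_g)=h_{\mathrm{top}}(g)$ (the inequality $h_{\mathrm{top}}(\Phi_g)\le h_{\mathrm{top}}(g)$ holds because $\Phi_g$ is a factor of $g$ via $H_g$). Since $g$ is $C^\infty$, Newhouse's theorem provides a measure of maximal entropy for $g$. Moreover, Buzzi--Crovisier--Sarig give finitely many ergodic measures of maximal entropy, each hyperbolic and Bernoulli up to period. First we push these forward by $H_g$. Every ergodic $\mu$ of maximal entropy for $g$ projects to $(H_g)_*\mu$, with $h_{(H_g)_*\mu}(\Phi_g)\le h_\mu(g)$. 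Because $\Phi_g$ is expansive, it also has measures of maximal entropy, and by the variational principle some measure of maximal entropy $\nu$ for $\Phi_g$ with $h_\nu(\Phi_g)>h_{\mathrm{top}}(f)$ lifts to a $g$-invariant measure of the same entropy, hence to a $g$-measure of maximal entropy. So we may fix an ergodic hyperbolic $g$-measure $\mu$ of maximal entropy with $h_\mu(g)=h_{(H_g)_*\mu}(\Phi_g)>h_{\mathrm{top}}(f)$.

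The second step shows that $\mu$ gives full measure to points whose $\sim_g$-class is a single point. An ergodic extension with equal entropy need not be a.e. injective in general. Here, however, we would use the method of \cite{CHHU2026}. Via Katok horseshoes or the Sarig coding, $\mu$ is approximated by horseshoes $\Lambda_k$ with entropy tending to $h_\mu(g)$. In those horseshoes, stable and unstable manifolds intersect transversally and homoclinic classes are dense in $\operatorname{supp}\mu$. Since $h_\mu(g)>\log\lambda$, the measure $\mu$ is not carried by $J_g=H_g^{-1}(L(f))$: the restriction $g|_{J_g}$ factors onto $f$, and one must show that this factor, together with equal entropy, forces entropy at most $\log\lambda$. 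This uses Handel's characterization and the fact that fibers over $L(f)$ are of zero entropy. Consequently $\mu$-a.e. point maps to $L(g)\setminus L(f)$. Periodic points of the horseshoe $\Lambda_k$ near typical points then project to periodic points of $\Phi_g$ outside $L(f)$. By \cite{HHU2004}, these are endpoints of both the local stable and the local unstable dendrites.

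The third step, which we expect to be the main obstacle, is to derive a contradiction from the endpoint geometry. A transverse homoclinic intersection between the stable and unstable manifolds of a hyperbolic periodic point $p$ of $\Lambda_k$ has the following effect. Its image under $H_g$ lies in $W^s_{\mathrm{loc}}(H_g p)\cap W^u_{\mathrm{loc}}(H_g p)$ inside the product structure of $Z$. The collapsing map $H_g$ is monotone along stable and unstable arcs, because it is the fiber map of $\sim_g$, so the germ of $W^u(p)$ on both sides of $p$ is either collapsed or mapped to branches at $H_g(p)$. Positive entropy near $p$ requires homoclinic points accumulating on both sides of $p$ along $W^u(p)$. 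If the branches on both sides survived, $H_g(p)$ would not be an endpoint of the unstable dendrite. Therefore one side must be collapsed, i.e.\ a whole half-arc of $W^u(p)$ lies in a single $\sim_g$-class. Iterating backwards, this collapses an unstable set carrying conditional measure of positive weight. That contradicts the fact that $\mu$ has positive unstable entropy while $H_g$ preserves entropy: entropy along unstable leaves of $\mu$ (Ledrappier--Young) would be lost in the factor. To make this rigorous we would replace the smooth structure on $L(g)$, which is not available, by the dendrite structure. We would show that the Ledrappier--Young unstable partition of $\mu$ maps to a partition subordinate to the unstable dendrites of $\Phi_g$, and that collapsing half-arcs strictly lowers the conditional entropy. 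Working this out is the part that requires care and where the $C^\infty$ hypothesis (via Buzzi--Crovisier--Sarig and Yomdin's bounds on entropy defects) is used. The resulting contradiction gives $h_{\mathrm{top}}(\Phi_g)<h_{\mathrm{top}}(g)$.
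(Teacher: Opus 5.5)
\textbf{There is a genuine gap: your Step 3 rests on a monotonicity claim that fails.}

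Your opening matches the paper's. You lift an ergodic measure of maximal entropy of $\Phi_g$ to a hyperbolic measure of maximal entropy $\mu$ of $g$, and you show $\mu(J_g)=0$. For the latter, ergodicity suffices: $\nu(L(f))\in\{0,1\}$, and $\nu(L(f))=1$ would give $h_\nu(\Phi_g)\le h_{\mathrm{top}}(f)$. Your claim that fibers over $L(f)$ carry zero entropy is false in general (a horseshoe created in a small disk around a fixed point of $f$ lies in a single fiber), and it is not needed.

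The failure is in your third step. You assert that $H_g$ is monotone along unstable arcs ``because it is the fiber map of $\sim_g$''. You then conclude that two non-collapsed branches of $W^u(p)$ would give two distinct branches at $H_g(p)$, contradicting the endpoint property. Nothing forces a $\sim_g$-class to meet an unstable arc in a connected set. On $\widetilde W^u(\widetilde p;G)$ the $u$-coordinate of $\widetilde H_g$ is constant, so there $\widetilde H_g$ is just the continuous tree-valued map $(\widetilde H_g)_s$, and such a map can fold. The endpoint property of $e=(\widetilde H_g)_s(\widetilde p)$ only forces the images of the two branches to share an initial arc at $e$. That gives points $a<\widetilde p<b$ with the same image (Claim~\ref{claim:4}), not a collapse.

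Your deduction also never uses $h_{\mathrm{top}}(\Phi_g)=h_{\mathrm{top}}(g)$, and it cannot hold without it. If $h_{\mathrm{top}}(\Phi_g)>h_{\mathrm{top}}(f)$, take the horseshoes of Claim~\ref{claim:continuity-horseshoe} with $a>h_{\mathrm{top}}(f)$. They cannot lie in $J_g$, and $H_g$ is injective on them. So a periodic point of such a horseshoe outside $J_g$, accumulated by the horseshoe on both sides of its local unstable manifold, has two non-collapsed branches. Those branches must fold. Relatedly, homoclinic points need not accumulate on both sides of $p$ (think of the corner fixed point of Smale's horseshoe). The paper has to construct $q$ so that both one-sided unstable neighbourhoods carry positive leaf measure (Claim~\ref{claim:periodic.saddle.related.to.p}).

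\textbf{What is missing is a way to turn a fold into a contradiction.} This is where the measure theory is needed, and the paper does it in these steps:
\begin{enumerate}
\item Vanishing relative entropy (Abramov--Rokhlin) gives injectivity of $\widetilde H_g$ on a conull set along every Pesin stable and unstable manifold (Section~\ref{subsec:prelim-conditionals}).
\item Recurrence plus the $\lambda^{-1}$-scaling of radii shows that stable half-leaves at typical points have unbounded images.
\item Stable holonomy and the inclination lemma transfer injectivity of $(\widetilde H_g)_s$ to a full $m^u_q$-measure set $Y_q\subset\widetilde W^u(\widetilde q;G)$ (Claim~\ref{claim:Yq}).
\item The folded unstable arc and a nonsingular stable leaf $\ell=q_s^{-1}(v)$ bound a Jordan disk. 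Unbounded stable half-leaves entering it through a positive-measure interval near $\widetilde q$ must exit through the unstable arc, which produces second crossings (Claim~\ref{claim:stable.unstable.loop}).
\item The Sard argument makes these crossings transverse on a positive-measure set. This, not Buzzi--Crovisier--Sarig or Yomdin, is where $C^\infty$ enters.
\item Ben Ovadia's absolute continuity puts both ends of the resulting stable holonomy in $Y_q$, contradicting injectivity.
\end{enumerate}
Your Ledrappier--Young mechanism (collapsed half-arcs lowering unstable conditional entropy) addresses only the collapse scenario, which is not what actually occurs. As written, the argument does not close.
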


Thus, whenever the quotient entropy exceeds the pseudo-Anosov minimum, it is strictly smaller than
the entropy of the representative. Together with \Cref{thm:entropy.formula} and the basic entropy bounds, this shows that Nielsen growth captures the full entropy of a smooth representative precisely when that representative minimizes entropy in its isotopy class. We record this consequence as
\Cref{cor:entropy.minimization} in Section~\ref{sec:entropy.formula}.

Our third theorem concerns the dependence of quotient entropy on the representative.

\begin{theorem}[Continuity of quotient entropy]\label{thm:quotient-entropy-continuity}
On the space of homeomorphisms isotopic to $f$, the map
\[
  g\longmapsto h_{\mathrm{top}}(\Phi_g)
\]
is upper semicontinuous in the $C^0$ topology. Moreover, let $g\simeq f$ be a $C^{1+\alpha}$ diffeomorphism, with $\alpha>0$. For every sequence of homeomorphisms $g_j\simeq f$ converging uniformly to $g$, one has
\[
  h_{\mathrm{top}}(\Phi_{g_j}) \longrightarrow h_{\mathrm{top}}(\Phi_g).
\]
No differentiability is required of the approximating homeomorphisms $g_j$.
\end{theorem}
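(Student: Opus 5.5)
Two parts: upper semicontinuity for all homeomorphisms, and continuity (lower semicontinuity) at $C^{1+\alpha}$ points.

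\emph{Upper semicontinuity.} Since $\Phi$ is fixed on the compact space $Z$, the map $g\mapsto h_{\mathrm{top}}(\Phi_g)$ is the composition of $g\mapsto L(g)$ with $K\mapsto h_{\mathrm{top}}(\Phi|_K)$. I would prove:
\begin{enumerate}
\item[(a)] $g\mapsto L(g)$ is upper semicontinuous in the Hausdorff sense.
\item[(b)] Entropy of restrictions of the expansive map $\Phi$ is upper semicontinuous on closed invariant sets.
\end{enumerate}

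For (a), recall that $H_g$ is characterized by shadowing. $\widetilde H_g(\widetilde x)$ is the unique point of $\mathcal T$ whose $\widetilde\Phi$-orbit stays at bounded distance from the $G$-orbit of $\widetilde x$. When $g_j\to g$ uniformly, the lifts $G_j\to G$ uniformly, so the displacement $\sup D(G_j,F)$ is uniformly bounded. The hyperbolic product structure of $\widetilde\Phi$ (expansion by $\lambda$ on $\widehat{\mathcal T}^s$, contraction on $\widehat{\mathcal T}^u$) gives explicit formulas for the shadowing point: it is a convergent series/limit such as $\lim_n \widetilde\Phi^{-n}G^n$ in the expanding coordinate, and similarly in the other coordinate. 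Any limit point of $\widetilde H_{g_j}(\widetilde x_j)$ with $\widetilde x_j\to\widetilde x$ then shadows the $G$-orbit of $\widetilde x$ with a bounded constant, so it equals $\widetilde H_g(\widetilde x)$. In fact $H_{g_j}\to H_g$ uniformly, which gives $\limsup_j L(g_j)\subset L(g)$.

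For (b), $\Phi$ is expansive on $Z$ with some constant $\epsilon_0$. Hence for any closed invariant $K$, $h_{\mathrm{top}}(\Phi|_K)=\lim_n\frac1n\log s_n(\epsilon_0/2,K)$. The separated-set counts are upper semicontinuous under Hausdorff convergence when computed at a fixed scale using open neighborhoods: $K_j$ eventually lies in a $\delta$-neighborhood of $L(g)$. Standard arguments then give $\limsup_j h(\Phi|_{L(g_j)})\le h(\Phi|_{L(g)})$. Equivalently, $\mu\mapsto h_\mu(\Phi)$ is upper semicontinuous by expansiveness, and limits of $\Phi$-invariant measures on $L(g_j)$ are supported in $L(g)$. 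I would use the variational principle version, which is cleaner.

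\emph{Lower semicontinuity at $C^{1+\alpha}$ points.} By \Cref{thm:entropy.formula}, $h_{\mathrm{top}}(\Phi_g)$ is the growth rate of fixed points of $\Phi_g^n$, equivalently of nonempty Nielsen classes of $g^n$. The plan is to make these classes persist robustly. If $h_{\mathrm{top}}(\Phi_g)=h_{\mathrm{top}}(f)$ there is nothing to do, since $L(f)\subset L(g_j)$ always. Otherwise, I would use Katok's horseshoe theorem on a hyperbolic measure $\mu$ for $g$. This measure should have $h_\mu(g)$ close to $h_{\mathrm{top}}(\Phi_g)$ and its pushforward $(H_g)_*\mu$ should nearly realize $h_{\mathrm{top}}(\Phi_g)$.

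This produces a horseshoe $\Lambda$ for $g^m$ on which $H_g$ is finite-to-one or nearly injective on periodic points. Concretely, the number of distinct Nielsen classes of $g^{mn}$ represented in $\Lambda$ grows at rate at least $h_{\mathrm{top}}(\Phi_g)-\epsilon$. I expect to get this from the proof of \Cref{thm:entropy.formula}, which presumably builds exactly such horseshoes.

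The key step is the persistence of Nielsen classes under $C^0$ perturbation. A Nielsen class of $g^{n}$ containing a hyperbolic periodic point is isolated and has fixed point index $\pm1$. To make the argument uniform in $n$, I would instead use the horseshoe structure through a topological Markov partition of $\Lambda$ by disks (rectangles). These have the Conley-type property that $g^m$ maps each rectangle across others in the hyperbolic pattern, and this crossing pattern is $C^0$-robust. For $g_j$ $C^0$-close, the same rectangle crossing pattern holds, so for each admissible periodic itinerary there is a fixed point of $g_j^{mn}$, by a Brouwer/index argument in the stack of rectangles.

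It remains to show these fixed points lie in distinct Nielsen classes for $g_j$. Points with different itineraries are $H_g$-distinct. Their lifted $G_j$-orbits stay close to the corresponding $G$-orbits, by uniform closeness plus the rectangle confinement. By the uniform convergence $H_{g_j}\to H_g$ from part (a), their images under $H_{g_j}$ are close to distinct points. Distinctness then needs a separation bound independent of $n$, which I would obtain from the expansivity of $\Phi$: distinct itineraries give $\Phi$-orbits that are $\epsilon_0$-separated at some time. This yields $\#\operatorname{Fix}(\Phi_{g_j}^{mn})\ge e^{mn(h-2\epsilon)}$, and hence $h_{\mathrm{top}}(\Phi_{g_j})\ge h-2\epsilon$ for large $j$. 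The main obstacle is making this separation and existence argument uniform over all $n$ with only $C^0$ control.
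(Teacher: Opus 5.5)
Your upper semicontinuity argument is sound once you work on the compact invariant set $K=L(g)\cup\bigcup_j L(g_j)$. Note that $Z$ itself is not compact, since the trees are not locally compact. Your variational-principle version is a legitimate alternative to the paper's count of $N(\mathcal U^{(m)}|_{L(g_j)})$.

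The gap is in the lower semicontinuity step, where the horseshoe is constructed.
\begin{itemize}
\item Katok's horseshoe theorem, applied to a lift $\mu$, builds its branches from returns that are separated in $M$, and its entropy approximates $h_\mu(g)$. It says nothing about how $H_g$ acts on the horseshoe. Many branches may be identified in the quotient, so you get no lower bound on the entropy of $H_g(\Lambda)$, or on the number of distinct $H_g$-images of its periodic points.
\item Asking for $h_\mu(g)$ close to $h_{\mathrm{top}}(\Phi_g)$ does not repair this. You have no control over the relative entropy $h_\mu(g\mid\mathcal F_g)$ of an ergodic lift. Even when it vanishes, a black-box horseshoe is not known to be $H_g$-injective.
\item For the same reason, your appeal to expansivity (``distinct itineraries give $\Phi$-orbits that are $\epsilon_0$-separated'') is circular. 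Expansivity only separates points already known to be distinct in $L(g)$.
\end{itemize}
A minor point: a Nielsen class containing a hyperbolic fixed point need not have index $\pm1$, since it may contain other fixed points. You drop that line anyway.

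The missing idea is to choose the branches by their separation in the quotient. Take an ergodic $\nu$ with $h_\nu(\Phi_g)>a+2\epsilon$ and an ergodic hyperbolic lift $\mu$. Brin--Katok for $\nu$ gives $r>0$ and $A\subset L(g)$ with $\nu(B_n^{\Phi_g}(y,r))\le e^{-n(h_\nu(\Phi_g)-\epsilon)}$ for $y\in A$ and all large $n$.

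Choose $\eta$ so that points at $d_M$-distance less than $\eta$ have $H_g$-images within $r/3$. Then choose a Katok--Mendoza rectangle and a set $B\subset H_g^{-1}(A)$ with $\mu(B)>0$. At recurrence times with $\mu(B\cap g^{-n}B)\ge\mu(B)^2/2$, a maximal $(n,r)$-separated subset of $H_g(B\cap g^{-n}B)$ has at least $\frac{\mu(B)^2}{2}e^{n(h_\nu(\Phi_g)-\epsilon)}$ points. This is because its Bowen balls cover a set of $\nu$-measure at least $\mu(B)^2/2$.

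Build the horseshoe from the $\eta$-thin return strips of exactly these points. Then $H_g|_\Lambda$ is injective, since points with different itineraries have images at Bowen distance at least $r/3$ in the first block where the itineraries differ. It also gives $\frac1n h_{\mathrm{top}}(g^n|_\Lambda)>a$.

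This selection also removes the uniformity obstacle you flag. The separation $r/3$ and the one-step crossing margins for $g_j^n$ do not depend on the number of blocks.

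Moreover, your perturbed periodic point $z_w$ with itinerary $w$ satisfies $H_{g_j}(z_w)=H_g(y_w)$ exactly, where $y_w\in\Lambda$ has the same itinerary. The reason is that the lifted $G_j$-orbit of $z_w$ and the $G$-orbit of $y_w$ stay at bounded distance, and $\widetilde\Phi$ has no two distinct orbits at bounded distance.

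With that repair, your covering-relation argument is a valid substitute for the paper's route. The paper instead obtains $H_{g_j}|_{\Lambda_j}=H_g\circ r_j$ from Nitecki semistability and expansive uniqueness. It then concludes $H_g(\Lambda)\subset L(g_j)$ directly, without counting periodic points.
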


Quotient entropy therefore refines the pseudo-Anosov entropy bound while remaining stable, at these representatives, under perturbations that impose
no control on derivatives. This answers the continuity part of \cite[Question~4.1]{HHU2004} at $C^{1+\alpha}$ diffeomorphisms. Together with \Cref{thm:entropy.formula}, this implies that if $g_j$ and $g$ are $C^{1+\alpha}$ diffeomorphisms isotopic to $f$ and $g_j\to g$ uniformly, then
$h_N(g_j)\to h_N(g)$.

The invariant foliations of $f$ also connect these entropy questions with Teichm\"uller theory. They are the horizontal
and vertical foliations of a quadratic differential and determine a geodesic in Teichm\"uller space, the space of
marked conformal structures on $M$. The mapping class of $f$ translates along this geodesic by $\log\lambda$
\cite{Bers1978}. The minimum entropy in the isotopy class thus has a geometric interpretation: it is the translation
length of the class on Teichm\"uller space. The projective classes of the two foliations are its two fixed points on
Thurston's boundary.

These same foliations give the dual trees used in this paper. The translation lengths of the deck transformations on each
tree record intersection numbers of closed curves with the corresponding measured foliation. Such tree actions belong
to the geometric picture developed by Morgan and Shalen \cite{MorganShalen1984}, in which real trees describe degenerations of hyperbolic structures. Fathi's construction \cite{Fathi1990} brings the two trees together in the hyperbolic system $(Z,\Phi)$, and the sets $L(g)$ of
\cite{HHU2004} realize the global orbit quotients of representatives throughout the isotopy class inside this
common space. The geometry supplied by $f$ therefore also allows us to study the additional complexity of $g$.
Our results compare the entropy retained by this quotient with both the geometric minimum $\log\lambda$ and the
full entropy of the representative.

We note that in a recent preprint of Garc{\'i}a-Sassi and Tal \cite{GarciaSassiTal2026}, the authors consider a related quotient construction for maps in the isotopy class of a pseudo-Anosov, by collapsing maximal continua with bounded infinite orbit in the universal cover. The resulting quotient map sits above Fathi's construction, and is CW-expansive. It seems very likely that some methods in this paper can be applied in that situation; investigating such extent remains as an interesting open problem.

The proofs proceed as follows. Section~\ref{sec:hhu-preliminaries} recalls the dual trees, Fathi's extension, and the realization
of the global orbit quotient. Section~\ref{sec:hyperbolic-preliminaries} collects the preliminaries on hyperbolic measures, homoclinic
classes, conditional measures, and horseshoes.

Section~\ref{sec:entropy.formula} proves \Cref{thm:entropy.formula,thm:strict-entropy-gap} and derives Corollary \ref{cor:entropy.minimization}.
For the entropy formula, the correspondence between Nielsen classes and images of periodic points, together with expansivity, gives one inequality. For the reverse inequality, we lift positive-entropy quotient measures to hyperbolic measures on the surface. The Brin--Katok formula \cite{BK1983}
supplies many return segments whose images in the quotient are separated; Katok's closing lemma \cite{Katok1980} turns these segments into periodic
points belonging to distinct Nielsen classes.

The proof of the strict gap develops the geometric argument of \cite{CHHU2026}. Under entropy equality above $h_{\mathrm{top}}(f)$, a maximal-entropy measure of the quotient lifts to a hyperbolic maximal-entropy measure with zero relative entropy over $H_g$.
The conditional-measure argument recalled in Section~\ref{subsec:prelim-conditionals} then gives injectivity on full conditional-measure subsets of stable and unstable manifolds. Homoclinic periodic approximation provides a saddle projecting outside $L(f)$, where the endpoint
geometry forces a fold in the tree image of its unstable manifold. The HHTU Sard argument \cite{HHTU2011SRB}, in its adaptation to
maximal-entropy measures \cite{BCS2022}, supplies transverse stable crossings of the folded arc. Ben Ovadia's absolute continuity result
\cite{BO2023} allows these crossings to be chosen in the set of injectivity, giving a contradiction. The $C^\infty$ hypothesis enters through the Sard
argument.

Section~\ref{sec:quotient-entropy-continuity} proves \Cref{thm:quotient-entropy-continuity}. Upper semicontinuity follows from the Hausdorff
continuity of $L(g)$ inside the common expansive ambient system. For lower semicontinuity, we incorporate returns separated in the quotient into
the Katok--Mendoza construction, obtaining locally maximal horseshoes on which $H_g$ is injective. Nitecki's $C^0$ semistability theorem
\cite{Nitecki1971}, together with expansivity, ensures that their images in the quotient persist under nearby homeomorphisms.

Section~\ref{sec:questions} presents the remaining questions.

For convenience of the reader we reproduce the  unpublished article \cite{HHU2004} in the Appendix.

\section{Dual trees and the universal hyperbolic extension}
\label{sec:hhu-preliminaries}

Let $M$ be a closed connected orientable surface of genus at least two, and let $f\colon M\to M$ be a pseudo-Anosov homeomorphism with dilatation
$\lambda>1$. We denote by $\cF^s, \cF^u$ its stable and unstable measured foliations, respectively. Fix a universal covering $p\colon\widetilde M\to M$, with deck group $\Gamma$, and a lift $F\colon\widetilde M\to\widetilde M$ of $f$.

We recall Fathi's hyperbolic extension~\cite{Fathi1990} and the realization of the orbit quotients in~\cite{HHU2004}.

\subsection{The dual trees}
\label{subsec:hhu-dual-trees}
For $\sigma\in\{s,u\}$, let
\[
  \cT^\sigma=\widetilde M/\widetilde{\cF}^{\sigma},
  \qquad q_\sigma\colon\widetilde M\to\cT^\sigma,
\]
be the space of leaves of the lifted foliation. A singular leaf includes the singularity and all its separatrices. The transverse measure defines
a metric
\[
  d_\sigma(\ell,\ell')
  =\inf\{\text{transverse measure of }\alpha:
             \alpha\text{ joins }\ell\text{ to }\ell'\}.
\]
These leaf spaces are $\mathbb R$-trees, as are their metric completions $\widehat{\cT}^{\sigma}$; see~\cite[\S2]{MorganShalen1984}
and~\cite[\S1]{Fathi1990}. The trees $\cT^\sigma$ are not complete, nor they are locally compact. Every point of $\widehat{\cT}^{\sigma}\setminus\cT^\sigma$ is an endpoint of the completed tree~\cite[Lemma~1.1]{Fathi1990}.

The deck transformations act isometrically on both trees and their completions. The maps induced by $F$ extend to homeomorphisms $F_\sigma\colon\widehat{\cT}^{\sigma}\to\widehat{\cT}^{\sigma}$ satisfying 
\begin{equation}
\label{eq:hhu-tree-scaling}
\begin{aligned}
 d_s(F_s a,F_s b)&=\lambda\,d_s(a,b),\\
 d_u(F_u a,F_u b)&=\lambda^{-1}d_u(a,b).
\end{aligned}
\end{equation}
Here the superscript on $\cT^\sigma$ records the foliation being collapsed, as in~\cite{HHU2004}: the stable-leaf tree carries the expanding coordinate,
and the unstable-leaf tree carries the contracting coordinate. We refer to these spaces as the \emph{dual trees of the invariant foliations}, and to the construction below as \emph{Fathi's hyperbolic extension}.

\subsection{The product and the ambient map}
\label{subsec:hhu-ambient-map}

Let
\[
 \cT=\widehat{\cT}^{s}\times\widehat{\cT}^{u}
\]
equipped with the metric
\[
 \widetilde d(z,w)=\max\{d_s(z_s,w_s),d_u(z_u,w_u)\}. 
\]

Then $\cT$ is the complete product of the two trees, denoted $\widetilde{\mathcal Z}$ in~\cite{HHU2004}. Define $\widetilde\Phi\colon\cT\to\cT$,
\[
\widetilde\Phi(z_s,z_u)=(F_s z_s,F_u z_u).
\]
The metric splitting is global: for all $n\in\mathbb Z$,
\begin{equation}\label{eq:hhu-global-splitting}
 \widetilde d(\widetilde\Phi^n z,\widetilde\Phi^n w)
 =\max\{\lambda^n d_s(z_s,w_s),
             \lambda^{-n}d_u(z_u,w_u)\}.
\end{equation}
Hence we get (cf. \cite[Theorem~1.1]{HHU2004}).

\begin{lemma}\label{lem:propertiesofwidetildephi}
  The following properties hold:
  \begin{enumerate}
    \item for every $z\in \cT$, 
    \[
    W^s(z;\widetilde\Phi)=\{z_s\}\times\widehat{\cT}^{u},
    \qquad
    W^u(z;\widetilde\Phi)=\widehat{\cT}^{s}\times\{z_u\},
    \]
   \item $\widetilde\Phi$ is expansive.

   \item $\widetilde\Phi$ has the shadowing property.
  \end{enumerate}
\end{lemma}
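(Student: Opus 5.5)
All three properties follow from the global splitting formula \eqref{eq:hhu-global-splitting} and the product structure of $\cT$. The one place where the trees themselves matter is the shadowing property, which uses the completeness of $\widehat{\cT}^s$ and $\widehat{\cT}^u$.

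\emph{Stable and unstable sets.} By \eqref{eq:hhu-global-splitting}, $\widetilde d(\widetilde\Phi^n z,\widetilde\Phi^n w)\to0$ as $n\to+\infty$ if and only if $\lambda^n d_s(z_s,w_s)\to0$. Since $\lambda>1$, this forces $d_s(z_s,w_s)=0$. The $u$-term $\lambda^{-n}d_u(z_u,w_u)$ tends to $0$ for every $w_u\in\widehat{\cT}^u$, because $d_u(z_u,w_u)$ is finite. Hence $W^s(z;\widetilde\Phi)=\{z_s\}\times\widehat{\cT}^u$. The same argument as $n\to-\infty$ gives $W^u(z;\widetilde\Phi)=\widehat{\cT}^s\times\{z_u\}$.

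\emph{Expansivity.} Suppose $\widetilde d(\widetilde\Phi^n z,\widetilde\Phi^n w)\le c$ for all $n\in\mathbb Z$, for some constant $c$. Then $\lambda^{|n|}$ times one of $d_s(z_s,w_s)$, $d_u(z_u,w_u)$ is at most $c$ for every $n$, depending on the sign of $n$. Letting $n\to\pm\infty$ shows both distances vanish, so $z=w$. In fact every positive constant is an expansivity constant, and even bounded orbits (not only close ones) coincide.

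\emph{Shadowing.} I would treat the two coordinates separately, since $\widetilde\Phi=F_s\times F_u$ and $\widetilde d$ is the max metric. A $\delta$-pseudo-orbit $(z^n)_{n\in\mathbb Z}$ for $\widetilde\Phi$ gives a $\delta$-pseudo-orbit $(z^n_u)$ for the contraction $F_u$, and a $\delta$-pseudo-orbit $(z^n_s)$ for the expansion $F_s$, which is a $\lambda^{-1}\delta$-pseudo-orbit for the contraction $F_s^{-1}$ run in reverse time.

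For a $\lambda^{-1}$-contraction $F_u$ of the complete space $\widehat{\cT}^u$, define the shadowing point as a limit:
\[
y_u^n=\lim_{k\to\infty}F_u^{k}(z_u^{n-k}).
\]
The sequence is Cauchy, with consecutive terms differing by at most $\lambda^{-k}\delta$. Completeness gives the limit, and $d_u(y_u^n,z_u^n)\le \delta\sum_{k\ge0}\lambda^{-k}=\delta\lambda/(\lambda-1)$. Continuity of $F_u$ gives $F_u(y^n_u)=y^{n+1}_u$.

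Symmetrically, set $y^n_s=\lim_k F_s^{-k}(z_s^{n+k})$, with the analogous bound. Then $y^0=(y^0_s,y^0_u)$ has orbit $\widetilde\Phi^n y^0=y^n$ that $\varepsilon$-shadows $(z^n)$ with $\varepsilon=\delta\lambda/(\lambda-1)$. Uniqueness of the shadowing point follows from expansivity.

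The only delicate point is the existence of these limits. This is exactly where the metric completions $\widehat{\cT}^\sigma$ are needed rather than $\cT^\sigma$, and where the scaling relations \eqref{eq:hhu-tree-scaling} must hold on the completions, which the paper asserts. No local compactness is needed.
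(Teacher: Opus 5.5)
Your proposal is correct. Parts (1) and (2) are the paper's argument: both read the stable and unstable sets and expansivity directly off the global splitting \eqref{eq:hhu-global-splitting}.

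For shadowing you take a different route. The paper invokes Bowen's proof to obtain, for each $M$, a point whose orbit $\epsilon$-shadows the finite segment $(z_n)_{|n|\le M}$. It then uses the quantitative expansivity estimate $\sup_{|n|\le N}\widetilde d(\widetilde\Phi^n z_1,\widetilde\Phi^n z_2)<\epsilon\Rightarrow\widetilde d(z_1,z_2)<\epsilon\lambda^{-N}$ to show that these finite-time shadowing points form a Cauchy sequence. Its limit in the complete space $\cT$ shadows the whole pseudo-orbit.

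You instead exploit that $\widetilde\Phi=F_s\times F_u$ is literally a product of an expanding and a contracting similarity, and build the shadowing orbit coordinatewise as $y^n_u=\lim_k F_u^k(z^{n-k}_u)$ and $y^n_s=\lim_k F_s^{-k}(z^{n+k}_s)$. Your estimates check out: $d_u(F_u^{k+1}z_u^{n-k-1},F_u^{k}z_u^{n-k})\le\lambda^{-k}\delta$, its $s$-analogue (which even gives the sharper bound $\delta/(\lambda-1)$), and the equivariance $F_\sigma(y^n_\sigma)=y^{n+1}_\sigma$.

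Both arguments use completeness of the trees at the same essential point. Your version is self-contained, since it does not need to check that Bowen's finite-time shadowing goes through on a non-locally-compact space. It also gives an explicit Lipschitz shadowing constant ($\delta$-pseudo-orbits are $\delta\lambda/(\lambda-1)$-shadowed), and uniqueness then follows from expansivity. The paper's argument is less tied to the global product structure. It would apply to any hyperbolic homeomorphism of a complete space that has finite-time shadowing and the uniform expansivity estimate.
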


\begin{proof}
  The first part is direct from \eqref{eq:hhu-global-splitting}. It implies, in particular,
\[
 W^s(z;\widetilde\Phi)\cap W^u(w;\widetilde\Phi)=\{(z_s,w_u)\}.
\]
so any two points whose $\widetilde\Phi$-iterates remain at bounded distance for all integer times, necessary coincide. So, $\widetilde\Phi$ is expansive. In fact, \eqref{eq:hhu-global-splitting} implies the stronger following condition: given $\epsilon>0$, if $N\in\mathbb N$ then 

\[
\sup_{n\in [-N,N]}\widetilde d\left(\widetilde\Phi^nz_1,\widetilde\Phi^nz_2\right)<\epsilon\Rightarrow \widetilde d(z_1,z_2)<\frac{\epsilon}{\lambda^N}.
\]
Fix $\epsilon>0$. Bowen's proof of shadowing \cite{EquSta} gives $\delta>0$ so that, if $(z_n)_{n=-\infty}^{n=\infty}$ is a $\delta$-pseudo orbit of $\widetilde\Phi$,  for every $M\in\mathbb N$ there exists $z^{(M)}\in \cT$ whose $[-M,M]$ orbit $\epsilon$-shadows $(z_n)_{n=-M}^{n=M}$. It follows that for $M'>M$, $\widetilde d(z^{(M)},z^{(M')})\leq \frac{2\epsilon}{\lambda^M}$, hence $(z^{(M)})_{M\geq 1}$ is Cauchy. Since $\cT$ is complete, it has a limit $z\in\cT$. Clearly, the orbit of $z$ shadows $(z_n)_{n=-\infty}^{n=\infty}$ at distance at most $2\epsilon$.
\end{proof}

Let $\varphi\in\operatorname{Aut}(\Gamma)$ be determined by $F\gamma=\varphi(\gamma)F$. The diagonal action of $\Gamma$ on $\cT$ is isometric and properly discontinuous~\cite[Lemma~2.1 and Corollary~2.2]{Fathi1990}. We consider the orbit space
\[
 \cZ=\cT/\Gamma,
 \qquad P\colon\cT\to\cZ
\]
equipped with the metric
\[
 d(Pz,Pw)=\inf_{\gamma\in\Gamma}\widetilde d(z,\gamma w).
\]
$P$ is the universal covering of $\cZ$, and $\cZ$ is complete. Since $\widetilde\Phi\gamma=\varphi(\gamma)\widetilde\Phi$, the product map
induces a homeomorphism
\[
 \Phi\colon\cZ\to\cZ, \qquad \Phi(Pz)=P(\widetilde\Phi z).
\]

\begin{lemma}\label{lem:propertiesofphi}
$\Phi$ has hyperbolic local product coordinates. Moreover, 
\begin{enumerate}
  \item it is expansive,
  \item it has the shadowing property. 
\end{enumerate}
\end{lemma}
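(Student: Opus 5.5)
The plan is to transfer the properties of $\widetilde\Phi$ in Lemma~\ref{lem:propertiesofwidetildephi} to $\Phi$ through the covering $P$. The one quantitative input needed is a uniform injectivity radius for $P$. Since the diagonal action of $\Gamma$ on $\cT$ is isometric and properly discontinuous, and $\cZ=\cT/\Gamma$ is the quotient of a cocompact-type action (the image contains the compact $L(f)$, and the action is free), we aim for $r_0>0$ such that
\[
\widetilde d(z,\gamma z)\ge 4r_0 \quad\text{for all } z\in\cT,\ \gamma\neq \mathrm{id}.
\]
To get this, use the translation lengths of $\gamma$ on the two trees. Every nontrivial $\gamma$ is hyperbolic on at least one of $\widehat\cT^s,\widehat\cT^u$, because the foliations fill. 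Its displacement on that tree is at least its translation length, which is the intersection number of the closed curve with the corresponding measured foliation. These intersection numbers form a discrete set bounded away from $0$ (Fathi \cite[\S2]{Fathi1990}). Hence $\max\{d_s(z_s,\gamma z_s),d_u(z_u,\gamma z_u)\}$ is bounded below uniformly. I expect this uniform lower bound to be the main obstacle, since properness alone does not give uniformity when $\cZ$ is not compact. If the intersection-number argument is delicate, the fallback is to prove the bound only on a neighbourhood of the relevant compact sets and work locally there.

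Granting $r_0$, the map $P$ restricted to any ball $B(z,r_0)$ is an isometry onto $B(Pz,r_0)$. The local product coordinates are then obtained by pushing the global product structure down. For $d(x,y)<r_0$, choose lifts $\widetilde x,\widetilde y$ with $\widetilde d(\widetilde x,\widetilde y)=d(x,y)$ and set
\[
[x,y]=P(\widetilde x_s,\widetilde y_u).
\]
This bracket is independent of the choice of lifts by uniqueness in the injectivity ball. The local stable and unstable sets of $\Phi$ are the $P$-images of the slices $\{z_s\}\times\widehat\cT^u$ and $\widehat\cT^s\times\{z_u\}$ intersected with $B(z,r_0)$. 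Equation~\eqref{eq:hhu-global-splitting} gives exact contraction and expansion by $\lambda^{\mp n}$ as long as the orbits stay within $r_0$. The equivariance $\widetilde\Phi\gamma=\varphi(\gamma)\widetilde\Phi$ makes all of this compatible with $\Phi$.

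For expansivity, take $c=r_0/\lambda$ and suppose $d(\Phi^nx,\Phi^ny)<c$ for all $n$. Fix lifts $\widetilde x,\widetilde y$ realizing $d(x,y)$. Inductively on $n\ge0$ and $n\le0$, the lifted orbits $\widetilde\Phi^n\widetilde x,\widetilde\Phi^n\widetilde y$ remain $c$-close. To see this, note that $\widetilde\Phi$ is $\lambda$-Lipschitz, so the next iterates are at distance $<r_0$. The unique deck translate realizing $d(\Phi^{n+1}x,\Phi^{n+1}y)$ within that ball must then be the identity. The lifted orbits therefore stay at bounded distance, and Lemma~\ref{lem:propertiesofwidetildephi}(2) gives $\widetilde x=\widetilde y$.

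Shadowing follows the same pattern. Let $(x_n)$ be a $\delta$-pseudo-orbit of $\Phi$ with $\delta<r_0/\lambda$. Lift it step by step, choosing $\widetilde x_{n+1}$ to be the lift of $x_{n+1}$ closest to $\widetilde\Phi\widetilde x_n$, in both time directions. This produces a $\delta$-pseudo-orbit of $\widetilde\Phi$. Lemma~\ref{lem:propertiesofwidetildephi}(3) provides a $2\epsilon$-shadowing point $z$, and $Pz$ then $2\epsilon$-shadows $(x_n)$ because $P$ is $1$-Lipschitz.
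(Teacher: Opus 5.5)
The overall route is the paper's, but there is a gap in the step you flagged yourself: the uniform bound $\widetilde d(z,\gamma z)\ge 4r_0$. You justify it by saying that translation lengths on a single tree (the intersection numbers with one measured foliation) form a discrete set bounded away from $0$. That is false.

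Write $\ell_\sigma(\gamma)$ for the translation length on $\widehat{\cT}^\sigma$. The relation $F_s\gamma=\varphi(\gamma)F_s$ together with \eqref{eq:hhu-tree-scaling} gives $\ell_s(\varphi(\gamma))=\lambda\,\ell_s(\gamma)$. Hence $\ell_s(\varphi^{-n}(\gamma))=\lambda^{-n}\ell_s(\gamma)\to0$, and symmetrically $\ell_u(\varphi^{n}(\gamma))\to0$. So neither coordinate of the displacement has a uniform lower bound, and ``pick a tree on which $\gamma$ is hyperbolic and use its translation length'' cannot work. (Incidentally, every nontrivial $\gamma$ is hyperbolic on both trees, since both foliations are arational.) Your fallback of working only near compact sets does not rescue the lemma either. $\cZ$ is not locally compact, and the statement asks for product coordinates and an expansivity constant that are uniform on all of $\cZ$.

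The missing idea is to use the two trees at once. First,
$\widetilde d(z,\gamma z)\ge\max\{\ell_s(\gamma),\ell_u(\gamma)\}\ge\tfrac12\bigl(\ell_s(\gamma)+\ell_u(\gamma)\bigr)$.
Second, $\ell_s(\gamma)+\ell_u(\gamma)$ is at least the length of the geodesic representative of $\gamma$ in the singular flat metric of the quadratic differential whose horizontal and vertical foliations are $\cF^s,\cF^u$. Flat geodesics are quasi-transverse to both foliations, so they realize both intersection numbers simultaneously. A straight segment with transverse measures $h,v$ has length $\sqrt{h^2+v^2}\le h+v$. That flat length is at least the positive flat systole of the closed surface, and this gives $r_0$.

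This uniform proper discontinuity is what the paper tacitly relies on, via the cited results of Fathi, when it says that $P$ is a covering for the quotient metric and that the product coordinates descend. The paper's proof is otherwise just that assertion, together with Bowen's argument on the complete space $\cZ$ for shadowing.

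Once $r_0$ is in hand, your bracket, expansivity and shadowing arguments are correct, and they are a more explicit version of the same route. Shadowing does not even need $r_0$. Any $\delta$-pseudo-orbit of $\Phi$ lifts to a $\delta$-pseudo-orbit of $\widetilde\Phi$ using $d(Pz,Pw)=\inf_\gamma\widetilde d(z,\gamma w)$, and for backward steps $\widetilde\Phi\,\varphi^{-1}(\gamma)=\gamma\,\widetilde\Phi$.
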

The hyperbolic local product coordinates are induced by the global metric splitting upstairs. In particular, $(\Phi,\cZ)$ is a Smale Space in the sense of Ruelle.

There is a natural equivariant embedding $\iota\colon\widetilde M\to\cT$,
\[
 \iota(x)=(q_s(x),q_u(x))
\]
whose image is
\[
 \widetilde L(f)= \{(\ell^s,\ell^u)\in\cT^s\times\cT^u:\ell^s\cap\ell^u\ne\varnothing\}.
\]
It satisfies  
\[
\begin{tikzcd}
\widetilde M \arrow[r, "F"] \arrow[d, "\iota"'] &
\widetilde M \arrow[d, "\iota"] \\
\mathcal T \arrow[r, "\widetilde\Phi"'] &
\mathcal T
\end{tikzcd}
\]
and induces an embedding $\iota_f\colon M\to\cZ$. Write
\[
L(f)=\iota_f(M). 
\]
Then $\Phi|_{L(f)}$ is conjugate to $f$ through $\iota_f$. Every nontrivial closed connected $\Phi$-invariant subset of $\cZ$ contains
$L(f)$~\cite[Proposition~2.5]{Fathi1990}.

\subsection{Global shadowing and the orbit quotient}
\label{subsec:hhu-orbit-quotient}

Let $g\colon M\to M$ be a homeomorphism isotopic to $f$. Choose the lift $G$ obtained by lifting an isotopy from $f$ to $g$ starting at $F$;
thus $G\gamma=\varphi(\gamma)G$. We use on $\widetilde M$ the metric $D(x,y)=\widetilde d(\iota(x),\iota(y))$.

Fathi's construction gives a continuous $\Gamma$-equivariant map $\widetilde H_g\colon \widetilde M\to\mathcal T$ fitting into the commutative diagram
\begin{equation}\label{eq:hhu-lifted-semiconjugacy}
\begin{tikzcd}
\widetilde M \arrow[r, "G"] \arrow[d, "\widetilde H_g"'] &
\widetilde M \arrow[d, "\widetilde H_g"] \\
\mathcal T \arrow[r, "\widetilde\Phi"'] &
\mathcal T,
\end{tikzcd}
\end{equation}
and satisfying
\begin{align}\label{eq:hhu-lifted-semiconjugacy-metric}
\sup_{x\in\widetilde M}\widetilde d\bigl(\widetilde H_g(x),\iota(x)\bigr)<\infty.
\end{align}

Its two coordinates may be written explicitly as
\begin{equation}
\label{eq:hhu-coordinate-limits}
 \widetilde H_g(x)=
 \left(
   \lim_{n\to\infty}F_s^{-n}q_s(G^n x),\,
   \lim_{n\to\infty}F_u^{n}q_u(G^{-n}x)
 \right).
\end{equation}
The limits exist in the completed trees: consecutive terms are at distances bounded by a constant times $\lambda^{-n}$, because $F$ and
$G$, and their inverses, are at bounded distance. Equivalently, $\widetilde H_g(x)$ is the unique point $z\in\cT$ satisfying
\begin{equation}
\label{eq:hhu-shadowing-characterization}
 \sup_{n\in\mathbb Z}
   \widetilde d(\widetilde\Phi^n z,\iota(G^n x))<\infty.
\end{equation}

Descending to the quotient defines $H_g\colon M\to\cZ$ by the diagram
\[
\begin{tikzcd}
\widetilde M \arrow[d, "p"'] \arrow[r, "\widetilde H_g"] &
\mathcal T \arrow[d, "P"] \\
M \arrow[r, "H_g"'] &
\cZ,
\end{tikzcd}
\]
Denote
\begin{align}
\notag &L(g)=H_g(M)\\
&\Phi_g=\Phi|_{L(g)}\colon L(g)\to L(g).
\label{eq:hhu-Lg-Phig}
\end{align}

Then
\[
\begin{tikzcd}
M \arrow[r, "g"] \arrow[d, "H_g"'] &
M \arrow[d, "H_g"] \\
L(g) \arrow[r, "\Phi_g"'] &
L(g)
\end{tikzcd}
\]

Thus $\Phi$ is the ambient homeomorphism determined by $f$, while $\Phi_g$ is its restriction to the compact invariant set determined by
$g$. In particular, changing $g$ changes $L(g)$ inside the same ambient system $(\cZ,\Phi)$.

To describe precisely which orbits are identified, recall Definition~\ref{def:NielsenEquivalent}: $x\sim_g y$ if there are lifts $\widetilde x,\widetilde y\in\widetilde M$ such that
\begin{equation}
\label{eq:hhu-global-orbit-equivalence}
 \sup_{n\in\mathbb Z}D(G^n\widetilde x,G^n\widetilde y)<\infty.
\end{equation}

We denote
\begin{align*}
 &M_L(g)=M/{\sim_g}\\
 &g_L([x])=[g(x)]
\end{align*}
Then $g_L \colon M_L(g)\to M_L(g)$ is a homeomorphism. Theorem~1.1 of \cite{HHU2004} proves that
\[
 x\sim_g y\quad\Longleftrightarrow\quad H_g(x)=H_g(y):
\]
It follows that $[x]\mapsto H_g(x)$ is a homeomorphism $M_L(g)\to L(g)$, conjugating $g_L$ to $\Phi_g$.

For comparison with the original pseudo-Anosov dynamics, the notation
\[
(f,x)\sim(g,y) 
\]
means that suitable lifts satisfy
\[
 \sup_{n\in\mathbb Z}D(F^n\widetilde x,G^n\widetilde y)<\infty.
\]
Equivalently, $H_g(y)=\iota_f(x)$. Every $f$-orbit is globally shadowed by a $g$-orbit, so $L(f)\subset L(g)$. 

With $J_g=H_g^{-1}(L(f))$, the map $\iota_f^{-1}\circ H_g|_{J_g}$ semiconjugates $g|_{J_g}$ onto
$f$~\cite[Theorem~2.6]{Fathi1990}.

The assignment $g\mapsto L(g)$ is continuous from the $C^0$ topology to the Hausdorff topology on compact subsets of $\cZ$. If $h\colon M\to M$
is homotopic to the identity and $h\circ g=g'\circ h$, then $L(g)=L(g')$~\cite[Theorem~1.2 and Corollary~1.3]{HHU2004}.

\subsection{Local structure and periodic points}
\label{subsec:hhu-local-structure}

The space $L(g)$ is a compact locally connected continuum of topological dimension two, and $\Phi_g \colon L(g)\to L(g)$ is expansive. For $z\in L(g)$, write
\[
\begin{aligned}
 W^s_\varepsilon(z;\Phi_g)
 &=\{w\in L(g):d(\Phi_g^n z,\Phi_g^n w)\le\varepsilon
                         \text{ for every }n\ge0\},\\
 W^u_\varepsilon(z;\Phi_g)
 &=W^s_\varepsilon(z;\Phi_g^{-1}),
\end{aligned}
\]
and let $CW^\sigma_\varepsilon(z;\Phi_g)$ denote the connected component containing $z$. By~\cite[Theorem~1.4]{HHU2004}, there are
$\varepsilon,c>0$ such that these components are nontrivial dendrites and
\[
 \operatorname{diam}CW^\sigma_\varepsilon(z;\Phi_g)\ge c
 \qquad(z\in L(g),\ \sigma\in\{s,u\}).
\]
Every periodic point of $\Phi_g$ outside $L(f)$ is an endpoint of both its stable and its unstable dendrite. The ambient shadowing property does
not extend to these restrictions: $L(g)$ is not an isolated invariant set of $\Phi$, and $\Phi_g$ has no shadowing property, nor is topologically stable~\cite[\S2 and Corollary~1.3]{HHU2004}.

Finally, recall Definition~\ref{def:NielsenEquivalent} for $p,q\in\operatorname{Fix}(g^n)$: it is equivalent to say that one lift of $g^n$ fixes suitable lifts of both points.

In \cite[Lemma~2.8]{LU2003} it is proven that
\[
 p\text{ and }q\text{ are Nielsen equivalent for }g^n  \quad\Longleftrightarrow\quad H_g(p)=H_g(q).
\]
Thus, each nonempty Nielsen class of $g^n$ determines one point of $\operatorname{Fix}(\Phi_g^n)$, and different classes determine different
points. This concerns all nonempty classes, including those with zero fixed-point index.

\section{Hyperbolic measures and homoclinic classes}
\label{sec:hyperbolic-preliminaries}

We recall some facts from nonuniform hyperbolicity that will be used to study the invariant measures of $\Phi_g$ through their lifts to $M$.
Throughout this section, $g$ is a $C^{1+\alpha}$ surface diffeomorphism, $\alpha>0$, unless greater regularity is explicitly assumed. We fix a
smooth Riemannian metric on $M$, with distance $d_M$ and tangent norm $\|\cdot\|$. Invariant measures are understood to be Borel probability
measures. We write $h_\mu(g)$ for the metric entropy of an invariant measure $\mu$; it is a \emph{measure of maximal entropy} if
$h_\mu(g)=h_{\mathrm{top}}(g)$.
\subsection{Lyapunov exponents and Pesin manifolds}
\label{subsec:prelim-pesin}

The Lyapunov exponents describe the exponential growth of tangent vectors under the derivative of the iterates of $g$. We refer the reader to \cite{LyaViana} for the proof of the following facts.

The classical Furstenberg--Kesten theorem implies that, for every invariant measure $\mu$ the limits
\[
  \chi^+(x)=\lim_{n\to\infty}\frac1n\log\|Dg_x^n\|,
  \qquad
  \chi^-(x)=-\lim_{n\to\infty}\frac1n
                   \log\|(Dg_x^n)^{-1}\|
\]
exist for $\mu$-almost every $x$, and $\chi^-(x)\leq\chi^+(x)$. Points for which these limits exists are called
\emph{Lyapunov regular}. When the exponents are distinct, due Oseledet's theorem, there is a
$Dg$-invariant splitting
\[
  T_xM=E^-(x)\oplus E^+(x),
  \qquad
  \lim_{n\to\pm\infty}\frac1n\log\|Dg_x^n v\|
       =\chi^\sigma(x)
       \quad\text{for }v\in E^\sigma(x)\setminus\{0\},
       \quad \sigma\in\{-,+\}.
\]
The exponents are invariant functions of $x$ and hence are constant almost everywhere when $\mu$ is ergodic; their values are then denoted
by $\chi^-(\mu)$ and $\chi^+(\mu)$. We call $\mu$ \emph{hyperbolic of saddle type} if $\chi^-(x)<0<\chi^+(x)$ almost everywhere. All
hyperbolic measures used in this paper are of saddle type. 

Positive entropy ensures this condition on a surface. Indeed, Ruelle's inequality \cite{Ruelle1978}, applied to $g$ and $g^{-1}$, gives, for an
ergodic measure with $h_\mu(g)>0$,
\begin{equation}\label{eq:prelim-ruelle}
  \chi^-(\mu)\leq-h_\mu(g)<0<h_\mu(g)\leq\chi^+(\mu).
\end{equation}
In particular, every ergodic positive-entropy measure $\nu$ of $\Phi_g$ admits an ergodic invariant lift $\mu$ under $H_g$, and every
such lift is hyperbolic: existence follows as in \cite[Proposition~2.13]{CHHU2026}, while $h_\mu(g)\geq h_\nu(\Phi_g)>0$ and
\eqref{eq:prelim-ruelle} give hyperbolicity. This is the ergodic case of \cite[Proposition~2.15]{CHHU2026}.

Let $\mathcal R$ denote the set of regular points with
$\chi^-(x)<0<\chi^+(x)$. For $x\in\mathcal R$, the \emph{Pesin stable
and unstable manifolds} are
\[
  W^s(x;g)=\left\{y\in M:
      \limsup_{n\to\infty}\frac1n
       \log d_M(g^n x,g^n y)<0\right\},
  \qquad W^u(x;g)=W^s(x;g^{-1}).
\]
Pesin's stable manifold theorem says that these sets are injectively immersed curves tangent at $x$ to $E^-(x)$ and $E^+(x)$, respectively;
see \cite[Supplement~S]{KatokHasselblatt1995}. They are invariant in the sense that $g(W^\sigma(x;g))=W^\sigma(gx;g)$ for
$\sigma\in\{s,u\}$. We use their intrinsic topology and arclength metric. A \emph{local plaque} is a sufficiently small embedded
interval in one of these curves containing its base point. The two \emph{half-leaves} at $x$ are the components of $W^\sigma(x;g)\setminus\{x\}$ in the intrinsic topology. For a lift $\widetilde x$ of $x$, we use the corresponding notation $\widetilde W^\sigma(\widetilde x;G)$ upstairs.

For each hyperbolic invariant measure, a set of full measure is covered by countably many compact \emph{Pesin blocks}. On each block the
contraction estimates are uniform, the angle between the stable and unstable directions is bounded away from zero, and the local plaques
have a common positive lower bound on their size and depend continuously on the base point in the $C^1$ topology. Thus nearby
points in a fixed block have transverse intersections between their opposite local plaques. The blocks themselves need not be invariant.

\subsection{Katok's closing lemma and ergodic homoclinic classes}
\label{subsec:prelim-homoclinic}
Katok's closing lemma \cite[\S3]{Katok1980} turns a close return in a Pesin block into a periodic orbit with controlled invariant
manifolds. More precisely, given a block $K$ and $\varepsilon>0$, there is $\delta>0$ such that
\[
  x,g^n x\in K,\qquad d_M(x,g^n x)<\delta
\]
imply the existence of a hyperbolic saddle $p\in\operatorname{Fix}(g^n)$ satisfying
\[
  \max_{0\leq j<n}d_M(g^j p,g^j x)<\varepsilon.
\]
The closing construction also controls the local invariant manifolds of $p$. When the return occurs in a sufficiently small part of $K$,
the stable and unstable plaques of $p$ meet the opposite plaques through nearby points of $K$ transversely.

For a periodic saddle $p$, write $\mathcal O(p)$ for its orbit and set
\[
  W^\sigma(\mathcal O(p);g)
      =\bigcup_{z\in\mathcal O(p)}W^\sigma(z;g),
      \qquad \sigma\in\{s,u\}.
\]
Following \cite[\S1.1]{HHTU2011}, its \emph{ergodic homoclinic class}
is
\begin{equation}\label{eq:prelim-ehc}
  \operatorname{Ehc}(p)=
  \left\{x\in\mathcal R:
    \begin{array}{l}
      W^s(x;g)\pitchfork W^u(\mathcal O(p);g)\ne\varnothing,\\
      W^u(x;g)\pitchfork W^s(\mathcal O(p);g)\ne\varnothing
    \end{array}\right\}.
\end{equation}
Here $\pitchfork$ means that an intersection exists at which the two tangent lines are distinct. This is a measurable $g$-invariant set.
Two periodic saddles $p$ and $q$ are \emph{homoclinically related}, written $p\sim_h q$, if the stable manifold of each orbit meets the
unstable manifold of the other transversely. This is an equivalence relation, and $p\sim_h q$ implies $\operatorname{Ehc}(p)=\operatorname{Ehc}(q)$.

These facts use the inclination lemma: an arc crossing the stable manifold of a periodic saddle transversely has forward iterates
containing subarcs that converge in the $C^1$ topology to any prescribed compact arc of its unstable manifold. Iterates may be
taken to be multiples of the period. The analogous assertion with stable and unstable interchanged follows by applying the lemma to
$g^{-1}$.

The following consequence of closing and recurrence expresses the connection between an ergodic hyperbolic measure and these classes;
see \cite[\S\S3--4]{Katok1980}, \cite{HHTU2011} and \cite{BCS2022}.

\begin{proposition}\label{prop:prelim-homoclinic-approximation}
Let $\mu$ be an ergodic hyperbolic measure. There is a periodic saddle $p$ such that
\[
  \mu(\operatorname{Ehc}(p))=1,
  \qquad
  \operatorname{supp}\mu\subset
    \overline{\{q:q\text{ is a periodic saddle and }q\sim_h p\}}.
\]
\end{proposition}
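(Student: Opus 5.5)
The plan is to build the saddle $p$ from Katok's closing lemma applied inside a Pesin block of large measure, show that $\operatorname{Ehc}(p)$ has positive measure, upgrade to full measure by ergodicity, and then close returns near arbitrary points of the support to obtain homoclinically related saddles accumulating on $\operatorname{supp}\mu$.

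\emph{Step 1: construct $p$.} Fix a Pesin block $K$ with $\mu(K)>0$. Choose a point $x_0\in K\cap\operatorname{supp}(\mu|_K)$ and a small neighborhood $U$ of $x_0$. Take $\delta,\varepsilon$ small enough that every two points of $K\cap U$ have transverse intersections between their opposite local plaques; the uniform plaque size and angle bounds on $K$ make this possible. Poincaré recurrence gives $x\in K\cap U$ and $n$ with $g^nx\in K\cap U$ and $d_M(x,g^nx)<\delta$. Katok's closing lemma, in its strengthened form recalled above, then yields a saddle $p\in\operatorname{Fix}(g^n)$ whose local plaques cross transversely the opposite plaques of every point of $K\cap U'$, for some smaller neighborhood $U'$ of $x_0$ with $\mu(K\cap U')>0$.

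\emph{Step 2: full measure of $\operatorname{Ehc}(p)$.} By Step~1, every $y\in K\cap U'\cap\mathcal R$ satisfies
\[
W^s(y;g)\pitchfork W^u(p;g)\neq\varnothing
\quad\text{and}\quad
W^u(y;g)\pitchfork W^s(p;g)\neq\varnothing .
\]
Hence $\mu(\operatorname{Ehc}(p))\ge\mu(K\cap U')>0$. The set $\operatorname{Ehc}(p)$ is measurable and $g$-invariant: this uses $g(W^\sigma(y))=W^\sigma(gy)$ and the fact that the orbit $\mathcal O(p)$ is invariant. Ergodicity therefore gives $\mu(\operatorname{Ehc}(p))=1$.

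\emph{Step 3: approximation of the support.} Let $z\in\operatorname{supp}\mu$ and $r>0$. Choose a block $K'\supset K$ with $\mu(K'\cap B(z,r))>0$. Intersect $K'$ with $\operatorname{Ehc}(p)$, and then pass to a subset $K''$ of positive measure on which the transverse intersections with $W^{s}(\mathcal O(p))$ and $W^{u}(\mathcal O(p))$ occur within bounded arclength. Since the plaques vary continuously in the $C^1$ topology on $K'$, these intersections persist uniformly over nearby points of $K''$.

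Apply the closing lemma to a recurrent point of $K''\cap B(z,r)$. This produces a saddle $q$ within $r$ of $z$ whose local stable and unstable plaques are $C^1$-close to those of a point $y\in K''$. By persistence of transverse intersections, $W^u(q)$ crosses $W^s(\mathcal O(p))$ transversely and $W^s(q)$ crosses $W^u(\mathcal O(p))$ transversely. This is exactly $q\sim_h p$. If the intersections lie far from the local plaques, I first iterate back along the orbit of $y$, using the inclination lemma to bring the global crossing into a local plaque of some $g^{-k}y$ with $g^{-k}y\in K''$; recurrence lets me choose such $k$.

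\textbf{Main obstacle.} The hard part is Step~3. Pesin manifolds of $y$ are global objects, and only their local plaques are controlled by the closing lemma. The uniformity must be arranged so that the crossings lie on plaques of controlled size, by restricting to the positive-measure subset $K''$ before closing. I expect this to follow from standard arguments as in \cite{Katok1980,BCS2022}.
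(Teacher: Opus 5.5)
\textbf{Gap in Step 3.} Your Steps 1 and 2 are fine and coincide with the paper's argument: close a recurrent orbit in a small piece of a Pesin block, get $\mu(\operatorname{Ehc}(p))>0$ from the local plaque crossings, then use invariance and ergodicity. The gap is in Step 3, at ``By persistence of transverse intersections, $W^u(q)$ crosses $W^s(\mathcal O(p))$ transversely.''

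Katok's closing lemma only gives $C^1$-control of the \emph{local} plaques of $q$. Continuity on $K'$ is likewise a statement about local plaques only. Neither says anything about the arcs of $W^u(q)$ and $W^s(q)$ out to the arclength at which $W^u(y)$ and $W^s(y)$ meet the manifolds of $\mathcal O(p)$. Restricting to $K''$ bounds that arclength, but it does not make those long arcs depend continuously on the base point, and it does not make them approximated by the manifolds of $q$.

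Your proposed repair does not close this. Pulling back by $g^{-k}$ does bring the crossing of $W^u(y)$ with $W^s(\mathcal O(p))$ into a local unstable plaque; this is just backward contraction, and no inclination lemma is involved. But the same pull-back generally pushes the crossing of $W^s(y)$ with $W^u(\mathcal O(p))$ farther out. So you cannot in general localize both crossings at one base point. If you close at $y$, the periodic orbit shadows only $y,\dots,g^ny$, and you have no plaque control near $g^{-k}y$. If you close at $g^{-k}y$, the stable crossing is the one that is no longer local. Making your route rigorous would need either plaque control at two different times along the closed orbit, or a Pesin-type inclination lemma asserting that a saddle whose plaques cross those of a typical point of $\operatorname{Ehc}(p)$ is homoclinically related to $p$.

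\textbf{The missing idea.} You never need to leave the small piece $K\cap U'$ of the block where $p$ was built. Since $z\in\operatorname{supp}\mu$, we have $\mu(B(z,r/2))>0$. By ergodicity and recurrence, a typical $y\in K\cap U'$ enters $B(z,r/2)$ at some time $j$ and returns to $K\cap U'$ at a later time $n>j$. The closing radius depends only on the block and the required accuracy, not on $n$. So closing this long segment with accuracy less than $r/2$ gives a saddle $q\in\operatorname{Fix}(g^n)$ with $g^jq\in B(z,r)$.

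Moreover, $q$ sits in the same small piece as $p$. The local plaques of both are $C^1$-close to plaques of points of $K\cap U$, and those cross pairwise with angle bounded below. Hence $W^u_{\mathrm{loc}}(q)\pitchfork W^s_{\mathrm{loc}}(p)$ and $W^s_{\mathrm{loc}}(q)\pitchfork W^u_{\mathrm{loc}}(p)$, so $q\sim_h p$. Since the relation is defined through orbits, $g^jq$ is a saddle homoclinically related to $p$ lying within $r$ of $z$.

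This is the paper's proof, and it removes your ``main obstacle'' entirely. All the transversality needed is local, and the approximation of $z$ comes from shadowing along the closed segment rather than from closing near $z$.
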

\begin{proof}
Choose a sufficiently small positive-measure part of a Pesin block and close a recurrent orbit in it. The plaque intersections supplied
by the closing lemma place this part of the block in $\operatorname{Ehc}(p)$. Invariance and ergodicity then give $\mu(\operatorname{Ehc}(p))=1$. If an open set $U$ meets $\operatorname{supp}\mu$, a typical orbit starting in the same part of the block visits $U$ before returning to that part. Closing a
sufficiently long such segment, with sufficiently small error, produces a periodic orbit meeting $U$. Its plaques cross those of
$p$ in both directions, so it is homoclinically related to $p$.
\end{proof}
We remark that the condition $\mu(\operatorname{Ehc}(p))=1$ does not assert that this class carries only one invariant ergodic measure.

\subsection{Conditional measures and leafwise injectivity}
\label{subsec:prelim-conditionals}

Fix an ergodic hyperbolic measure $\mu$ with $h_\mu(g)>0$. We recall how its conditional measures describe the distribution of $\mu$ along Pesin manifolds, and how entropy equality over $H_g$ constrains this distribution.

A measurable partition is understood to be countably generated modulo $\mu$. For such a partition $\xi$, let $\xi(x)$ denote
the atom containing $x$. Rokhlin's disintegration theorem \cite{Rokhlin1949} provides a measurable family of conditional
probabilities $\{\mu_x^\xi\}$, unique modulo $\mu$, which is constant along atoms and satisfies
\[
  \mu_x^\xi(\xi(x))=1
  \quad\text{for $\mu$-almost every }x,
  \qquad
  \mu(A)=\int_M\mu_x^\xi(A)\,d\mu(x)
\]
for every Borel set $A\subset M$. In particular, if $\mu(X)=1$, then $\mu_x^\xi(X)=1$ for $\mu$-almost every $x$. If a measurable partition $\eta$ refines $\xi$, its conditional entropy is
\[
  H_\mu(\eta\mid\xi)
    =-\int_M\log\mu_x^\xi(\eta(x))\,d\mu(x).
\]

A partition $\xi^u$ is \emph{subordinate to the unstable manifolds} if, for $\mu$-almost every $x$, the atom $\xi^u(x)$ is contained
in $W^u(x;g)$ and contains a neighborhood of $x$ in its intrinsic topology. It is \emph{increasing} if $g^{-1}\xi^u$ refines $\xi^u$,
where
\[
  (g^{-1}\xi^u)(x)=g^{-1}\bigl(\xi^u(gx)\bigr).
\]

Increasing partitions subordinate to the unstable manifolds will be called \emph{adapted}. By \cite[Proposition~3.1]{LedrappierStrelcyn1982}, for any hyperbolic invariant measure one can find arbitrarily small adapted partitions $\xi^u$ so that
\begin{align*}
 &\bigvee_{n\geq0}g^{-n}\xi^u=\varepsilon\quad\text{modulo }\mu\\
 &W^u(x;g)=\bigcup_{n\geq0}g^n\bigl(\xi^u(g^{-n}x)\bigr)
 \end{align*} 
for $\mu$-almost every $x$. Here $\varepsilon$ is the partition into points, and $\vee$ denotes common refinement.

For these adapted partitions, one can compute the entropy as
\begin{equation}\label{eq:prelim-subordinate-entropy}
  h_\mu(g)
    =H_\mu(g^{-1}\xi^u\mid\xi^u)
    =-\int_M
       \log\mu_x^{\xi^u}\bigl((g^{-1}\xi^u)(x)\bigr)\,d\mu(x);
\end{equation}
see \cite{Rokhlin1967,LedrappierStrelcyn1982}. Applying the same construction to $g^{-1}$ gives a partition $\xi^s$ subordinate to the stable manifolds.

Since $h_\mu(g)>0$, the conditional measures of $\mu$ along stable and unstable plaques are nonatomic; see \cite[Proposition~7.3.1 and Theorem~C$'$]{LedrappierYoung1985II}, applied to $g$ and $g^{-1}$.\par

Note that for these conditional measures, almost every point is accumulated by points in the support from both sides. Indeed. the points
of the support that fail this property belong to the countable set of endpoints of complementary intervals.

Let $\nu=(H_g)_*\mu$, and let $\mathcal F_g$ be the $\mu$-completion of $H_g^{-1}\mathcal B_{L(g)}$: note that $\mathcal F_g$ is a $g$-invariant
$\sigma$-algebra. The entropy of $g$ relative to the quotient is defined by
\[
  h_\mu(g\mid\mathcal F_g)=\sup_{\eta}\lim_{n\to\infty}\frac1n H_\mu\left(\bigvee_{j=0}^{n-1}g^{-j}\eta\,\middle|\,\mathcal F_g\right),
\]
where the supremum is over finite measurable partitions $\eta$. It measures the entropy remaining after conditioning on the
quotient coordinate. 

The Abramov--Rokhlin formula \cite{AbramovRokhlin1962} gives 
\begin{equation}\label{eq:prelim-relative-entropy}
  h_\mu(g)
    =h_\nu(\Phi_g)+h_\mu(g\mid\mathcal F_g).
\end{equation}
In particular, $h_\mu(g)=h_\nu(\Phi_g)$ is equivalent to vanishing relative entropy.

Under this equality, the argument of \cite[Proposition~4.2]{CHHU2026} gives injectivity of $H_g$ along Pesin manifolds on a set of full measure. We recall the partition argument, which applies to the present quotient dynamics situation.

Let $\mathcal P_H$ be the partition into fibers of $H_g$, and set
\[
  \zeta=\xi^u\vee\mathcal P_H.
\]
The fiber partition is invariant. Vanishing relative entropy, applied to $g^{-1}$, implies that every finite measurable partition $\eta$ satisfies
\[
  H_\mu\left(
     \eta\,\middle|\,
     \bigvee_{n\geq1}g^n\eta\vee\mathcal P_H
  \right)=0.
\]
Thus $\eta$ is measurable, modulo $\mu$, with respect to the partition on the right. Approximating $\xi^u$ by an
increasing sequence of finite partitions and using that $g\xi^u$ is coarser than $\xi^u$, we obtain
\[
  \xi^u\vee\mathcal P_H
       =g\xi^u\vee\mathcal P_H
       \quad\text{modulo }\mu.
\]
Hence $g^{-1}\zeta=\zeta$ modulo $\mu$. Since $\zeta$ refines $\xi^u$ and the partitions $g^{-n}\xi^u$ generate $\varepsilon$, it follows that $\zeta=\varepsilon$ modulo $\mu$. Equivalently,
\[
  \mu_x^\zeta=\delta_x
  \quad\text{for $\mu$-almost every }x.
\]
There is, therefore, a conull Borel set on which $H_g$ is injective within each atom of $\xi^u$.

Applying the same argument to $\xi^s$, taking the intersection of all iterates of the resulting conull sets, and using exhaustion of the global Pesin manifolds, we obtain an invariant Borel set $X$ with $\mu(X)=1$ such that
\begin{equation}\label{eq:prelim-leafwise-injectivity}
  H_g\big|_{X\cap W^\sigma(x;g)}
      \text{ is injective},
  \qquad x\in X,\quad \sigma\in\{s,u\}.
\end{equation}
The argument concerns measurable partitions on $M$ and the invariant partition into fibers of $H_g$; it requires no manifold structure
on $L(g)$.

\subsection{Product structure and the Sard argument}
\label{subsec:prelim-holonomy}

Let $g$ be a $C^\infty$ surface diffeomorphism and let $\mu$ be an ergodic hyperbolic measure of maximal entropy. We recall
how the Sard argument and Ben Ovadia's product structure are combined in \cite[Proposition~4.5]{CHHU2026}.

By \cite[Theorems~5.7 and~6.1]{BO2023}, the unstable conditional measures of $\mu$ form an absolutely continuous
family under local stable holonomies. In particular, let $I_1,I_2$ be unstable segments and let
$h^s\colon B\subset I_1\to I_2$ be such a holonomy, obtained by following stable manifolds. Writing $\mu_i^u$ for the
corresponding unstable conditional measures, we have
\begin{equation}\label{eq:prelim-holonomy-measures}
  \mu_1^u(B)>0
  \quad\Longrightarrow\quad
  \mu_2^u\bigl(h^s(B)\bigr)>0.
\end{equation}

The same construction provides leaf measures on local unstable manifolds of periodic saddles. Choose $p$ with $\mu(\operatorname{Ehc}(p))=1$ as in Proposition~\ref{prop:prelim-homoclinic-approximation}, and let $q\sim_h p$. On the local holonomy domains, these measures are equivalent to the images of unstable conditional measures on typical plaques and satisfy the holonomy property~\eqref{eq:prelim-holonomy-measures}. Countably many such local pieces and their periodic iterates determine a measure class on $W^u(q;g)$, preserved by periodic iteration. Choose a measure $m_q^u$ in this class; the same notation will be used for its lift to $\widetilde W^u(\widetilde q;G)$. When $h_\mu(g)>0$, this measure is nonatomic by Section~\ref{subsec:prelim-conditionals} and the local holonomy comparison.\par

Positive entropy also implies that the unstable conditional measures give zero mass to sets of zero Hausdorff dimension
\cite{LedrappierYoung1985II}. On compact local holonomy domains in Pesin blocks, the stable holonomies are bi-Lipschitz; see \cite[\S2.2]{CHHU2026}. Together with the measure comparisons above, this gives the same zero-mass property for $m_q^u$.

We make explicit how the same comparison transfers conull sets. Let $A$ be an invariant $\mu$-conull set of regular points, and let $I$ be an interval in a typical unstable manifold on which $A$ has full conditional measure. Here the conditional measures are extended along the manifold by iteration, using the exhaustion in Section~\ref{subsec:prelim-conditionals}. Fix $k$ with $g^kq=q$. There is a set $A_q$ of full $m_q^u$-measure in $W^u(q;g)$, invariant under $g^k$, such that each $y\in A_q$ lies on a stable manifold through $A$ which meets $W^u(q;g)$ transversely at $y$.
Moreover, every transverse intersection of this stable manifold with $I$ belongs to $A$.

Indeed, disintegration and the local holonomy comparisons give the first assertion on each of the countably many pieces defining the measure class of $m_q^u$. For the second assertion, the Pesin blocks, their iterates, and a countable collection of local charts cover the transverse intersections with $I$ by countably many stable holonomies $h_j$, with domains in $W^u(q;g)$. Since $I\setminus A$ has zero conditional measure,
the holonomy property~\eqref{eq:prelim-holonomy-measures} gives
\[
  m_q^u\bigl(h_j^{-1}(I\setminus A)\bigr)=0
  \qquad\text{for every }j.
\]
Remove these exceptional sets, the exceptional set for the first assertion, and all their forward and backward $g^k$-iterates. Their union is
still $m_q^u$-null because periodic iteration preserves the measure class. Its complement is the required invariant set $A_q$. The same conclusion holds for lifts.

The existence of the required transverse intersections is provided by the Sard argument developed in \cite{HHTU2011SRB} for SRB measures, later adapted to measures of maximal entropy in \cite{BCS2022}; see \cite[\S2.2]{CHHU2026}. In the configuration considered in \cite[Proposition~4.5]{CHHU2026}, a positive conditional-measure family of stable manifolds topologically crosses two disjoint unstable segments. These crossings may initially
be tangential. The Sard argument gives a positive conditional-measure subset for which the intersections with both segments are transverse. Restricting this subset provides a stable holonomy with a domain of positive conditional measure. Ben Ovadia's result then ensures that
its image also has positive conditional measure.

\subsection{Entropy, horseshoes, and semistability}
\label{subsec:prelim-continuity}

In this subsection we enumerate the preliminaries for Theorems~\ref{thm:entropy.formula} and \ref{thm:quotient-entropy-continuity}.
We recall the continuity properties of the quotient maps, the entropy formulas that will be used, and the construction and 
$C^0$ semistability of hyperbolic horseshoes. We retain the notation of Section~\ref{sec:hhu-preliminaries} and the conventions concerning hyperbolic
measures and Pesin blocks from \Cref{subsec:prelim-pesin}.

\medskip
\noindent\textbf{(i) Continuity of the quotient maps.}
As recalled in Section~\ref{subsec:hhu-orbit-quotient}, the semiconjugacies $H_g\colon M\to Z$ depend continuously on $g$ in the $C^0$
topology, and the sets $L(g)$ depend continuously on $g$ in the Hausdorff topology \cite[Theorem~1.2 and \S2]{HHU2004}.
Thus, if homeomorphisms $g_j$ in the isotopy class of $f$ converge uniformly to $g$, then 
\begin{equation}\label{eq:prelim-quotient-continuity}
  H_{g_j}\longrightarrow H_g
  \quad\text{uniformly},
  \qquad
  L(g_j)\longrightarrow L(g)
  \quad\text{in the Hausdorff topology}.
\end{equation}
All these sets are invariant under the same ambient map $\Phi$.
Their union
\[
  K=L(g)\cup\bigcup_{j\geq1}L(g_j)
\]
is compact. The hyperbolic local coordinates recalled in Section~\ref{subsec:hhu-ambient-map} imply that $\Phi$ is expansive on every compact invariant subset of $Z$. Thus, one can choose a single expansivity constant on $K$.

\medskip
\noindent\textbf{(ii) Entropy and expansivity.}
Let $T\colon K\to K$ be an expansive homeomorphism of a compact metric space, with expansivity constant $\delta>0$ (hence, two points whose iterates remain at distance less than $\delta$ for every integer time must coincide). We recall the Adler--Konheim--McAndrew definition of topological entropy \cite{Adler_1965}. Let $A \subset K$ be a compact $T$-invariant subset, and consider $\mathcal U$ a finite open cover of $K$ with mesh smaller than $\delta$. Given $n\geq 0$, write
\[
  \mathcal U^{(n)}=\bigvee_{i=0}^{n-1}T^{-i}\mathcal U,
\]
where the join consists of the intersections $U_0\cap T^{-1}U_1\cap\cdots\cap T^{-(n-1)}U_{n-1}$, with $U_i\in\mathcal U$. Then
\begin{equation}\label{eq:prelim-cover-entropy}
  \htop(T|_A)=\lim_{n\to\infty}\frac1n\log N(\mathcal U^{(n)}|_A)=\inf_{n\geq1}\frac1n\log N(\mathcal U^{(n)}|_A),
\end{equation}
where $N(\mathcal V|_A)$ denotes the least number of members of $\mathcal V$ needed to cover $A$. 

We will also use the variational principle in the form
\[
  \htop(T)
  =\sup\{h_\nu(T):\nu
       \text{ is an ergodic }T\text{-invariant probability measure}\}.
\]
See \cite{ErgWal}. 

We note that expansivity gives uniqueness of sufficiently close semiconjugacies. Indeed, if $S\colon X\to X$ is a homeomorphism, and  $a,b\colon X\to K$ satisfy $a\circ S=T\circ a, b\circ S=T\circ b$ with
\[
   \sup_{x\in X}d(a(x),b(x))<\delta
\]
then
 \[
   \forall n,\ d(T^na(x),T^nb(x))=d(a(S^kx),b(S^kx))<\delta\Rightarrow a(x)=b(x).
\]
We remind the reader that if $T$ is expansive, then each of its iterates is also expansive.

\medskip
\noindent\textbf{(iii) The Brin--Katok formula and separated returns.}
For a homeomorphism $T$ of a compact metric space $(Y,d_Y)$, denote
\begin{align*}
&d_n^T(y,z)=\max_{0\leq i<n}d_Y(T^iy,T^iz)\\
&B_n^T(y,r)=\{z\in Y:d_n^T(y,z)<r\}.
\end{align*}
A set is $(n,r)$-separated if distinct points have $d_n^T$-distance at least $r$.

For an ergodic invariant probability measure $\nu$ of finite entropy, the Brin--Katok local entropy formula \cite{BK1983}
states that, for $\nu$-almost every $y$,
\begin{equation}\label{eq:prelim-brin-katok}
\begin{split}
  h_\nu(T)
  &=\lim_{r\downarrow0}\liminf_{n\to\infty}
      -\frac1n\log\nu(B_n^T(y,r))\\
  &=\lim_{r\downarrow0}\limsup_{n\to\infty}
      -\frac1n\log\nu(B_n^T(y,r)).
\end{split}
\end{equation}

For every $\epsilon>0$, this gives a radius $r>0$, an integer $n_0\geq1$, and a measurable set $A\subset Y$ of positive $\nu$-measure such that
\begin{equation}\label{eq:prelim-uniform-bowen-bound}
  \nu(B_n^T(y,r))
  \leq e^{-n(h_\nu(T)-\epsilon)}
  \qquad(y\in A,\ n\geq n_0).
\end{equation}
To obtain this uniform estimate, first fix a sufficiently small radius in \eqref{eq:prelim-brin-katok}, and then restrict to a
positive-measure set on which the lower bound on $n$ is uniform.

We will apply this estimate to $T=\Phi_g$ and $\nu=(H_g)_*\mu$, where $\mu$ is an ergodic invariant lift. Recall from \Cref{subsec:prelim-pesin} that such a lift exists and is hyperbolic when $g$ is $C^{1+\alpha}$ and $h_\nu(\Phi_g)>0$.

\begin{lemma}\label{lem:prelim-separated-returns}
Given $B\subset H_g^{-1}(A)$ with $\mu(B)>0$, there exists a relatively dense set of integers $n$ and subsets $E_n \subset B\cap g^{-n}B$ so that
\begin{itemize}
  \item $\#E_n\geq \frac{\mu(B)^2}{2}e^{n(h_\nu(\Phi_g)-\epsilon)}$,
  \item $H_g(E_n)$ is $(n,r)$-separated.
\end{itemize}
\end{lemma}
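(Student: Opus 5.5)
The argument combines ergodic recurrence for $\mu$ with a counting bound for Bowen balls in the quotient. We work with $\nu=(H_g)_*\mu$ and $T=\Phi_g$, and with $r,n_0,A$ as in \eqref{eq:prelim-uniform-bowen-bound}. The proof has three steps.

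\textbf{Step 1: many returns.} Since $\mu$ is ergodic,
\[
\frac1N\sum_{n<N}\mu(B\cap g^{-n}B)\to\mu(B)^2 .
\]
Hence the set of $n$ with $\mu(B\cap g^{-n}B)\ge \mu(B)^2/2$ has positive lower density. I expect this to be the main obstacle, because the statement asks for a relatively dense set, i.e.\ one with bounded gaps. Positive density alone does not give bounded gaps. Mixing is not available; for an ergodic lift we only know that the measure is ergodic. I would first try the following:
\begin{itemize}
\item use Khintchine's recurrence theorem in its form for ergodic systems: for every $\eta>0$, the set $\{n:\mu(B\cap g^{-n}B)>\mu(B)^2-\eta\}$ is syndetic;
\item take $\eta=\mu(B)^2/2$.
\end{itemize}
This gives a relatively dense set $S$ of integers $n$ with
\[
\mu(B\cap g^{-n}B)\ge \mu(B)^2/2 .
\]
We may discard the finitely many $n<n_0$ without losing relative density.

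\textbf{Step 2: covering bound.} Fix $n\in S$ with $n\ge n_0$ and set $C_n=B\cap g^{-n}B$. Choose a maximal subset $E_n\subset C_n$ such that $H_g(E_n)$ is $(n,r)$-separated for $T$ and $H_g$ is injective on $E_n$. Maximality means that every $x\in C_n$ satisfies
\[
d_n^T(H_gx,H_gy)<r \quad\text{for some } y\in E_n,
\]
since otherwise $x$ could be added. This uses $H_g(y)\neq H_g(x)$; if $H_g(x)=H_g(y)$ the distance is $0<r$ anyway. Therefore
\[
H_g(C_n)\subset \bigcup_{y\in E_n} B_n^T(H_gy,r).
\]
Each $y\in E_n$ lies in $B\subset H_g^{-1}(A)$, so $H_gy\in A$. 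By \eqref{eq:prelim-uniform-bowen-bound},
\[
\nu\bigl(B_n^T(H_gy,r)\bigr)\le e^{-n(h_\nu(\Phi_g)-\epsilon)} .
\]

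\textbf{Step 3: counting.} Since $\mu(C_n)\le\mu\bigl(H_g^{-1}H_g(C_n)\bigr)$,
\[
\frac{\mu(B)^2}{2}\le \mu(C_n)\le \nu\bigl(H_g(C_n)\bigr)\le \#E_n\, e^{-n(h_\nu(\Phi_g)-\epsilon)} .
\]
Rearranging gives the first bullet, and the second bullet holds by construction of $E_n$.

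Measurability of $H_g(C_n)$ can be avoided. Either use outer measure, or note that $C_n\subset H_g^{-1}\bigl(\bigcup_{y\in E_n} B_n^T(H_gy,r)\bigr)$, which is a Borel set, and compare measures directly.

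Finiteness of $E_n$ follows from compactness of $L(g)$: an $(n,r)$-separated set in a compact space is finite. So a maximal $E_n$ exists and is finite.
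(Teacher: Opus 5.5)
Your proposal is correct and follows the paper's proof. The paper also uses Khintchine's recurrence theorem to get a syndetic set of times with $\mu(B\cap g^{-n}B)\ge\mu(B)^2/2$. It then takes a maximal $(n,r)$-separated subset of $H_g(B\cap g^{-n}B)$ with a section back to $B\cap g^{-n}B$ (your injectivity requirement does the same job), and counts using the uniform bound \eqref{eq:prelim-uniform-bowen-bound} on Bowen balls centered in $A$. Your restriction to $n\ge n_0$ and your handling of the measurability of $H_g(B\cap g^{-n}B)$ make explicit points the paper leaves implicit.
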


\begin{proof}
  Khintchine's recurrence theorem gives a syndetic set of integers $n$ so that $\mu(B\cap g^{-n}B)\geq \frac{\mu(B)^2}{2}:=c_B$. At each one of these times choose a maximal $(n,r)$-separated subset of $H_g(B\cap g^{-n}B)$, and let $E_n \subset B\cap g^{-n}B$ by a section under $H_g$. Then 
  \[
  H_g(B\cap g^{-n}B) \subset \bigcup_{x\in E_n} B_n^T(x,r)
  \]
  so, by \eqref{eq:prelim-uniform-bowen-bound},
\[
  c_B
  \leq\mu(B\cap g^{-n}B)
  \leq\sum_{x\in E_n}\nu(B_n^{\Phi_g}(H_g(x),r))
  \leq\#E_n\,e^{-n(h_\nu(\Phi_g)-\epsilon)}\Rightarrow \#E_n\geq c_Be^{n(h_\nu(\Phi_g)-\epsilon)},
\]
\end{proof}

The set $B$ will be chosen after the radius $r$, which allows the returns to be placed in a sufficiently small Pesin rectangle.

\medskip
\noindent\textbf{(iv) Hyperbolic return rectangles.}
The Katok--Mendoza construction turns returns to a small part of a Pesin block into branches of a hyperbolic horseshoe;
see \cite[Theorem~S.4.16 and the proof of Theorem~S.5.9]{KatokHasselblatt1995}. We recall the geometric form of the construction.

Fix a Lyapunov chart, and take the stable direction to be vertical, and the unstable direction to be horizontal. A vertical strip joins
the bottom to the top of a rectangle and is bounded on its left and right by graphs over the vertical coordinate. A horizontal strip is defined by interchanging the coordinates. By reducing the size of the domain, these  boundary graphs in the construction can be taken to have slopes uniformly smaller than one.

Let $g$ be $C^{1+\alpha}$ and let $\mu$ be an ergodic hyperbolic measure. Given $\eta>0$ and a measurable set $A_0\subset M$ with $\mu(A_0)>0$, the regular-rectangle construction provides a rectangle $R$ and a positive-measure set $B\subset A_0$ in the central part of $R$ and in a fixed
Pesin block. The restriction to $A_0$ is possible because regular rectangles cover compact regular sets of arbitrarily
large measure; one of their central parts must meet $A_0$ in positive measure.

For every sufficiently large $n$ and every $x\in B\cap g^{-n}B$, there is a compact vertical strip $V_x\subset R$ containing $x$ such that $g^n(V_x)$ is a horizontal strip crossing $R$ and
\begin{equation}\label{eq:prelim-return-accuracy}
  \operatorname{diam}_{d_M}(g^iV_x)<\eta,
  \qquad 0\leq i\leq n.
\end{equation}
\begin{figure}[htbp]
\centering
\begin{tikzpicture}[
  x=1.05cm,y=1.05cm,
  font=\small,
  frame/.style={draw=black!80,line width=.75pt},
  original/.style={draw=black!55,dashed,line width=.6pt},
  point/.style={circle,fill=black,inner sep=1.35pt},
  map/.style={-{Stealth[length=2.2mm]},line width=.85pt},
  every node/.style={inner sep=2pt}
]
\definecolor{KMblue}{RGB}{0,104,148}
\definecolor{KMorange}{RGB}{184,91,28}

\def\Vone{
  (1.05,0) .. controls (1.34,1.10) and (1.13,2.20) .. (1.25,3.30)
  -- (1.65,3.30)
  .. controls (1.53,2.20) and (1.74,1.10) .. (1.45,0) -- cycle}
\def\Vtwo{
  (2.50,0) .. controls (2.35,1.10) and (2.68,2.20) .. (2.55,3.30)
  -- (2.95,3.30)
  .. controls (3.08,2.20) and (2.75,1.10) .. (2.90,0) -- cycle}

\path[fill=KMblue!15,draw=KMblue,line width=.8pt] \Vone;
\path[fill=KMorange!17,draw=KMorange,line width=.8pt] \Vtwo;
\draw[frame] (0,0) rectangle (4,3.30);
\node[anchor=south west] at (0,3.45) {$R$};
\node[KMblue,above] at (1.45,3.34) {$V_1$};
\node[KMorange,above] at (2.75,3.34) {$V_2$};
\node[point,label={[xshift=-2pt]left:$x_1$}] at (1.415,1.65) {};
\node[point,label={[xshift=2pt]right:$x_2$}] at (2.71,1.35) {};
\node at (2,-.43) {Selected return strips};

\draw[map] (4.35,1.65) -- node[above=4pt] {$g^n$} (5.55,1.65);

\begin{scope}[xshift=6.1cm]
  \path[fill=KMblue!15,draw=KMblue,line width=.8pt]
    (0,.78) .. controls (1.20,.95) and (2.50,.84) .. (4,.98)
    -- (4,1.40)
    .. controls (2.50,1.26) and (1.20,1.37) .. (0,1.20) -- cycle;
  \path[fill=KMorange!17,draw=KMorange,line width=.8pt]
    (0,2.05) .. controls (1.20,1.89) and (2.50,2.13) .. (4,1.98)
    -- (4,2.40)
    .. controls (2.50,2.55) and (1.20,2.31) .. (0,2.47) -- cycle;
  \draw[original] \Vone;
  \draw[original] \Vtwo;
  \draw[frame] (0,0) rectangle (4,3.30);
  \node[anchor=south west] at (0,3.45) {$R$};
  \node[black!65,above] at (1.45,3.34) {$V_1$};
  \node[black!65,above] at (2.75,3.34) {$V_2$};
  \node[point] at (2.05,1.11) {};
  \node[above=9pt,inner sep=1pt] at (2.05,1.11)
    {$g^n x_1$};
  \node[point] at (1.90,2.23) {};
  \node[above=9pt,inner sep=1pt] at (1.90,2.23)
    {$g^n x_2$};
  \node[KMblue,anchor=west] at (4.12,1.18) {$g^n(V_1)$};
  \node[KMorange,anchor=west] at (4.12,2.19) {$g^n(V_2)$};
  \node at (2,-.43) {Each image crosses both strips};
\end{scope}
\end{tikzpicture}
\caption{Katok--Mendoza return rectangles. Both panels depict the same rectangle $R$, with the stable direction vertical and the unstable
direction horizontal. The points $x_i$ and $g^n x_i$ lie in the chosen return set $B$. Each vertical strip $V_i$ follows the return segment
of $x_i$ and is mapped to a horizontal strip. The dashed boundaries in the right panel mark the original vertical strips. Each horizontal image crosses both vertical strips, so all transitions between the two selected returns are possible.}
\label{fig:katok-mendoza-returns}
\end{figure}

Thus every point in $V_x$ follows the selected return segment with the prescribed accuracy. The vertical strips lie away from the left and right sides
of $R$, and their horizontal images lie away from its top and bottom sides. The return maps preserve and expand cones about the horizontal direction; their inverses preserve and expand cones about the vertical direction, with uniform estimates. The rectangle and $B$ are fixed before choosing the return time.

Any finite collection of $N\geq2$ pairwise disjoint such strips, all with the same return time $n$, defines a horseshoe
\begin{equation}\label{eq:prelim-return-horseshoe}
  \Lambda
  =\bigcap_{k\in\mathbb Z}g^{-kn}
       \left(\bigcup_{i=1}^{N}V_i\right).
\end{equation}
This is a compact, locally maximal hyperbolic set for $g^n$. Every horizontal image crosses every vertical strip, so any
selected return can be followed by any other. The hyperbolic estimates give a unique point for each bi-infinite itinerary.
Consequently, the itinerary map conjugates $g^n|_\Lambda$ to the full two-sided shift on $N$ symbols, and
\begin{equation}\label{eq:prelim-horseshoe-entropy}
  \htop(g^n|_\Lambda)=\log N.
\end{equation}
The orbit estimate \eqref{eq:prelim-return-accuracy} is the feature of this construction that will be essential below.

We shall select return segments whose images under $H_g$ are separated, as in \ref{lem:prelim-separated-returns}. By uniform continuity of $H_g$, the strips can be chosen so that their points follow these segments closely enough to preserve separation in the quotient. Whenever two horseshoe points have different itineraries, their images under $H_g$ can therefore be distinguished at an iterate where the itineraries differ. This will give injectivity of $H_g$ on the horseshoe, so that its entropy is also the entropy of its image in $L(g)$. The semistability statement recalled next will allow us to follow this horseshoe under $C^0$ perturbations.

\medskip
\noindent\textbf{(v) $C^0$ semistability.}
The horseshoes just described persist as factors under small $C^0$ perturbations. More precisely, Nitecki's semistability
theorem \cite{Nitecki1971}, applied to such a horseshoe $\Lambda$ for $T=g^n$, gives the following statement. If homeomorphisms $T_j\colon M\to M$ converge uniformly to $T$, then, for all sufficiently large $j$, there are compact $T_j$-invariant sets $\Lambda_j$ and continuous surjections
$r_j\colon\Lambda_j\to\Lambda$ satisfying
\begin{equation}\label{eq:prelim-horseshoe-semistability}
  r_j\circ T_j=T\circ r_j,
  \qquad
  \sup_{z\in\Lambda_j}d_M(z,r_j(z))\longrightarrow0.
\end{equation}

For the strip construction above, $\Lambda_j$ can be taken to be the set of points whose complete $T_j$-orbits remain in $\bigcup_iV_i$. The map $r_j$ assigns to each such point the unique point of $\Lambda$ with the same itinerary. The crossings have positive margins and survive uniform perturbations, so every itinerary of the original horseshoe is still realized. This explains the surjectivity of $r_j$ and why the statement applies to perturbing homeomorphisms without differentiability assumptions.

\section{Entropy formula}\label{sec:entropy.formula}
Recall that $h_N(g)$ denotes the exponential growth of non-empty Nielsen classes of $g^n$ (cf. \eqref{eq:Nielsengrowth}), 
\[
  h_N(g)=\limsup_{n\to\infty}\frac1n\log\max\{1,C_n(g)\}.
\]

\begin{proof}[Proof of \Cref{thm:entropy.formula}]
By \Cref{subsec:hhu-local-structure}, two points of $\operatorname{Fix}(g^n)$ have the same image under $H_g$
if and only if they are Nielsen equivalent. Hence
\[
  C_n(g)=\#H_g(\operatorname{Fix}(g^n))\leq\#\operatorname{Fix}(\Phi_g^n).
\]
Moreover, expansivity implies that $\operatorname{Fix}(\Phi_g^n)$ is $(n,\delta)$-separated for a fixed $\delta>0$. It follows that
\[
  h_N(g)
  \leq\limsup_{n\to\infty}\frac1n
       \log\max\{1,\#\operatorname{Fix}(\Phi_g^n)\}
  \leq\htop(\Phi_g).
\]
For the reverse inequality, assume first that $g$ is $C^{1+\alpha}$. Let $\nu$ be an ergodic $\Phi_g$-invariant probability measure with positive entropy, and consider an ergodic hyperbolic lift $\mu$ under $H_g$. Fix $\epsilon>0$. Choose the radius $r>0$ and the positive-$\nu$-measure set $A\subset L(g)$ given by \eqref{eq:prelim-uniform-bowen-bound} in \Cref{subsec:prelim-continuity}, item~(iii). We shall select our return points in $H_g^{-1}(A)$, so that the separated-return estimate applies to their images. By uniform continuity of $H_g$, choose $\eta>0$
such that
\[
  d_M(a,b)<\eta
  \quad\Longrightarrow\quad
  d(H_g(a),H_g(b))<r/3.
\]
Choose a Pesin block $K$ satisfying $\mu(K\cap H_g^{-1}(A))>0$. For this block and the accuracy $\eta$, Katok's closing
lemma, recalled in \Cref{subsec:prelim-homoclinic}, provides a closing radius $\rho>0$.
Choose a measurable set
\[
  B\subset K\cap H_g^{-1}(A),
  \qquad
  \mu(B)>0,
  \qquad
  \operatorname{diam}_{d_M}(B)<\rho.
\]

The separated-return estimates given in \ref{lem:prelim-separated-returns}, applied to $B$, provides arbitrarily large $n$ and finite sets $E_n\subset B\cap g^{-n}B$ such that $H_g$ is injective on $E_n$, the set $H_g(E_n)$ is $(n,r)$-separated, and
\[
  \#E_n\geq \frac{\mu(B)^2}{2} e^{n(h_\nu(\Phi_g)-\epsilon)}.
\]
For each $x\in E_n$, both $x$ and $g^n x$ belong to $B$. Our choice of $B$ therefore allows us to close this return: there is $p_x\in\operatorname{Fix}(g^n)$ with
\[
  \max_{0\leq j<n}d_M(g^j p_x,g^j x)<\eta.
\]
The choice of $\eta$ and the semiconjugacy identity give
\[
  d_n^{\Phi_g}\bigl(H_g(p_x),H_g(x)\bigr)<r/3.
\]
Since the images of distinct points of $E_n$ are $(n,r)$-separated, we get
\[
  d_n^{\Phi_g}\bigl(H_g(p_x),H_g(p_y)\bigr)>r/3
  \qquad(x\ne y).
\]
Thus the points $H_g(p_x)$ are distinct, and the periodic points $p_x$ belong to distinct Nielsen classes. We obtain
\[
  C_n(g)\geq\#E_n
  \geq \frac{\mu(B)^2}{2} e^{n(h_\nu(\Phi_g)-\epsilon)}.
\]
Taking the limsup gives $h_N(g)\geq h_\nu(\Phi_g)-\epsilon$. Letting $\epsilon\to0$ and applying the variational
principle we get
\[
  h_N(g)\geq\htop(\Phi_g).
\]
Now we consider the case when $g$ is a $C^1$ Axiom~A diffeomorphism. Note that by the variational principle, every invariant probability measure is supported on on the uniformly hyperbolic nonwandering set $\Omega(g)$. In the preceding construction, take $K=\Omega(g)$ and use the uniform hyperbolic closing lemma (see for example \cite{KatokHasselblatt1995}) to choose $\rho$. The same separated returns then produce periodic points in distinct Nielsen classes, proving the reverse inequality in this case as well.

Together with the initial inequalities, this proves all three equalities.
\end{proof}
\begin{remark}
    The $C^{1+\alpha}$ hypothesis above is only used to apply Katok's closing lemma. 
\end{remark}

\begin{proof}[Proof of \Cref{thm:strict-entropy-gap}]
Suppose, to the contrary, that
\[
  h_{\mathrm{top}}(g)=h_{\mathrm{top}}(\Phi_g)>h_{\mathrm{top}}(f).
\]

\Cref{thm:entropy.formula} relates the entropy of the quotient to the exponential growth of Nielsen classes. When this entropy exceeds that of $f$,
the excess growth gives rise to periodic points outside $L(f)$. The lifted tree coordinates of these new periodic points are endpoints (\Cref{subsec:hhu-local-structure}). These endpoints provide the geometric obstruction to entropy equality: we will use them to produce a fold in the tree image of an unstable  manifold, contradicting the leafwise injectivity of $H_g$ established in \Cref{subsec:prelim-conditionals} for the case when the relative entropy over $H_g$ vanishes, as it does for an invariant lift of a maximal-entropy measure of $\Phi_g$
under the assumption $\htop(\Phi_g)=\htop(g)$.\par
\smallskip
Since $\Phi_g$ is expansive on the compact space $L(g)$, its entropy map is upper semicontinuous and it has an ergodic measure of maximal
entropy $\nu$. By \Cref{subsec:prelim-pesin}, $\nu$ admits an ergodic invariant lift $\mu$ under $H_g$. The factor inequality and our assumption give
\[
  h_\nu(\Phi_g)\leq h_\mu(g)\leq h_{\mathrm{top}}(g)
     =h_\nu(\Phi_g).
\]
Thus $\mu$ is a hyperbolic measure of maximal entropy, again by \Cref{subsec:prelim-pesin}. Moreover, $L(f)$ is invariant, so ergodicity gives
$\nu(L(f))\in\{0,1\}$. The value $1$ would force $h_\nu(\Phi_g)=h_\nu(\Phi|_{L(f)})\leq h_{\mathrm{top}}(\Phi|_{L(f)})
=h_{\mathrm{top}}(f)$, contrary to the choice of $\nu$. Recalling that $J_g=H_g^{-1}(L(f))$ and $(H_g)_*\mu=\nu$, we obtain
\[
\mu(J_g)=\nu(L(f))=0.
\]
By \Cref{subsec:prelim-conditionals}, equations \eqref{eq:prelim-relative-entropy}--\eqref{eq:prelim-leafwise-injectivity}, the entropy equality gives
an invariant conull Borel set $X$ with the following lifted leafwise injectivity property:

\begin{equation}\label{eq:thm43-claims-injectivity}
  \left.\widetilde H_g\right|_{\widetilde X\cap
      \widetilde W^\sigma(\widetilde x;G)}
  \quad\text{is injective},
  \qquad \widetilde x\in\widetilde X,\quad \sigma\in\{s,u\}.
\end{equation}
Here $\widetilde X$ is the full lift of $X$. We may shrink $X$ by
invariant null sets whenever necessary.

\medskip
\begin{claim}\label{claim:unbounded.half.leaves}
After reducing $X$, every stable and unstable half-leaf based at a point of $\widetilde X$ has an unbounded $\widetilde H_g$-image.
\end{claim}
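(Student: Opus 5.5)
The plan is to exploit the exact scaling of $\widetilde\Phi$ on the tree coordinates and to rule out bounded half-leaves by Poincaré recurrence. I treat unstable half-leaves; the stable case follows by applying the same argument to $g^{-1}$, with the roles of the two trees interchanged.

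\textbf{Step 1: images of unstable leaves lie in unstable sets of $\widetilde\Phi$.} First I will check that $\widetilde H_g(\widetilde W^u(\widetilde x;G))\subset \widehat{\cT}^s\times\{z_u\}$, where $z_u$ is the $u$-coordinate of $\widetilde H_g(\widetilde x)$. If $\widetilde y\in\widetilde W^u(\widetilde x;G)$, then $D(G^{-n}\widetilde x,G^{-n}\widetilde y)$ stays bounded for $n\ge0$. By \eqref{eq:hhu-lifted-semiconjugacy-metric}, the $u$-distance between $\widetilde\Phi^{-n}\widetilde H_g(\widetilde x)$ and $\widetilde\Phi^{-n}\widetilde H_g(\widetilde y)$ is then bounded for $n\ge0$. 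By \eqref{eq:hhu-global-splitting} this distance equals $\lambda^{n}d_u$ of the original $u$-coordinates, so those coordinates coincide. The same conclusion can be read off the limit formula \eqref{eq:hhu-coordinate-limits}. Consequently the diameter of the image of a half-leaf equals its diameter in $d_s$.

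\textbf{Step 2: a scaling cocycle.} For $\widetilde x\in\widetilde X$ and each of the two unstable half-leaves $W^{\pm}(\widetilde x)$, set
\[
\Delta^\pm(x)=\operatorname{diam}_{d_s}\widetilde H_g\bigl(W^\pm(\widetilde x)\bigr)\in[0,\infty].
\]
This quantity does not depend on the lift, by $\Gamma$-equivariance and isometry. It is measurable, since it can be computed as a countable supremum over compact subarcs. The half-leaves are carried to half-leaves, $G(W^\pm(\widetilde x))=W^{\pm'}(G\widetilde x)$, with a possible swap of sides. Combining $\widetilde H_g\circ G=\widetilde\Phi\circ\widetilde H_g$ with Step 1 and \eqref{eq:hhu-tree-scaling} gives the exact relation
\[
\Delta^{\pm'}(gx)=\lambda\,\Delta^{\pm}(x).
\]
To handle the side swap, I will work with $\Delta(x)=\min\{\Delta^+(x),\Delta^-(x)\}$ and with $\max\{\Delta^+(x),\Delta^-(x)\}$; each of these satisfies $\Delta(gx)=\lambda\Delta(x)$.

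\textbf{Step 3: positivity.} I will show that $\Delta^\pm(x)>0$ for $\mu$-a.e.\ $x$. The unstable conditional measures are nonatomic, and a.e.\ point is accumulated by the support from both sides (\Cref{subsec:prelim-conditionals}). Hence each half-plaque contains points of $X$ distinct from $x$. By \eqref{eq:thm43-claims-injectivity}, their $\widetilde H_g$-images differ from $\widetilde H_g(\widetilde x)$. By Step 1, these images differ in the $s$-coordinate, which gives $\Delta^\pm(x)>0$.

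\textbf{Step 4: recurrence.} The set $Y=\{x:0<\Delta(x)<\infty\}$ is $g$-invariant by Step 2. Suppose $\mu(Y)>0$. Then some set $\{a\le\Delta\le b\}$ with $0<a<b<\infty$ has positive measure. By Poincaré recurrence, a.e.\ point of this set returns to it at some time $n\ge1$. This is impossible, because $\Delta(g^nx)=\lambda^n\Delta(x)$ and $\lambda^n>b/a$ for large $n$. Hence $\Delta=\infty$ a.e. I then remove from $X$ the invariant null set where $\Delta<\infty$, together with the corresponding set for $g^{-1}$ and stable half-leaves.

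\textbf{Main obstacle.} The delicate points are measurability of $\Delta^\pm$ and the bookkeeping of half-leaves. Measurability I will obtain by exhausting $W^u$ via $g^n(\xi^u(g^{-n}x))$ as in \Cref{subsec:prelim-conditionals}, so that $\Delta^\pm$ becomes a limit of measurable functions of local plaques. For the bookkeeping, the min/max device of Step 2 makes the orientation-reversing case irrelevant.
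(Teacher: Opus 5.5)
Your proposal is correct and follows essentially the same route as the paper: show via the global splitting that one tree coordinate of $\widetilde H_g$ is constant along each lifted Pesin manifold, get positivity from two-sided accumulation of the conditional measures combined with leafwise injectivity, and then exclude finite positive values of the resulting $\lambda$-scaling cocycle by Poincaré recurrence. The differences are only in presentation: the paper treats stable half-leaves first, using the radius $R^s$ measured from $\widetilde H_g(\widetilde x)$, which satisfies $R^s(gx)=\lambda^{-1}R^s(x)$, whereas you treat unstable half-leaves first, using the $d_s$-diameter, which is multiplied by $\lambda$ under $g$.
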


\begin{proof}[Proof of Claim \ref{claim:unbounded.half.leaves}]
We first identify the coordinate that varies along each manifold. Let $\widetilde x$ and $\widetilde y$ lie on the same lifted
stable manifold. Their forward iterates approach one another: 
\[
  D(G^n\widetilde x,G^n\widetilde y)\longrightarrow 0,
\]
where $D$ is the metric on $\widetilde M$ induced by the embedding $\iota$, as in Section~\ref{subsec:hhu-orbit-quotient}. Let $C$ be the uniform
displacement bound in \eqref{eq:hhu-lifted-semiconjugacy-metric}. The lifted semiconjugacy relation and the global splitting
\eqref{eq:hhu-global-splitting} give
\[
\lambda^n d_s\bigl(
       (\widetilde H_g)_s(\widetilde x),(\widetilde H_g)_s(\widetilde y)\bigr) = d_s\bigl((\widetilde H_g)_s(G^n\widetilde x),(\widetilde H_g)_s(G^n\widetilde y)\bigr) \leq 2C+D(G^n\widetilde x,G^n\widetilde y).
\]
The last inequality follows by comparing each semiconjugacy image with the corresponding point under $\iota$; each comparison contributes at most $C$. The right-hand side remains bounded, while $\lambda^n\to\infty$. The initial distance in the $s$-coordinate must therefore be zero.
Thus $(\widetilde H_g)_s$ is constant on each lifted stable
manifold.

For two points on the same lifted unstable manifold, their backward iterates approach one another. Under backward iteration it is the $u$-coordinate that expands by $\lambda$. The same estimate, with $G^{-n}$ and $d_u$, therefore shows that $(\widetilde H_g)_u$ is constant on each lifted unstable
manifold.

The two-sided accumulation property in Section~\ref{subsec:prelim-conditionals} implies that, for almost every $x$, each stable half-leaf contains points of $X$ arbitrarily close to $x$. Their images differ from the image of $x$ by~\eqref{eq:thm43-claims-injectivity}. Thus the image of
each half-leaf has positive radius, measured from $\widetilde H_g(\widetilde x)$.

Let $R^s(x)$ be the smaller of these two radii, allowing the value $\infty$. It is measurable and independent of the lift. Since the
only varying coordinate contracts by $\lambda^{-1}$, we have 
\[
  R^s(gx)=\lambda^{-1}R^s(x).
\]
If $0<R^s(x)<\infty$, this radius tends to zero under forward iteration. In particular, a point of $\{a\leq R^s\leq b\}$, with $0<a<b<\infty$, eventually leaves that set forever. Poincar\'e recurrence forces every such set to have measure zero. A countable cover of the positive finite
values therefore gives $R^s=\infty$ almost everywhere. Apply the same argument to $g^{-1}$ for the unstable half-leaves,
and discard the exceptional null sets and their iterates.
\end{proof}

Let $p_0$ be the saddle given by Proposition~\ref{prop:prelim-homoclinic-approximation}, so that $\mu(\operatorname{Ehc}(p_0))=1$. We may replace $p_0$ by any homoclinically related saddle without changing its ergodic homoclinic class, as recalled in Section~\ref{subsec:prelim-homoclinic}. We choose this replacement outside $J_g$. Since $\mu(J_g)=0$, there is a point $x\in\operatorname{supp}\mu\setminus J_g$. The set $J_g$ is closed, so a sufficiently small neighborhood of $x$ is disjoint from $J_g$. Proposition~\ref{prop:prelim-homoclinic-approximation} ensures that
saddles homoclinically related to $p_0$ occur arbitrarily close to $x$. Choose one in this neighborhood and denote it by $p$. Then
\[
  p\notin J_g,
  \qquad
  \operatorname{Ehc}(p)=\operatorname{Ehc}(p_0),
  \qquad
  \mu(\operatorname{Ehc}(p))=1.
\]
We fix this $p$ for the remainder of the theorem. 
\medskip
\begin{claim}\label{claim:periodic.saddle.related.to.p}
There is a periodic saddle $q\notin J_g$, homoclinically related to the saddle $p$, with the following property. Fix a lift $\widetilde q$ of $q$,
and let $m_q^u$ be the leaf measure introduced in Section~\ref{subsec:prelim-holonomy}. For every open interval
$I\subset\widetilde W^u(\widetilde q;G)$ containing $\widetilde q$, the two components $I^-$ and $I^+$ of
$I\setminus\{\widetilde q\}$ satisfy
\[
  m_q^u(I^-)>0
  \qquad\text{and}\qquad
  m_q^u(I^+)>0.
\]
\end{claim}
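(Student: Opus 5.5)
We will obtain $q$ from $p$ by choosing a periodic saddle whose unstable manifold is accumulated on both sides by the support of the unstable leaf measures. We need to keep it outside the closed set $J_g$ and homoclinically related to $p$. The starting point is the nonatomicity and two-sided accumulation property of the unstable conditionals of $\mu$ (Section~\ref{subsec:prelim-conditionals}). For $\mu$-almost every $x$, the conditional measure $\mu^{\xi^u}_x$ charges both components of $\xi^u(x)\setminus\{x\}$ in every neighborhood of $x$.

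First we fix a Pesin block $K$ of positive measure and a typical point $x\in K\cap\operatorname{supp}\mu$ with this two-sided property. We take $x$ outside $J_g$, which is possible since $\mu(J_g)=0$, and also a density point of $K$. We shrink a neighbourhood $U$ of $x$ so that $U\cap J_g=\varnothing$ and the plaques through points of $K\cap U$ form a local product structure.

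Next we apply Katok's closing lemma (Section~\ref{subsec:prelim-homoclinic}) to a recurrence of $x$ to a very small part of $K\cap U$. This gives a periodic saddle $q\in U$, hence $q\notin J_g$. Its local unstable plaque crosses transversely the stable plaques through nearby points of $K$. As in the proof of Proposition~\ref{prop:prelim-homoclinic-approximation}, $q\sim_h p$.

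The crossing points split into those on either side of $q$ along $W^u_{\mathrm{loc}}(q)$. Consider the stable holonomy from the unstable plaque of $x$ to that of $q$. It maps points of $\xi^u(x)$ on each side of $x$, near $x$, to points on a definite side of $q$. Orientation is preserved because the plaques are $C^1$-close graphs over the horizontal direction. Hence each half $I^\pm$ of a small interval around $\widetilde q$ contains the holonomy image of a set of positive $\mu^{\xi^u}_x$-measure. By the holonomy property \eqref{eq:prelim-holonomy-measures} and the definition of $m_q^u$ in Section~\ref{subsec:prelim-holonomy}, these images have positive $m_q^u$-measure.

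This handles one small interval. For an arbitrary open interval $I\ni\widetilde q$, both $I^\pm$ contain the small intervals obtained after shrinking. The positivity for every interval then follows, since $m_q^u$ charges every neighbourhood on each side.

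The main obstacle is ensuring that both sides near $q$ itself are charged, not just some region away from $q$. Holonomy images only cover a Cantor-like set, and one must check that it accumulates at $q$ from both sides. To achieve this, we use periodic invariance: the measure class of $m_q^u$ is preserved by $g^k$, where $k$ is the period. The inverse iterate $g^{-k}$ contracts $W^u(q)$ toward $q$ while preserving the sides, since we may take $k$ even, or use $g^{2k}$. So if a half-leaf has a set of positive measure at some distance, its $g^{-jk}$-images give positive measure arbitrarily close to $q$ on the same side. Thus it suffices to find positive mass on each half-leaf of $W^u(q)$ at all, which is what the holonomy step provides.
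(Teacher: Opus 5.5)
Your proposal is correct and follows essentially the same route as the paper. You choose a typical point $x\notin J_g$ whose unstable conditional, restricted to the block, charges both sides; close a recurrence to get $q\sim_h p$ near $x$; push positive mass onto both unstable branches of $q$ by absolute continuity of stable holonomy; and use backward iteration by an even multiple of the period to reach every one-sided neighbourhood of $\widetilde q$. The one point to state explicitly, as the paper does, is the order of choices: fix the two positive-measure sets on either side of $x$ at a definite distance from $x$ before closing, and close with error smaller than that distance. Orientation preservation alone does not place their holonomy images on opposite sides of $q$, since $W^s_{\mathrm{loc}}(x)$ need not pass through $q$.
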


\begin{proof}[Proof of Claim \ref{claim:periodic.saddle.related.to.p}]
After intersecting $X$ with $\operatorname{Ehc}(p)\setminus J_g$, we still have an invariant conull set. In a small Pesin rectangle contained in $M\setminus J_g$, choose a recurrent point $x\in X$ at which the unstable conditional measure restricted to the Pesin block has support
on both sides. Such points have full measure in the block (recall that conditionals are non-atomic).

Choose positive conditional-measure sets on either side of $x$, away from $x$ itself. Katok's closing lemma, with the plaque
control in Section~\ref{subsec:prelim-homoclinic}, produces a periodic saddle $q$ sufficiently close to $x$ that the stable plaques through these two sets meet opposite unstable branches of $q$ transversely. The same plaque control preserves the transverse connections with the orbit of
$p$, so $q\sim_h p$. Since the rectangle lies outside $J_g$, we also have $q\notin J_g$.

By the absolute continuity of stable holonomy with respect to the leaf measures described in Section~\ref{subsec:prelim-holonomy}, the images of these
two sets on $\widetilde W^u(\widetilde q;G)$ have positive $m_q^u$-measure, one on each side of $\widetilde q$.

Choose a multiple $k$ of the period of $q$ and a lift $Q$ of $g^k$ fixing $\widetilde q$, with $k$ even if necessary so that $Q$ preserves both unstable branches. Backward iterates of compact positive-measure subsets of these two sets converge to $\widetilde q$. Periodic iteration preserves the reference measure class, so every one-sided unstable neighborhood of $\widetilde q$ has positive measure, as claimed.
\end{proof}

The next claim transfers both injectivity and the conclusion of Claim~\ref{claim:unbounded.half.leaves} to the periodic unstable manifold.

\medskip
\begin{claim}\label{claim:Yq}
The measure $m_q^u$ is nonatomic, and there is a set $Y_q$ of full $m_q^u$-measure in $\widetilde W^u(\widetilde q;G)$ on which
$(\widetilde H_g)_s$ is injective. For every $\widetilde y\in Y_q$, the stable manifold $\widetilde W^s(\widetilde y;G)$ meets
$\widetilde W^u(\widetilde q;G)$ transversely at $\widetilde y$, and both of its half-leaves based at $\widetilde y$ have
unbounded images under $\widetilde H_g$.
\end{claim}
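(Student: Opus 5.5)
The plan is to obtain $Y_q$ by transporting, along stable holonomies, the conull set $X$ from Claim~\ref{claim:unbounded.half.leaves} onto $\widetilde W^u(\widetilde q;G)$, using the transfer principle of Section~\ref{subsec:prelim-holonomy}. Nonatomicity of $m_q^u$ needs no new work: Section~\ref{subsec:prelim-holonomy} already records it from positive entropy, since unstable conditionals are nonatomic and the local holonomy comparisons preserve this.

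\emph{Step 1: transversality and unbounded half-leaves.} Take $A=X$, which is invariant, conull, consists of regular points, and has been reduced as in Claim~\ref{claim:unbounded.half.leaves}. Fix $k$ with $g^kq=q$. The transfer statement in Section~\ref{subsec:prelim-holonomy} then gives a $g^k$-invariant set $A_q$ of full $m_q^u$-measure with two properties. Each $y\in A_q$ lies on a stable manifold $W^s(x;g)$ with $x\in X$, and this manifold crosses $W^u(q;g)$ transversely at $y$. Stable manifolds of regular points are either disjoint or coincide, so $W^s(y;g)=W^s(x;g)$.

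We lift everything to $\widetilde W^u(\widetilde q;G)$. The half-leaves of $\widetilde W^s(\widetilde y;G)$ based at $\widetilde y$ contain the corresponding half-leaves based at $\widetilde x$, up to a compact arc between $\widetilde x$ and $\widetilde y$. That arc has bounded image because $\widetilde H_g$ is continuous. Hence both half-leaves at $\widetilde y$ have unbounded $\widetilde H_g$-images, by Claim~\ref{claim:unbounded.half.leaves}. One should check that the two half-leaves at $\widetilde y$ each contain a half-leaf at $\widetilde x$; this holds because $\widetilde x\ne\widetilde y$ sits in one of them, and on a line each half-line at $\widetilde y$ contains a half-line at $\widetilde x$.

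\emph{Step 2: injectivity of $(\widetilde H_g)_s$.} The proof of Claim~\ref{claim:unbounded.half.leaves} shows that $(\widetilde H_g)_u$ is constant on lifted unstable manifolds. So on $\widetilde W^u(\widetilde q;G)$ the map $\widetilde H_g$ is injective wherever $(\widetilde H_g)_s$ is. It therefore suffices to find a full-measure subset on which $\widetilde H_g$ is injective.

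We would transfer leafwise injectivity \eqref{eq:thm43-claims-injectivity} from an unstable leaf of $X$. We choose $Y_q\subset A_q$ so that the second assertion of the transfer applies. Then, for countably many typical unstable intervals $I_j$ through points of $X$ on which $X$ has full conditional measure, the stable manifolds through $Y_q$ meet $I_j$ only at points of $X$. Via the inclination lemma, iterates of $\widetilde W^u(\widetilde q;G)$ under the lift $Q$ of $g^k$ accumulate in $C^1$ on such intervals, since $q\in\operatorname{Ehc}(p)$ and $X$-points are homoclinically connected.

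Now suppose $\widetilde y_1\ne\widetilde y_2\in Y_q$ with $\widetilde H_g(\widetilde y_1)=\widetilde H_g(\widetilde y_2)$. Then $(\widetilde H_g)_s$ agrees at both points, so their stable manifolds have the same $s$-coordinate. Both stable manifolds cross some single lifted interval $I\subset\widetilde W^u(\widetilde x;G)$, $\widetilde x\in\widetilde X$, at points $\widetilde z_1,\widetilde z_2\in\widetilde X$. These crossings are distinct, because the two stable leaves are distinct (they meet $\widetilde W^u(\widetilde q;G)$ at different points and are transverse there). Their images under $\widetilde H_g$ share the $u$-coordinate of $I$ and the common $s$-coordinate, so $\widetilde H_g(\widetilde z_1)=\widetilde H_g(\widetilde z_2)$. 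This contradicts \eqref{eq:thm43-claims-injectivity}.

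\emph{Main obstacle.} The hard step is producing, for a \emph{given} pair $\widetilde y_1,\widetilde y_2$, one common unstable interval through $\widetilde X$ crossed by both stable leaves. I would first handle pairs in a small compact subarc $J\subset\widetilde W^u(\widetilde q;G)$. For such pairs, a single Pesin-block holonomy from $J$ to a nearby typical interval $I$ contains both points in its domain. The countably many holonomies $h_j$ of Section~\ref{subsec:prelim-holonomy} cover each such subarc up to null sets. For pairs that are far apart along $\widetilde W^u(\widetilde q;G)$, I would apply $Q^{-m}$. This contracts the arc between them into a small arc near $\widetilde q$, and the relation $\widetilde H_g\circ Q=\widetilde\Phi^k\circ\widetilde H_g$ preserves coincidence of images. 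This reduces to the local case, using that $Y_q$ is $Q$-invariant (as $A_q$ is $g^k$-invariant) after removing the iterates of the exceptional null sets.
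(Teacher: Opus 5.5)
\textbf{Verdict: there is a genuine gap in Step~2.} Your nonatomicity remark and Step~1 are fine and match the paper; Step~1 covers the transfer of $X$ along stable holonomies and the unbounded half-leaves via the compact joining arc. Step~2 fails at exactly the point you flag as the main obstacle, and the proposed resolution does not work.

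\textbf{Finding a common interval.} You assert that for two points in a small arc $J\subset\widetilde W^u(\widetilde q;G)$, a single Pesin-block holonomy into a nearby typical interval has both points in its domain. This is not justified. The domains of the holonomies $h_j$ are measurable, typically Cantor-like, sets that cover $\widetilde W^u(\widetilde q;G)$ only up to a null set. Each point of $Y_q$ lies in some domain, but an arbitrary pair, however close, need not lie in a common one. Shrinking the arc between them by $Q^{-m}$ changes nothing.

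Your appeal to $Q$-iterates of $\widetilde W^u(\widetilde q;G)$ accumulating on typical intervals does not help either. That manifold is $Q$-invariant, and what must be controlled are the stable leaves through the pair.

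The missing idea is to apply the inclination lemma to those stable arcs themselves:
\begin{enumerate}
\item Since $\mu(\operatorname{Ehc}(q))=1$, fix once and for all a compact interval $I_*$ in a typical unstable manifold, with $I_*\cap\widetilde X$ of full conditional measure, crossing $\widetilde W^s(\widetilde q;G)$ transversely.
\item Build $Y_q$ from the transfer principle with respect to this single $I_*$, and make it $Q$-invariant.
\item Given $\widetilde y\neq\widetilde z$ in $Y_q$, compact stable arcs through them cross $\widetilde W^u(\widetilde q;G)$ transversely. By the inclination lemma for $Q^{-1}$, their $Q^{-n}$-images contain subarcs $C^1$-close to a segment of $\widetilde W^s(\widetilde q;G)$ crossing $I_*$.
\item So for one large $n$ both iterated arcs meet $I_*$ transversely, and $Q$-invariance of $Y_q$ puts both crossing points in $\widetilde X$.
\end{enumerate}

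\textbf{Distinctness of the crossings.} You argue that the two crossing points differ because the stable leaves are distinct, since they meet $\widetilde W^u(\widetilde q;G)$ at different points. That inference is false. A single stable manifold can cross $\widetilde W^u(\widetilde q;G)$ many times.

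This is precisely the case the claim must exclude. After Claim~\ref{claim:stable.unstable.loop}, the claim is applied to $z$ and $\theta(z)$, which lie on the same lifted stable manifold.

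The remedy is to choose \emph{disjoint} compact stable arcs through the two points. Their $Q^{-n}$-images remain disjoint, so the two crossings with $I_*$ are distinct even when both arcs belong to one leaf. Equal $s$-coordinates (preserved under iteration and constant along stable leaves), together with the constant $u$-coordinate on $I_*$, then contradict \eqref{eq:thm43-claims-injectivity}.
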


\begin{proof}[Proof of Claim~\ref{claim:Yq}]
Nonatomicity follows from Section~\ref{subsec:prelim-conditionals}. To obtain injectivity on the whole unstable manifold, we will compare
points on a single typical unstable interval, where \eqref{eq:thm43-claims-injectivity} applies.

Since $q\sim_h p$, Section~\ref{subsec:prelim-homoclinic} gives $\mu(\operatorname{Ehc}(q))=1$. We may therefore choose a typical unstable manifold, on which $X$ has full conditional measure by Section~\ref{subsec:prelim-conditionals}, that meets the stable manifold of the orbit of $q$ transversely. Taking a suitable iterate and lift, choose a compact interval $I_*$ in this unstable manifold which crosses $\widetilde W^s(\widetilde q;G)$ transversely in its interior. Since $X$ is invariant, $I_*\cap\widetilde X$ has full conditional measure in $I_*$.

By the stable-holonomy argument in Section~\ref{subsec:prelim-holonomy}, applied to $A=X$ and the interval $I_*$ in the universal cover, there is
a $Q$-invariant set $Y_q$ of full $m_q^u$-measure in $\widetilde W^u(\widetilde q;G)$ with the following properties. For every $\widetilde y\in Y_q$, the stable manifold through $\widetilde y$ contains a point of $\widetilde X$ and meets $\widetilde W^u(\widetilde q;G)$ transversely at $\widetilde y$.
Moreover, every transverse intersection of this stable manifold with $I_*$ belongs to $\widetilde X$.

Choose $\widetilde x\in\widetilde X$ on the stable manifold through $\widetilde y$. In either direction along this manifold, the half-leaves starting at $\widetilde x$ and $\widetilde y$ differ only by the segment joining these two points. This segment is compact, so its image under
$\widetilde H_g$ is bounded. By Claim~\ref{claim:unbounded.half.leaves}, each half-leaf starting at $\widetilde x$ has unbounded image. Adding or
removing the bounded image of the joining segment cannot change this. Hence both half-leaves starting at
$\widetilde y$ also have unbounded images.

It remains to prove injectivity. Suppose that distinct points $\widetilde y,\widetilde z\in Y_q$ satisfy
\[
  (\widetilde H_g)_s(\widetilde y)=(\widetilde H_g)_s(\widetilde z).
\]
Choose disjoint compact stable arcs through these points. Both arcs cross $\widetilde W^u(\widetilde q;G)$ transversely. The inclination lemma from Section~\ref{subsec:prelim-homoclinic}, applied to $Q^{-1}$, shows that their backward iterates contain subarcs converging in $C^1$ to a compact segment of $\widetilde W^s(\widetilde q;G)$ crossing $I_*$. Consequently, for the same sufficiently large $n$, the two iterated arcs meet $I_*$ transversely at points $a$ and $b$. These points are distinct because the original arcs are disjoint. Since $Q^{-n}\widetilde y,Q^{-n}\widetilde z\in Y_q$, the defining property of $Y_q$ gives $a,b\in\widetilde X$.

The semiconjugacy preserves equality of the $s$-coordinate under iteration. This coordinate is also constant on stable manifolds, as shown in Claim~\ref{claim:unbounded.half.leaves}. Hence
\[
  (\widetilde H_g)_s(a)=(\widetilde H_g)_s(Q^{-n}\widetilde y)=(\widetilde H_g)_s(Q^{-n}\widetilde z)=(\widetilde H_g)_s(b).
\]
The $u$-coordinate is constant on $I_*$, which lies in a single unstable manifold. Thus $\widetilde H_g(a)=\widetilde H_g(b)$, contradicting
\eqref{eq:thm43-claims-injectivity} on $I_*\cap\widetilde X$. This proves the claim.
\end{proof}
We have now reached the geometric part of the argument. Claims~\ref{claim:periodic.saddle.related.to.p} and~\ref{claim:Yq} ensure that neither unstable branch can collapse under projection; the endpoint property forces their images to overlap.

\medskip
\begin{claim}\label{claim:4}
Writing $e=(\widetilde H_g)_s(\widetilde q)$, there are points $a<\widetilde q<b$ on $\widetilde W^u(\widetilde q;G)$ such that
\[
  (\widetilde H_g)_s(a)=(\widetilde H_g)_s(b)=t\ne e.
\]
\end{claim}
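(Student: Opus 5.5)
We will use the endpoint geometry of $q$ together with the connectedness of the image of each unstable branch in the tree $\widehat{\cT}^s$. Since $(\widetilde H_g)_u$ is constant on $\widetilde W^u(\widetilde q;G)$ (Claim~\ref{claim:unbounded.half.leaves}), the restriction of $\widetilde H_g$ to this manifold is encoded by the continuous map $\sigma=(\widetilde H_g)_s$ into $\widehat{\cT}^s$. Let $\beta^\pm$ be the two unstable half-branches at $\widetilde q$. Their images $T^\pm=\sigma(\beta^\pm)$ are connected subsets of an $\mathbb R$-tree containing $e$.

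\textbf{Step 1: both images are nondegenerate.} By Claim~\ref{claim:periodic.saddle.related.to.p}, every one-sided neighborhood of $\widetilde q$ has positive $m_q^u$-measure. Since $m_q^u$ is nonatomic (Claim~\ref{claim:Yq}), each such side contains at least two points of $Y_q$. Injectivity of $\sigma$ on $Y_q$ then shows that $\sigma$ is nonconstant on every one-sided neighborhood $I^\pm$. So $\sigma(I^+)$ and $\sigma(I^-)$ are nondegenerate subtrees containing $e$, even for arbitrarily small $I$.

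\textbf{Step 2: the endpoint property.} Since $q\notin J_g$, the point $H_g(q)$ is a periodic point of $\Phi_g$ outside $L(f)$. By \Cref{subsec:hhu-local-structure} it is an endpoint of its unstable dendrite $CW^u_\varepsilon$. Upstairs, the local unstable set of $\widetilde H_g(\widetilde q)$ in $\widetilde L(g)$ sits inside $\widehat{\cT}^s\times\{e_u\}$, and $\sigma(I)$ (for $I$ small) lies in the component of this set through $\widetilde H_g(\widetilde q)$. The endpoint property says that $e$ is an endpoint of the subtree $\sigma(I)=\sigma(I^-)\cup\sigma(I^+)$. In a tree, two nondegenerate subtrees sharing the point $e$, whose union has $e$ as an endpoint, must both leave $e$ through the same direction. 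Hence there is a nondegenerate arc $[e,s]$ contained in both $\sigma(\overline{I^-})$ and $\sigma(\overline{I^+})$.

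\textbf{Step 3: conclusion.} Pick any $t\in(e,s]$ with $t\neq e$. By connectedness, together with the fact that $\sigma(\overline{I^\pm})$ are subtrees containing both $e$ and $s$ (hence the geodesic $[e,s]$), there are points $a\in I^-$ and $b\in I^+$ with $\sigma(a)=\sigma(b)=t$, and then $a<\widetilde q<b$.

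The main obstacle is Step 2. We must justify that the tree image $\sigma(I)$ is really contained in the dendrite component $CW^u_\varepsilon(H_g(q);\Phi_g)$, lifted homeomorphically, so that the endpoint property of \cite{HHU2004} applies. This needs $I$ small enough that $\widetilde H_g(I)$ stays within the $\varepsilon$-local unstable set, which holds by continuity, since the image is connected and contains $\widetilde H_g(\widetilde q)$. We also need the local isometry of $P$ at scale $\varepsilon$. One then has to check that ``endpoint'' in the dendrite means a single direction at $e$ in the tree.
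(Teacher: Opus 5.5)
Your proposal is correct and follows essentially the paper's argument. Nonconstancy of $(\widetilde H_g)_s$ on each branch comes from injectivity on $Y_q$ together with positive $m_q^u$-mass on both sides of $\widetilde q$, and then the endpoint property forces the two image arcs from $e$ to share a nondegenerate initial segment. The only difference is which endpoint property is used. The paper uses the global form: by the proof of \cite[Theorem~1.4]{HHU2004}, $e\in\widehat{\mathcal T}^s\setminus\mathcal T^s$, so $e$ is an endpoint of the whole completed tree \cite[Lemma~1.1]{Fathi1990}. That makes your localization to small $I$ and the lifting of $CW^u_\varepsilon$ (the obstacle you flagged) unnecessary, although your local version also goes through.
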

\begin{proof}[Proof of Claim \ref{claim:4}]
Since $q\notin J_g$, its periodic image $H_g(q)$ lies outside $L(f)$. By \cite[Theorem~1.4 and its proof]{HHU2004}, as recalled
in Section~\ref{subsec:hhu-local-structure}, the lifted coordinate $e$ belongs to $\widehat{\mathcal T}^s\setminus\mathcal T^s$ and is an
endpoint of the completed tree.

Each unstable branch has positive $m^u_q$-measure by Claims~\ref{claim:periodic.saddle.related.to.p} and~\ref{claim:Yq}. Its image under $(\widetilde H_g)_s$ cannot be constant, because this coordinate is injective on its intersection with $Y_q$. By continuity, the image of each branch together with $\widetilde q$ contains a nondegenerate tree arc starting at $e$.

These two arcs must share a nondegenerate initial segment. Otherwise, their union would be an arc with $e$ in its interior, contrary to the fact that $e$ is an endpoint. A point $t\ne e$ in their intersection has a preimage on each branch, giving $a$ and $b$.
\end{proof}

We now use the fold to produce repeated intersections between the stable and unstable manifolds of $G$. We will construct
a disk whose boundary contains an arc of $\widetilde W^u(\widetilde q;G)$. For a set of positive $m_q^u$-measure on this arc, the stable manifolds cross the arc transversely and avoid the remaining boundary of the disk. By Claim~\ref{claim:Yq}, the stable half-leaves entering the
disk have unbounded images under $\widetilde H_g$, so they cannot remain inside it. They must therefore cross the unstable arc again.

\medskip
\begin{claim}\label{claim:stable.unstable.loop}
There is an unstable interval $I$ of positive $m_q^u$-measure such that, for almost every $z\in I$, the stable manifold
through $z$ topologically crosses $\widetilde W^u(\widetilde q;G)$ at a second point.
\end{claim}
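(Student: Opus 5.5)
The plan is to build the disk from the fold of Claim~\ref{claim:4} and then force stable leaves entering it to exit through the unstable arc. Let $J\subset\widetilde W^u(\widetilde q;G)$ be the compact unstable arc from $a$ to $b$; it contains $\widetilde q$ in its interior. Since $(\widetilde H_g)_s(a)=(\widetilde H_g)_s(b)=t$, the points $a$ and $b$ have the same $s$-coordinate. I would close $J$ into a Jordan curve by adding a second arc $\beta$ from $b$ back to $a$, chosen so that $\widetilde H_g(\beta)$ is bounded and $\beta$ meets $\widetilde W^u(\widetilde q;G)$ only at its endpoints. The existence of such a $\beta$ is not automatic. One option is to connect $a$ and $b$ by a stable-type path; the equality of $s$-coordinates suggests $a$ and $b$ lie in the same $(\widetilde H_g)_s$-fiber, which is a union of stable manifolds up to bounded error. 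If that fails, any embedded arc works, since only compactness of $\beta$ is needed: every compact set has bounded $\widetilde H_g$-image by continuity. So I would take $\beta$ to be any simple arc in $\widetilde M\cong\mathbb R^2$ from $b$ to $a$ with interior disjoint from $J$. The Jordan curve $J\cup\beta$ then bounds a closed disk $\Delta$ in the plane, and $\widetilde H_g(\Delta)$ is bounded.

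Next, choose an interval $I\subset J$ of positive $m_q^u$-measure, say one side of $\widetilde q$, shrunk so that $I$ stays away from $a$, $b$ and $\beta$. Claim~\ref{claim:periodic.saddle.related.to.p} supplies the positive measure. For $z\in I\cap Y_q$, Claim~\ref{claim:Yq} gives that $\widetilde W^s(z;G)$ crosses $J$ transversely at $z$. Hence one of its two half-leaves enters the interior of $\Delta$ immediately. That half-leaf has unbounded $\widetilde H_g$-image, so it cannot stay inside the compact set $\Delta$ and must hit $\partial\Delta=J\cup\beta$.

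The remaining task is to show that, for almost every $z$, the half-leaf exits through $J$ rather than $\beta$, and does so by a topological crossing. Since $\beta$ is compact and stable manifolds are injectively immersed and pairwise disjoint, I would argue that the set of $z\in I\cap Y_q$ whose stable leaf meets $\beta$ before returning to $J$ can be removed. To do this, I would choose $\beta$ more carefully as a curve lying inside a single stable manifold, or a union of finitely many stable arcs and short transversals. Stable leaves through distinct points are disjoint, so only the leaves of the endpoints of these transversals could reach $\beta$; that is a countable, hence $m_q^u$-null, set by nonatomicity. A cleaner alternative is to take $\beta$ so that the stable leaves hitting it correspond, via stable holonomy, to a set of zero Hausdorff dimension in $I$; this set is null by the zero-dimension property in Section~\ref{subsec:prelim-holonomy}. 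Once the exit is through $J$, the half-leaf leaves $\Delta$ at its first hitting point of $J$ after entering. Taking a point where it genuinely passes from $\operatorname{int}\Delta$ to the exterior, which must exist since its image is unbounded, gives a topological crossing of $\widetilde W^u(\widetilde q;G)$ at a second point distinct from $z$. The main obstacle I expect is exactly this step: controlling the second boundary arc $\beta$ so that almost every stable half-leaf avoids it. My first attempt will be to build $\beta$ from the stable leaf through $a$ (or $b$), using that $(\widetilde H_g)_s$ is constant on stable leaves and equals $t$ at both endpoints.
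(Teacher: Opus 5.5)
Your proposal has a genuine gap, and it is the step you flag yourself: nothing in your construction forces the stable half-leaf that enters $\Delta$ to leave through $J$ rather than through $\beta$, and none of the suggested repairs works.

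\begin{itemize}
\item An arbitrary compact $\beta$ only gives boundedness of $\widetilde H_g(\Delta)$. It gives no avoidance.
\item If $\beta$ is built from stable arcs and short transversals, each transversal is crossed by a continuum of stable manifolds. Nothing prevents a positive $m_q^u$-measure family of leaves through $I$ from exiting there, so the claim that only the leaves through the endpoints can reach $\beta$ is false.
\item Building $\beta$ inside the stable manifold through $a$ needs $a$ and $b$ on a common $G$-stable manifold, and $(\widetilde H_g)_s(a)=(\widetilde H_g)_s(b)$ does not give this. Constancy of $(\widetilde H_g)_s$ along stable manifolds only goes one way. Equality of this coordinate only says that the $d_s$-coordinates of the two orbits stay at bounded distance, not that the orbits converge. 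Moreover, $a$ and $b$ need not be Pesin regular.
\item The zero-Hausdorff-dimension alternative has no mechanism behind it.
\end{itemize}

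The missing idea is to replace measure-theoretic avoidance by a metric barrier: a leaf of the pseudo-Anosov stable foliation. This uses constancy of $(\widetilde H_g)_s$ on $G$-stable manifolds in the direction that does hold.
\begin{itemize}
\item Let $C$ be the displacement bound in \eqref{eq:hhu-lifted-semiconjugacy-metric}. The tree map induced by $Q$ fixes $e$ and expands $d_s$ by $\lambda^k$. Replacing the fold points of Claim~\ref{claim:4} by $Q^na,Q^nb$, you may assume $d_s(e,t)>4C$.
\item Choose $v$ in the open tree segment $(e,t)$ representing a nonsingular leaf, with $d_s(e,v)>C$ and $d_s(v,t)>C$, and set $\ell=q_s^{-1}(v)$.
\item The displacement bound places $q_s(\widetilde q)$ on the $e$-side of $v$ and $q_s(a),q_s(b)$ on the $t$-side. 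Hence the arc $[a,b]$ meets $\ell$ on both sides of $\widetilde q$.
\item Let $U$ be the closure of the component of $[a,b]\setminus\ell$ containing $\widetilde q$. Close it with the segment of $\ell$ joining its endpoints, rather than with an arc from $b$ to $a$.
\item Shrink $I\ni\widetilde q$ so that $d_s\bigl((\widetilde H_g)_s(z),v\bigr)>C$ for $z\in I$. This is possible by continuity, since $(\widetilde H_g)_s(\widetilde q)=e$, and $m_q^u(I)>0$ by Claim~\ref{claim:periodic.saddle.related.to.p}.
\item $(\widetilde H_g)_s$ is constant on each $G$-stable manifold, while $(\widetilde H_g)_s(\ell)$ lies within $C$ of $v$. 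So no stable manifold through $I$ can meet $\ell$ at all.
\end{itemize}
With this barrier, your exit argument (unbounded image, so the half-leaf must leave the compact disk) goes through for every $z\in I\cap Y_q$, with no null sets to discard.
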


\begin{proof}[Proof of Claim \ref{claim:stable.unstable.loop}]
Let $C>0$ bound the displacement of $\widetilde H_g$ from $\iota$, as in \eqref{eq:hhu-lifted-semiconjugacy}. The tree action induced by $Q$ fixes $e$
and expands distances in $\widehat{\mathcal T}^s$ by $\lambda^k$. Iterating the fold of Claim~\ref{claim:4} therefore allows us to assume $d_s(e,t)>4C$.
\begin{figure}[htbp]
    \centering
    \begin{tikzpicture}[
  x=1.05cm,y=1.05cm,
  font=\small,
  >=Stealth,
  line cap=round,line join=round
]
  \definecolor{unstableblue}{HTML}{245DB0}
  \definecolor{stablegreen}{HTML}{087F72}
  \definecolor{intervalorange}{HTML}{D67818}
  \definecolor{leafgrey}{HTML}{747C87}
  \definecolor{diskfill}{HTML}{EAF0F7}
  \pgfmathsetmacro{\r}{sqrt(5.6)}

  \fill[diskfill]
    (-\r,2.8) -- (\r,2.8)
    -- plot[domain=\r:-\r,samples=101] (\x,{0.5*\x*\x})
    -- cycle;
  \node[font=\Large,text=leafgrey] at (-0.6,1.98) {$D$};

  \draw[leafgrey,<->,line width=0.8pt] (-4,2.8) -- (4,2.8);
  \draw[leafgrey,line width=1.4pt] (-\r,2.8) -- (\r,2.8);
  \node[leafgrey,anchor=south east] at (-2.87,2.94)
    {$\ell=q_s^{-1}(v)$};

  \draw[unstableblue,line width=0.8pt]
    plot[domain=-2.94:2.94,samples=101] (\x,{0.5*\x*\x});
  \draw[unstableblue,line width=1.8pt]
    plot[domain=-\r:\r,samples=101] (\x,{0.5*\x*\x});
  \fill[unstableblue] (-2.9,4.205) circle (1.7pt)
    node[above=4pt] {$a$};
  \fill[unstableblue] (2.9,4.205) circle (1.7pt)
    node[above=4pt] {$b$};
  \fill[leafgrey] (-\r,2.8) circle (1.5pt);
  \fill[leafgrey] (\r,2.8) circle (1.5pt);
  \node[unstableblue,anchor=east] at (-2.13,1.43)
    {$U\subset\widetilde W^u(\widetilde q;G)$};
  \draw[unstableblue,thin] (-2.06,1.51) -- (-1.84,1.693);

  \draw[intervalorange,line width=3.5pt]
    plot[domain=-0.73:0.66,samples=41] (\x,{0.5*\x*\x});
  \fill (0,0) circle (2pt);
  \node[anchor=north west] at (0.04,-0.17) {$\widetilde q$};
  \node[intervalorange,anchor=north] at (0.88,-0.44) {$I$};
  \draw[intervalorange,thin] (0.83,-0.36) -- (0.56,0.1568);

  \draw[stablegreen,line width=1.7pt]
    plot[domain=-1.35:3.92,samples=101]
      (\x,{-0.15*\x*\x+0.91*\x+0.468});
  \foreach \lo/\hi in {-0.83/-0.61,0.78/1.00,3.56/3.90}{
    \draw[stablegreen,line width=1.3pt,->]
      plot[domain=\lo:\hi,samples=12]
        (\x,{-0.15*\x*\x+0.91*\x+0.468});
  }
  \node[stablegreen,anchor=south] at (3.22,2.02)
    {$\widetilde W^s(z;G)$};

  \filldraw[fill=stablegreen,draw=white,line width=0.7pt]
    (-0.4,0.08) circle (2.3pt);
  \node[stablegreen,anchor=east] at (-1.32,-0.48)
    {$z\in I\cap Y_q$};
  \draw[stablegreen,thin] (-1.23,-0.43) -- (-0.49,-0.005);

  \filldraw[fill=stablegreen,draw=white,line width=0.7pt]
    (1.8,1.62) circle (2.3pt);
  \node[stablegreen,anchor=north] at (3.35,0.79)
    {second crossing};
  \draw[stablegreen,thin] (3.23,0.89) -- (1.88,1.57);
\end{tikzpicture}
    \caption{Claim \ref{claim:stable.unstable.loop}}
    \label{fig:claim.5}
\end{figure}

In the interior of the tree segment $[e,t]$, choose a point $v$ representing a nonsingular stable leaf, with
\[
  d_s(e,v)>C,\qquad d_s(v,t)>C.
\]
The interior of this segment lies in $\mathcal T^s$, and only countably many of its points represent singular leaves, so
such a choice is possible. Put $\ell=q_s^{-1}(v)$. The displacement bound gives
\[
  d_s(q_s(\widetilde q),e)\leq C,\qquad d_s(q_s(a),t)\leq C,\qquad d_s(q_s(b),t)\leq C.
\]
Thus $q_s(\widetilde q)$ lies on the $e$-side of $v$ in the tree, while $q_s(a)$ and $q_s(b)$ lie on the $t$-side. Every path from $\widetilde q$ to $a$ or $b$ must meet $\ell$, because its $q_s$-image must pass through $v$.

Let $U$ be the closure of the component of the unstable arc $[a,b]\setminus\ell$ containing $\widetilde q$. The arc $U$ and the segment of $\ell$ joining its endpoints bound a Jordan disk $D$. Choose a small interval $I\subset\operatorname{int}U$ about $\widetilde q$ such that
\[
  d_s\bigl((\widetilde H_g)_s(z),v\bigr)>C
  \qquad(z\in I).
\]
Continuity gives this inequality, and Claim~\ref{claim:periodic.saddle.related.to.p} gives $m_q^u(I)>0$.

A stable manifold through $I$ cannot meet $\ell$. Indeed, $(\widetilde H_g)_s$ is constant on that manifold, whereas the displacement bound places $(\widetilde H_g)_s(\ell)$ within distance $C$ of $v$.

For $z\in I\cap Y_q$, Claim~\ref{claim:Yq} gives a stable manifold transverse to $I$, with unbounded images on both sides. One half-leaf
enters $D$. It cannot stay there, since the image of the compact disk $D$ is bounded. It must leave through $U$, as it cannot meet $\ell$. This gives the second topological crossing. Since $Y_q$ is conull, the claim follows. See Figure \ref{fig:claim.5}.
\end{proof}
\begin{figure}[htbp]
    \centering
\begin{tikzpicture}[
  x=1.13cm,y=1.13cm,
  font=\small,
  >=Stealth,
  line cap=round,line join=round
]
  \definecolor{unstableblue}{HTML}{245DB0}
  \definecolor{stablegreen}{HTML}{087F72}
  \definecolor{intervalorange}{HTML}{D67818}
  \definecolor{annotationgrey}{HTML}{626B76}

  \draw[unstableblue,line width=1.5pt,<->]
    plot[domain=-2.45:2.52,samples=101]
      (\x,{0.5*\x*\x});
  \node[unstableblue,anchor=east] at (-2.28,2.22)
    {$\widetilde W^u(\widetilde q;G)$};
  \draw[unstableblue,thin] (-2.22,2.28) -- (-2.13,2.269);
  \fill (0,0) circle (1.8pt);
  \node[anchor=north west] at (0.06,-0.12) {$\widetilde q$};

  \draw[intervalorange,line width=3.6pt]
    plot[domain=-0.66:-0.14,samples=25]
      (\x,{0.5*\x*\x});
  \draw[intervalorange,line width=3.6pt]
    plot[domain=1.55:2.06,samples=25]
      (\x,{0.5*\x*\x});
  \node[intervalorange,anchor=north] at (-0.53,-0.64) {$I_1$};
  \draw[intervalorange,thin] (-0.53,-0.56) -- (-0.62,0.1922);
  \node[intervalorange,anchor=west] at (2.30,2.10) {$I_2$};
  \draw[intervalorange,thin] (2.24,2.10) -- (2.01,2.0201);

  \foreach \c in {-0.20,-0.10,0.10,0.20}{
    \draw[stablegreen,opacity=0.34,line width=0.7pt]
      plot[domain=-0.89:3.03,samples=80]
        (\x,{-0.15*\x*\x+0.91*\x+0.468+\c});
    \pgfmathsetmacro{\xl}{(0.91-sqrt(2.0449+2.6*\c))/1.3}
    \pgfmathsetmacro{\xr}{(0.91+sqrt(2.0449+2.6*\c))/1.3}
    \fill[stablegreen,opacity=0.60]
      (\xl,{0.5*\xl*\xl}) circle (1.15pt);
    \fill[stablegreen,opacity=0.60]
      (\xr,{0.5*\xr*\xr}) circle (1.15pt);
  }

  \draw[stablegreen,line width=1.65pt]
    plot[domain=-1.05:3.23,samples=90]
      (\x,{-0.15*\x*\x+0.91*\x+0.468});
  \draw[stablegreen,line width=1.65pt,->]
    plot[domain=0.62:1.05,samples=15]
      (\x,{-0.15*\x*\x+0.91*\x+0.468});
  \node[stablegreen,anchor=south east] at (0.55,1.17) {$\theta$};
  \node[stablegreen,anchor=north west] at (2.48,1.40)
    {$\widetilde W^s(z;G)$};

  \filldraw[fill=stablegreen,draw=white,line width=0.7pt]
    (-0.4,0.08) circle (2.4pt);
  \filldraw[fill=stablegreen,draw=white,line width=0.7pt]
    (1.8,1.62) circle (2.4pt);

  \node[stablegreen,anchor=east] at (-1.49,-0.21)
    {$z\in B_*\subset Y_q$};
  \draw[stablegreen,thin] (-1.41,-0.17) -- (-0.49,0.06);
  \node[stablegreen,anchor=west] at (2.51,0.63)
    {$\theta(z)\in Y_q$};
  \draw[stablegreen,thin] (2.46,0.71) -- (1.87,1.55);

  \node[anchor=south] at (0.3,3.43)
    {$\theta:B\subset I_1\longrightarrow I_2$
     \quad (stable holonomy)};
  \node[align=center] at (0.25,-1.37)
    {$B_*=B\cap Y_q\cap\theta^{-1}(Y_q),$
     \qquad $m_q^u(B_*)=m_q^u(B)>0.$};
  \node[align=center] at (0.25,-1.98)
    {$(\widetilde H_g)_s(z)=(\widetilde H_g)_s(\theta(z)),$
     \qquad $z\ne\theta(z),\quad z,\theta(z)\in Y_q.$};
\end{tikzpicture}

    \caption{Contradiction to injectivity of $(\widetilde H_g)_s$ on $Y_q$.}
    \label{fig:thm.entropy.bound}
\end{figure}

Claim~\ref{claim:stable.unstable.loop} supplies the two crossings; it remains to choose them in the set of injectivity. Let $E=I\cap Y_q$, which has positive $m_q^u$-measure. By Claim~\ref{claim:Yq}, the stable manifold through each $z\in E$ meets $\widetilde W^u(\widetilde q;G)$ transversely at $z$.
By Claim~\ref{claim:stable.unstable.loop}, it has a second topologically transverse intersection with this same unstable manifold.

The HHTU Sard argument, as used in \cite[Proposition~4.5]{CHHU2026}, shows that the exceptional subset of $E$ whose stable manifolds have a nontransverse second intersection has zero transverse Hausdorff dimension. Since $m_q^u$ gives zero mass to such sets, there is a subset
$B\subset E$ of positive $m_q^u$-measure for which the second intersections are transverse.

After restricting $B$ while retaining positive measure, these second intersections define a local stable holonomy
\[
  \theta:B\subset I_1\longrightarrow I_2,
\]
where $I_1$ and $I_2$ are disjoint subarcs of $\widetilde W^u(\widetilde q;G)$. Both intersections defining $\theta$ are transverse. Ben Ovadia's absolute continuity result \cite[Theorems~5.7 and~6.1]{BO2023} therefore implies that $\theta$ preserves the corresponding leaf measure
classes and, in particular,
\[
  m_q^u(\theta(B))>0.
\]
Since $Y_q$ is conull, we can therefore require both ends of a holonomy pair to belong to $Y_q$:
\[
  B_*:=B\cap Y_q\cap\theta^{-1}(Y_q),
  \qquad m_q^u(B\setminus B_*)=0.
\]
For $z\in B_*$, the distinct points $z$ and $\theta(z)$ lie on the same lifted stable manifold, so
\[
  (\widetilde H_g)_s(z)=(\widetilde H_g)_s(\theta(z)).
\]
Both points belong to $Y_q$, contradicting Claim~\ref{claim:Yq}. See Figure \ref{fig:thm.entropy.bound}.
\end{proof}

\begin{corollary}\label{cor:entropy.minimization}
If $g$ is a $C^\infty$ diffeomorphism isotopic to $f$, then
\[
  h_N(g)=h_{\mathrm{top}}(g) \quad\Longleftrightarrow\quad h_{\mathrm{top}}(g)=h_{\mathrm{top}}(f).
\]
In particular, whenever $h_{\mathrm{top}}(g)>h_{\mathrm{top}}(f)$,
\[
  h_{\mathrm{top}}(f) \leq h_N(g)=h_{\mathrm{top}}(\Phi_g)<h_{\mathrm{top}}(g).
\]
\end{corollary}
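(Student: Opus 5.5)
The corollary should follow by combining \Cref{thm:entropy.formula}, \Cref{thm:strict-entropy-gap}, and the basic chain of inequalities
\[
  h_{\mathrm{top}}(f)=h_N(f)\leq h_N(g)\leq h_{\mathrm{top}}(g)
\]
recalled in the introduction. Since a $C^\infty$ diffeomorphism is in particular $C^{1+\alpha}$, \Cref{thm:entropy.formula} applies and gives $h_N(g)=h_{\mathrm{top}}(\Phi_g)$. Every statement in the corollary can therefore be read in terms of $h_{\mathrm{top}}(\Phi_g)$.

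\emph{The direction ``$\Leftarrow$''.} Assume $h_{\mathrm{top}}(g)=h_{\mathrm{top}}(f)$. The chain above squeezes $h_N(g)$ between $h_{\mathrm{top}}(f)$ and $h_{\mathrm{top}}(g)=h_{\mathrm{top}}(f)$. Hence $h_N(g)=h_{\mathrm{top}}(g)$. This step needs neither theorem.

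\emph{The direction ``$\Rightarrow$''.} I would argue by contrapositive, which also proves the ``in particular'' statement. Suppose $h_{\mathrm{top}}(g)>h_{\mathrm{top}}(f)$. We always have $h_{\mathrm{top}}(\Phi_g)=h_N(g)\geq h_{\mathrm{top}}(f)$, so there are two cases.

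If $h_{\mathrm{top}}(\Phi_g)=h_{\mathrm{top}}(f)$, then $h_N(g)=h_{\mathrm{top}}(f)<h_{\mathrm{top}}(g)$.

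If $h_{\mathrm{top}}(\Phi_g)>h_{\mathrm{top}}(f)$, then \Cref{thm:strict-entropy-gap} gives $h_{\mathrm{top}}(\Phi_g)<h_{\mathrm{top}}(g)$. Hence again $h_N(g)<h_{\mathrm{top}}(g)$.

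In both cases we obtain $h_{\mathrm{top}}(f)\leq h_N(g)=h_{\mathrm{top}}(\Phi_g)<h_{\mathrm{top}}(g)$. This is the displayed inequality, and it shows that $h_N(g)\neq h_{\mathrm{top}}(g)$.

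There is no real obstacle; the substance lies in the two theorems. The only point to check is the lower bound $h_{\mathrm{top}}(f)\leq h_N(g)$. I would take it from the introduction's basic bounds. If a direct justification is wanted, note that $L(f)\subset L(g)$ gives $h_{\mathrm{top}}(\Phi_g)\geq h_{\mathrm{top}}(\Phi|_{L(f)})=h_{\mathrm{top}}(f)$, and then use \Cref{thm:entropy.formula}.
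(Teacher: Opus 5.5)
Your proposal is correct and is essentially the paper's proof. You squeeze with $h_{\mathrm{top}}(f)\le h_N(g)\le h_{\mathrm{top}}(g)$ for ``$\Leftarrow$'' and combine \Cref{thm:entropy.formula} with \Cref{thm:strict-entropy-gap} for ``$\Rightarrow$''; your contrapositive case split on whether $h_{\mathrm{top}}(\Phi_g)$ equals or exceeds $h_{\mathrm{top}}(f)$ is a rearrangement of the paper's argument by contradiction, and it gives the final displayed inequality directly.
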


\begin{proof}
If $h_{\mathrm{top}}(g)=h_{\mathrm{top}}(f)$, the bounds
\[
  h_{\mathrm{top}}(f)\leq h_N(g)\leq h_{\mathrm{top}}(g)
\]
give equality. Conversely, suppose that $h_N(g)=h_{\mathrm{top}}(g)$. By \Cref{thm:entropy.formula},
\[
  h_{\mathrm{top}}(\Phi_g)=h_{\mathrm{top}}(g).
\]
If this common value exceeded $h_{\mathrm{top}}(f)$, \Cref{thm:strict-entropy-gap} would give a contradiction.
Hence $h_{\mathrm{top}}(g)=h_{\mathrm{top}}(f)$. The final assertion follows from \Cref{thm:entropy.formula} and the same entropy bounds.
\end{proof}

\subsection{Periodic-point growth and topological entropy}
\label{sec:kaloshin-bowen}

The entropy formula for Nielsen classes does not extend to counting individual periodic points. Indeed, every pseudo-Anosov isotopy
class contains a $C^\infty$ diffeomorphism $g$ for which $\operatorname{Fix}(g^n)$ is finite for every $n\ge1$, but
\[
  h_{\mathrm{top}}(g)<\limsup_{n\to\infty}\frac1n\log\max\{1,\#\operatorname{Fix}(g^n)\}=+\infty.
\]

To see this, first create a dissipative quadratic homoclinic tangency by a local modification of a smooth representative. This does not change its isotopy class. Kaloshin proved that, after a perturbation, there is an open set of diffeomorphisms $\mathcal U$ such that, for a residual subset $\mathcal R\subset U$, the number of isolated periodic points grows faster than any prescribed sequence. More precisely, if $P_n(g)$ denotes the number of isolated fixed points of $g^n$, then, for every sequence $(a_n)$ of positive
integers, the condition
\[
  \limsup_{n\to\infty}\frac{P_n(g)}{a_n}=+\infty
\]
holds for every $g \in \mathcal R$ \cite[Theorem~3]{Kaloshin2000}. This conclusion also holds in the $C^\infty$ topology
\cite[Introduction, comment~(i)]{Burguet2020}.

Apply this result with $a_n=\lceil e^{n^2}\rceil$ and choose $g$ in the intersection of this residual set with the
Kupka--Smale residual set. All fixed points of each iterate are then hyperbolic and hence isolated; compactness gives
\[
  \#\operatorname{Fix}(g^n)=P_n(g)<\infty.
\]
For infinitely many $n$, this number exceeds $e^{n^2}$, so its exponential growth rate is infinite. In contrast,
$h_{\mathrm{top}}(g)$ is finite because $g$ is Lipschitz on the compact surface. See \cite[Theorem 3.2.9]{KatokHasselblatt1995}.

\section{Continuity of the quotient entropy}
\label{sec:quotient-entropy-continuity}
\begin{proof}[Proof of \Cref{thm:quotient-entropy-continuity}]
\setcounter{claim}{0}
Let $g_j\to g$ in the $C^0$ topology. By \eqref{eq:prelim-quotient-continuity}, the maps $H_{g_j}$ converge uniformly to $H_g$, and $L(g_j)$ converges to $L(g)$ in the Hausdorff topology. As recalled in \Cref{subsec:prelim-continuity}, item~(i),
\[
  K=L(g)\cup\bigcup_{j\geq1}L(g_j)
\]
is a compact $\Phi$-invariant set on which $\Phi$ is expansive. We shall compare all the quotient systems inside this fixed set.

\begin{claim}\label{claim:continuity-upper}
One has
\[
  \limsup_{j\to\infty}\htop(\Phi_{g_j})\leq\htop(\Phi_g).
\]
\end{claim}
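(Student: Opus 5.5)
We work entirely inside the fixed compact $\Phi$-invariant set $K$, where $\Phi$ is expansive with a single constant $\delta>0$, and compute every entropy by the Adler--Konheim--McAndrew formula \eqref{eq:prelim-cover-entropy} with one common cover. Fix a finite open cover $\mathcal U$ of $K$ with mesh smaller than $\delta$, and write $\mathcal U^{(n)}$ for its $n$-fold join under $\Phi$. Since all the sets $L(g_j)$ and $L(g)$ are compact $\Phi$-invariant subsets of $K$, \eqref{eq:prelim-cover-entropy} gives
\[
  \htop(\Phi_{g_j})=\inf_{n\geq1}\frac1n\log N\bigl(\mathcal U^{(n)}|_{L(g_j)}\bigr),
  \qquad
  \htop(\Phi_g)=\inf_{n\geq1}\frac1n\log N\bigl(\mathcal U^{(n)}|_{L(g)}\bigr).
\]
The point is that a single cover, independent of $j$, computes every quotient entropy.

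Fix $\epsilon>0$ and choose $n$ such that $\frac1n\log N(\mathcal U^{(n)}|_{L(g)})<\htop(\Phi_g)+\epsilon$. Select a subfamily $\mathcal V\subset\mathcal U^{(n)}$ of cardinality $N(\mathcal U^{(n)}|_{L(g)})$ that covers $L(g)$. Its union $W$ is an open set containing the compact set $L(g)$, so $W$ contains a metric neighborhood of $L(g)$ in $K$. By the Hausdorff convergence $L(g_j)\to L(g)$ in \eqref{eq:prelim-quotient-continuity}, we have $L(g_j)\subset W$ for all large $j$. Hence $\mathcal V$ also covers $L(g_j)$, which gives $N(\mathcal U^{(n)}|_{L(g_j)})\leq N(\mathcal U^{(n)}|_{L(g)})$. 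Using the infimum form of the formula for $L(g_j)$, we obtain
\[
  \htop(\Phi_{g_j})\leq\frac1n\log N\bigl(\mathcal U^{(n)}|_{L(g_j)}\bigr)<\htop(\Phi_g)+\epsilon
\]
for all large $j$. Taking $\limsup_j$ and then letting $\epsilon\to0$ proves the claim.

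The only delicate point is justifying that one cover of mesh below $\delta$ computes the entropy of \emph{every} invariant subset of $K$, so that the bound for each fixed $j$ comes from the same $n$. This is exactly why the paper arranged a uniform expansivity constant on $K$, using item (i) of \Cref{subsec:prelim-continuity}. Given that uniformity, expansivity makes $\mathcal U$ a generator for each restriction, which yields both the limit and the infimum forms of \eqref{eq:prelim-cover-entropy}. Subadditivity of $n\mapsto\log N(\mathcal U^{(n)}|_A)$ then justifies the infimum form.
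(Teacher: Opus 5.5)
Your proposal is correct and follows the paper's argument: one cover of $K$ with mesh below the uniform expansivity constant, and a minimal subcover of $\mathcal U^{(n)}$ for $L(g)$ whose union is an open neighborhood containing $L(g_j)$ for large $j$ by Hausdorff convergence. The bound then comes from the infimum form of \eqref{eq:prelim-cover-entropy}. The only difference is cosmetic: you fix $\epsilon$ and choose $n$ first, whereas the paper fixes $m$, takes $\limsup_j$, and then takes the infimum over $m$.
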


\begin{proof}[Proof of Claim \ref{claim:continuity-upper}]
Choose a finite open cover $\mathcal U$ of $K$ whose mesh is smaller than an expansivity constant of $\Phi|_K$. For a fixed $m\geq1$, let $\mathcal U^{(m)}$ denote its refinement along the first $m$ iterates, as in \Cref{subsec:prelim-continuity}, item~(ii). A minimal subfamily of $\mathcal U^{(m)}$ covering $L(g)$ also covers an open neighborhood of $L(g)$ in $K$. Hausdorff convergence therefore gives
\[
  N(\mathcal U^{(m)}|_{L(g_j)})\leq N(\mathcal U^{(m)}|_{L(g)})
\]
for all sufficiently large $j$. Applying \eqref{eq:prelim-cover-entropy}, we obtain
\[
  \limsup_{j\to\infty}\htop(\Phi_{g_j})\leq\frac1m\log N(\mathcal U^{(m)}|_{L(g)}).
\]
Taking the infimum over $m$ and using the same formula for $L(g)$ proves the claim.
\end{proof}

Assume now that $g$ is $C^{1+\alpha}$, and fix $0<a<\htop(\Phi_g)$. To obtain the reverse inequality, we will find a horseshoe whose image in the quotient has entropy larger than $a$ and remains present under perturbation.

\begin{claim}\label{claim:continuity-horseshoe}
There are $n\geq1$ and a locally maximal hyperbolic horseshoe $\Lambda$ for $g^n$ such that $H_g|_\Lambda$ is injective and
\[
  \frac1n\htop(g^n|_\Lambda)>a.
\]
\end{claim}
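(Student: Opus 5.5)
We follow the strategy announced in \Cref{subsec:prelim-continuity}, items~(iii) and~(iv). Since $a<\htop(\Phi_g)$, the variational principle gives an ergodic $\Phi_g$-invariant measure $\nu$ with $h_\nu(\Phi_g)>a$. Fix $\epsilon>0$ with $h_\nu(\Phi_g)-2\epsilon>a$. By \Cref{subsec:prelim-pesin}, $\nu$ has an ergodic hyperbolic lift $\mu$. Choose $r>0$ and $A\subset L(g)$ as in \eqref{eq:prelim-uniform-bowen-bound}. By uniform continuity of $H_g$, pick $\eta>0$ with $d_M(x,y)<\eta\Rightarrow d(H_g x,H_g y)<r/3$. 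Next apply the regular-rectangle construction of item~(iv), with accuracy $\eta$ and $A_0=H_g^{-1}(A)$. This gives a rectangle $R$ and a set $B\subset H_g^{-1}(A)$ of positive measure in a Pesin block, and both are fixed before any return time is chosen.

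Lemma~\ref{lem:prelim-separated-returns}, applied to $B$, yields arbitrarily large $n$ and sets $E_n\subset B\cap g^{-n}B$ with $\#E_n\ge c_B e^{n(h_\nu(\Phi_g)-\epsilon)}$ and $H_g(E_n)$ $(n,r)$-separated. For each $x\in E_n$ take the vertical strip $V_x$ from item~(iv), so $\operatorname{diam}g^iV_x<\eta$ for $0\le i\le n$. We need the strips to be pairwise disjoint. If $V_x\cap V_y\neq\varnothing$, some point is $\eta$-close to both orbit segments, which would give $d_n^{\Phi_g}(H_gx,H_gy)<2r/3$; this contradicts separation. Hence distinct strips are disjoint. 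Let $N=\#E_n$ and define $\Lambda$ by \eqref{eq:prelim-return-horseshoe}. It is a locally maximal hyperbolic horseshoe for $g^n$ with $\htop(g^n|_\Lambda)=\log N$. Therefore
\[
\tfrac1n\htop(g^n|_\Lambda)\ge h_\nu(\Phi_g)-\epsilon+\tfrac1n\log c_B>a
\]
once $n$ is large.

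Injectivity of $H_g$ on $\Lambda$ is the delicate step. Let $w\ne w'\in\Lambda$. Their itineraries differ at some index $k$, with $g^{kn}w\in V_x$ and $g^{kn}w'\in V_y$ for some $x\ne y$. By the orbit estimate, for $0\le i<n$,
\[
d(\Phi_g^{kn+i}H_gw,\Phi_g^iH_gx)<r/3,\qquad d(\Phi_g^{kn+i}H_gw',\Phi_g^iH_gy)<r/3 .
\]
Separation provides some $i$ with $d(\Phi_g^iH_gx,\Phi_g^iH_gy)\ge r$. At that $i$, the points $\Phi_g^{kn+i}H_gw$ and $\Phi_g^{kn+i}H_gw'$ are at distance more than $r/3$, so $H_gw\ne H_gw'$.

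The main obstacle is making sure the strips really satisfy the orbit estimate \eqref{eq:prelim-return-accuracy} at accuracy $\eta$. The other requirement is that the rectangle and $B$ can be chosen inside $H_g^{-1}(A)$ before $r$-separation is applied, since the order of choices ($r$, then $\eta$, then $R$ and $B$, then $n$) is what makes the argument close. If disjointness fails for near-coincident strips, I would first pass to a maximal subfamily of $E_n$ with pairwise disjoint strips. By the argument above this loses nothing, since overlapping strips force a contradiction with separation.
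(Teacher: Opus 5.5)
Your proposal is correct and matches the paper's proof step for step: the choices are made in the same order ($\epsilon$, then $r$ and $A$, then $\eta$, then $R$ and $B$, then $n$), the same Bowen-distance triangle-inequality argument gives both disjointness of the strips and injectivity of $H_g$ on $\Lambda$, and the entropy count $\frac1n\htop(g^n|_\Lambda)=\frac1n\log\#E_n>a$ is the same. The fallback in your last paragraph (passing to a subfamily of disjoint strips) is unnecessary, since you have already shown that overlapping strips cannot occur.
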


\begin{proof}[Proof of Claim \ref{claim:continuity-horseshoe}]
By the variational principle we can choose an ergodic $\Phi_g$-invariant probability measure $\nu$ with $h_\nu(\Phi_g)>a$. Let $\mu$ be an ergodic invariant lift under $H_g$. 

Choose $\epsilon>0$ so that $h_\nu(\Phi_g)-2\epsilon>a$. The Brin--Katok estimate \eqref{eq:prelim-uniform-bowen-bound} supplies a radius $r>0$
and a measurable set $A\subset L(g)$ of positive $\nu$-measure on which the Bowen-ball estimate holds uniformly for all
sufficiently large $n$. We shall choose our return points in $H_g^{-1}(A)$, so that the same estimate applies to
each of their images in the quotient.

Choose $\eta>0$ so small that
\[
  d_M(y,z)<\eta\quad\Longrightarrow\quad d(H_g(y),H_g(z))<r/3.
\]
Apply the return-rectangle construction in \Cref{subsec:prelim-continuity}, item~(iv), to the measure
$\mu$, this value of $\eta$, and the positive-measure set $H_g^{-1}(A)$. It provides a rectangle $R$ and a measurable set $B\subset H_g^{-1}(A)\cap R$ with $\mu(B)>0$.

Applying \eqref{lem:prelim-separated-returns} to $B$, we get arbitrarily large $n$ and finite sets $E_n\subset B\cap g^{-n}B$ such that $H_g(E_n)$ is $(n,r)$-separated, $H_g$ is injective on $E_n$, and
\[
  \#E_n\geq c_Be^{n(h_\nu(\Phi_g)-\epsilon)}.
\]
Choose such an $n$ large enough for the return-rectangle construction and for
\[
  \frac1n\log\#E_n>a.
\]

For each $x\in E_n$, let $V_x$ be the corresponding vertical return strip. By \eqref{eq:prelim-return-accuracy} and the
choice of $\eta$, every $y\in V_x$ satisfies 
\begin{equation}\label{eq:thm51-quotient-accuracy}
  d_n^{\Phi_g}(H_g(y),H_g(x))<r/3,
\end{equation}
where $d_n^{\Phi_g}$ is the Bowen distance defined in \Cref{subsec:prelim-continuity}, item~(iii). These strips are pairwise disjoint: a point in
$V_x\cap V_{x'}$, with $x\ne x'$, would place $H_g(x)$ and $H_g(x')$ at Bowen distance less than $2r/3$, contradicting
their separation.

The Katok--Mendoza construction \eqref{eq:prelim-return-horseshoe} therefore gives a horseshoe $\Lambda$ for $g^n$, whose points are uniquely determined
by their bi-infinite itineraries among the strips $V_x$. We check that $H_g$ distinguishes these points as well. If $y,z\in\Lambda$ are distinct, their itineraries differ at some $k\in\mathbb Z$. Thus $g^{kn}y\in V_x$ and $g^{kn}z\in V_{x'}$ for distinct $x,x'\in E_n$.
The separation of $H_g(E_n)$ and \eqref{eq:thm51-quotient-accuracy} imply
\[
  d_n^{\Phi_g}\bigl(H_g(g^{kn}y),H_g(g^{kn}z)\bigr)>r/3.
\]
If $H_g(y)=H_g(z)$, the semiconjugacy identity would make this distance zero. Hence $H_g|_\Lambda$ is injective.
Finally, \eqref{eq:prelim-horseshoe-entropy} gives 
\[
  \frac1n\htop(g^n|_\Lambda)
  =\frac1n\log\#E_n>a.
\]
\end{proof}

\begin{claim}\label{claim:continuity-persistent-image}
For all sufficiently large $j$, there is a compact $g_j^n$-invariant set $\Lambda_j$ such that
\[
  H_{g_j}(\Lambda_j)=H_g(\Lambda).
\]
\end{claim}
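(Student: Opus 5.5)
We apply Nitecki's semistability, item~(v) of \Cref{subsec:prelim-continuity}, to the horseshoe $\Lambda$ of Claim~\ref{claim:continuity-horseshoe} with $T=g^n$ and $T_j=g_j^n$. Since $g_j\to g$ uniformly, $g_j^n\to g^n$ uniformly. For all large $j$ this gives compact $g_j^n$-invariant sets $\Lambda_j$ and continuous surjections $r_j\colon\Lambda_j\to\Lambda$ with $r_j\circ g_j^n=g^n\circ r_j$ and $\sup_{z\in\Lambda_j}d_M(z,r_j(z))\to0$. Our goal is to show that
\[
H_{g_j}|_{\Lambda_j}=H_g\circ r_j .
\]
Surjectivity of $r_j$ then yields $H_{g_j}(\Lambda_j)=H_g(r_j(\Lambda_j))=H_g(\Lambda)$.

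To prove the identity, we compare two semiconjugacies from $(\Lambda_j,g_j^n)$ into $(K,\Phi^n)$, namely $a=H_{g_j}|_{\Lambda_j}$ and $b=H_g\circ r_j$. The first satisfies $a\circ g_j^n=\Phi^n\circ a$ because $H_{g_j}\circ g_j=\Phi\circ H_{g_j}$. The second satisfies
\[
b\circ g_j^n=H_g\circ g^n\circ r_j=\Phi^n\circ b .
\]
Both maps take values in the compact $\Phi$-invariant set $K$ of item~(i), on which $\Phi$, and hence $\Phi^n$, is expansive with a uniform constant $\delta_n>0$. By the uniqueness remark in item~(ii), it suffices to show that $\sup_{z\in\Lambda_j}d(a(z),b(z))<\delta_n$ for large $j$.

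This uniform closeness is the only real step. We split the distance as
\[
d\bigl(H_{g_j}(z),H_g(r_jz)\bigr)\le d\bigl(H_{g_j}(z),H_g(z)\bigr)+d\bigl(H_g(z),H_g(r_jz)\bigr).
\]
The first term tends to zero uniformly in $z\in M$ by the uniform convergence $H_{g_j}\to H_g$ in \eqref{eq:prelim-quotient-continuity}. The second tends to zero uniformly by uniform continuity of $H_g$ on the compact surface together with $d_M(z,r_jz)\to0$. Hence for all sufficiently large $j$ the supremum is below $\delta_n$, and expansivity forces $a=b$.

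The main point needing care is that the expansivity constant is taken for $\Phi^n$ on the common compact set $K$ rather than on each $L(g_j)$ separately. Only this uniform constant lets the uniqueness argument work for all large $j$ simultaneously. No differentiability of $g_j$ enters, since semistability and the convergence of $H_{g_j}$ are purely $C^0$ statements.
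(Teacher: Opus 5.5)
Your proposal is correct and matches the paper's proof essentially step for step: Nitecki semistability supplies $\Lambda_j$ and $r_j$, uniform convergence of $H_{g_j}$ together with uniform continuity of $H_g$ makes $H_{g_j}|_{\Lambda_j}$ and $H_g\circ r_j$ uniformly close, the expansivity-based uniqueness of semiconjugacies into $K$ forces them to coincide, and surjectivity of $r_j$ then gives the claim. Your remark that the expansivity constant must be taken for $\Phi^n$ on the common compact set $K$, so that it is uniform in $j$, is exactly what the paper arranges in item~(i).
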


\begin{proof}[Proof of Claim \ref{claim:continuity-persistent-image}]
Since $g_j^n\to g^n$ uniformly, the semistability statement \eqref{eq:prelim-horseshoe-semistability} provides compact $g_j^n$-invariant sets $\Lambda_j$ and continuous surjections $r_j\colon\Lambda_j\to\Lambda$ satisfying
\[
  r_j\circ g_j^n=g^n\circ r_j,
  \qquad
  \sup_{x\in\Lambda_j}d_M(x,r_j(x))\longrightarrow0.
\]
Uniform convergence of $H_{g_j}$ and uniform continuity of $H_g$ give
\[
  \sup_{x\in\Lambda_j}d\bigl(H_{g_j}(x),H_g(r_j(x))\bigr)\longrightarrow0.
\]
Thus $H_{g_j}|_{\Lambda_j}$ and $H_g\circ r_j$ are uniformly close maps into $K$. Both intertwine $g_j^n$ with $\Phi^n$: for the first map this is the quotient semiconjugacy identity, and for the second it follows from the identity for $r_j$. The uniqueness consequence of expansivity recalled in
\Cref{subsec:prelim-continuity}, item~(ii), therefore gives 
\[
  H_{g_j}|_{\Lambda_j}=H_g\circ r_j
\]
for all sufficiently large $j$. Since $r_j$ is onto, their images equal $H_g(\Lambda)$, proving the claim.
\end{proof}

The continuous injection $H_g|_\Lambda$ is a homeomorphism onto its image, and it conjugates $g^n|_\Lambda$ to
$\Phi^n|_{H_g(\Lambda)}$. By Claim \ref{claim:continuity-persistent-image}, this same compact $\Phi^n$-invariant set is contained in $L(g_j)$ for all
sufficiently large $j$. Since entropy scales by $n$ under
iteration,
\[
\begin{aligned}
  \htop(\Phi_{g_j})&=\frac1n\htop(\Phi^n|_{L(g_j)})\\
  &\geq\frac1n\htop(\Phi^n|_{H_g(\Lambda)})=\frac1n\htop(g^n|_\Lambda)>a.
\end{aligned}
\]
Letting $a\uparrow\htop(\Phi_g)$ proves lower semicontinuity at $g$. Together with Claim \ref{claim:continuity-upper}, this
proves the theorem.
\end{proof}

\section{Questions}\label{sec:questions}
The following questions remain open:
\begin{question}
  Do Theorems \ref{thm:entropy.formula} and \ref{thm:strict-entropy-gap} hold for arbitrary homeomorphisms in the isotopy class of a pseudo-Anosov map? Is $g\mapsto h_{\mathrm{top}}(\Phi_g)$ continuous throughout this isotopy class?
\end{question}

\begin{question}[Periodic growth for analytic maps]
Does every pseudo-Anosov isotopy class contain a real-analytic diffeomorphism \(g\), with \(\#\operatorname{Fix}(g^n)<\infty\) for every \(n\), whose exponential periodic-point growth rate exceeds \(h_{\mathrm{top}}(g)\)? Namely, 
\[
\htop(g)<\limsup_{n\to\infty}\frac1n\log\max\{1,\#\operatorname{Fix}(g^n)\}?  
\]
\end{question}

\begin{question}[Strict monotonicity]
Does a strictly larger quotient necessarily have greater entropy? Namely, does 
\[
L(g_1)\subsetneq L(g_2)\quad\Rightarrow\quad \htop(\Phi_{g_1})<\htop(\Phi_{g_2})?
\]
This question was already stated in \cite[Question 4.1]{HHU2004}. After \Cref{thm:entropy.formula}, it asks whether additional global orbit classes necessarily increase the exponential growth rate of their Nielsen classes.

The case where $g_1=f$ follows from \cite{Handel1988} and the entropy formula found in \Cref{thm:entropy.formula}.
\end{question}

\begin{question}[Entropy gap]This question refines the minimization problem posed in \cite[Question~4.5]{HHU2004}, which asks whether each class of
representatives with the same quotient admits a representative of minimal entropy.

Let $K=L(g_0)$ for some $C^\infty$ diffeomorphism $g_0\simeq f$, and suppose that
\[
  h_{\mathrm{top}}(\Phi|_K)>h_{\mathrm{top}}(f).
\]
Does there exist $\delta(K)>0$ such that
\[
  h_{\mathrm{top}}(g)\ge h_{\mathrm{top}}(\Phi|_K)+\delta(K)
\]
for every $C^\infty$ diffeomorphism $g\simeq f$ satisfying $L(g)=K$?\par

Our theorem gives a positive entropy gap for each smooth representative. The question is whether this gap can tend to zero while the quotient remains fixed. A negative answer would yield a sequence of $C^\infty$ diffeomorphisms $g_j\simeq f$ with $L(g_j)=K$ and
\[
  h_{\mathrm{top}}(g_j)\longrightarrow h_{\mathrm{top}}(\Phi|_K).
\]
In that case, the quotient entropy would be the unattained infimum of the entropies of its smooth realizations.

Does this answer change if we allow $g_0$ and $g$ to be arbitrary homeomorphisms?
\end{question}

\begin{question}
    Does the specification property hold for all $\Phi_g$?
\end{question}

\section*{Acknowledgments}

The authors acknowledge the assistance of ChatGPT (OpenAI) in discussing and refining arguments, checking bibliographical references, improving the exposition, and preparing LaTeX text and figures. The authors take full responsibility for the mathematical content and the final manuscript.

\bibliographystyle{amsplain}
\bibliography{referencecorrected}

\makeatletter
\@setaddresses
\global\let\@setaddresses\relax
\makeatother

\clearpage
\pagestyle{plain}
\appendix

\begingroup
\scriptsize
\renewcommand{\theenumi}{\thesubsection.\arabic{enumi}}
This appendix reproduces the manuscript “Dynamics in the isotopy class of a pseudo-Anosov map” by Federico Rodriguez-Hertz, Jana Rodriguez-Hertz, and Raúl Ures, originally posted on arXiv in 2004 (\cite{HHU2004}) but never published in a journal. We include it here for completeness and ease of reference, with only typographical and formatting adjustments.
\section{Dynamics in the isotopy class of a pseudo-Anosov map}
\label{rrhu:appendix}
\begin{center}
Federico Rodriguez-Hertz, Jana Rodriguez-Hertz, and Ra\'{u}l Ures
\end{center}
\noindent Despite its homotopical stability, new relevant dynamics appear in the
isotopy class of a pseudo-Anosov homeomorphism. We study these new
dynamics by identifying homotopically equivalent orbits, obtaining
a more complete description of the topology of the corresponding quotient
spaces, and their stable and unstable sets. In particular, we get
some insight on how new periodic points appear, among other
corollaries. A list of further questions and problems is added at
the end of the paper.

\subsection{Introduction}
In this paper we consider a pseudo-Anosov homeomorphism $f:M^2\to
M^2$ on a closed surface, and homeomorphisms $g:M^2\to M^2$ in its
isotopy class. The reader not familiar with these concepts may consult \cite{rrhu-flp}.
Following \cite{rrhu-h}, we shall use Katok's {\sffamily\itshape global shadowing}
to compare orbits of $f$ with orbits of $g$. According to this
definition the $f$-orbit of $x$ is globally shadowed by the
$g$-orbit of $y$ if $\sup_{n\in\mathbb Z}D(F^n(\widetilde x), G^n(\widetilde y))<\infty$,
for some adequate lifts $F,G,\widetilde x,\widetilde y$ of $f,g,x,y$, respectively,
to the universal cover ($D$ is any equivariant metric). For simplicity we denote $(f,x)\sim(g,y)$.\par
This equivalence relation permitted to state a kind of homotopical
stability of $f$ in its isotopy class, in the sense that for all $g\simeq f$
each $f$-orbit $(f,x)$ is globally shadowed by (at least) one $g$-orbit $(g,y_x)$
\cite{rrhu-l1,rrhu-h,rrhu-f}. That is, the dynamics of $f$ is repeated all over
its isotopy class. However, unlike the Anosov case, new dynamics
may appear for $g$, which are not globally shadowed by $f$.\par
In this work we study these new dynamics using the following
approach. Given a fixed $g$, we identify all $(g,x)\sim(g,x')$,
obtaining a quotient space $M_L(g)$. The map $g$ naturally induces a homeomorphism $g_L$ on $M_L(g)$.
Observe that, in general $M_L(g)$ is no longer a surface, though $g_L$ is still expansive.\par
 The core of this paper is based on an observation,
due to Fathi \cite{rrhu-f}, of the existence of a ``universal torus" ${\mathcal Z}$ and a hyperbolic extension $\Phi$ of $f$ in ${\mathcal Z}$, for
which $M$ is the smallest non trivial invariant sub-continuum.
The universal cover of ${\mathcal Z}$ is the product of
two trees obtained from the stable and unstable foliations,
respectively.\par
Our first theorem states that all $M_L(g)$ are naturally embedded
in ${\mathcal Z}$, and that $g_L$ is nothing but $\Phi$ restricted to $M_L(g)$
\begin{rrhutheorem}\label{rrhu:teo1}
Let $f:M^2\to M^2$ be a pseudo-Anosov homeomorphism on a closed
surface. Then there exists ${\mathcal Z}$ and a homeomorphism $\Phi:{\mathcal Z}\to{\mathcal
Z}$, such that:
\begin{enumerate}
\item\label{rrhu:teo1.1} $\Phi:{\mathcal Z}\to{\mathcal Z}$ is a {\sffamily\itshape metrically split
 hyperbolic homeomorphism}.
\item \label{rrhu:teo1.2}$\Phi$ has the pseudo-orbit tracing property on
${\mathcal{Z}}$. Additionally, the pseudo orbits of any lift of
$\Phi$ are uniquely shadowed in the universal cover of ${\mathcal{Z}}$.
\item \label{rrhu:teo1.3}For all $g:M^2\to M^2$ homotopic to $f$, there exists a
$\Phi$-invariant subset $L(g)\subset{\mathcal{Z}}$ on which $g_L$ is
conjugate to $\Phi|_{L(g)}$
\end{enumerate}
\end{rrhutheorem}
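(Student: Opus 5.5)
The plan is to take $\mathcal Z$ and $\Phi$ to be Fathi's hyperbolic extension from Section~\ref{sec:hhu-preliminaries}. Set $\mathcal Z=\mathcal T/\Gamma$ with $\mathcal T=\widehat{\mathcal T}^s\times\widehat{\mathcal T}^u$. Let $\Phi$ be the map induced by $\widetilde\Phi=(F_s,F_u)$. It is well defined because $\widetilde\Phi\gamma=\varphi(\gamma)\widetilde\Phi$, and the diagonal $\Gamma$-action is isometric and properly discontinuous~\cite{Fathi1990}. Each item is then checked first upstairs and pushed down through the covering $P$.

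For item~\ref{rrhu:teo1.1}, the scaling relations \eqref{eq:hhu-tree-scaling} give the global splitting \eqref{eq:hhu-global-splitting}. This is the metrically split hyperbolic structure, with stable sets $\{z_s\}\times\widehat{\mathcal T}^u$ and unstable sets $\widehat{\mathcal T}^s\times\{z_u\}$, as in Lemma~\ref{lem:propertiesofwidetildephi}. Local product coordinates on $\mathcal Z$ are inherited because $P$ is a local isometry at scales below the minimal displacement of nontrivial deck elements on the relevant compact pieces.

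For item~\ref{rrhu:teo1.2}, I would proceed in three steps.
\begin{enumerate}
\item Shadow upstairs. Run Bowen's finite shadowing argument using the local product structure, then pass to the limit $M\to\infty$ via the Cauchy estimate $\widetilde d(z^{(M)},z^{(M')})\le 2\epsilon\lambda^{-M}$ and completeness of $\mathcal T$, exactly as in the proof of Lemma~\ref{lem:propertiesofwidetildephi}.
\item Uniqueness upstairs. If two $\widetilde\Phi$-orbits stay at bounded distance for all $n\in\mathbb Z$, then \eqref{eq:hhu-global-splitting} forces $d_s=0$ (letting $n\to+\infty$) and $d_u=0$ (letting $n\to-\infty$), so the points coincide. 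This uniqueness holds at \emph{every} scale, not just small ones, which is the extra statement in item~\ref{rrhu:teo1.2}.
\item Descend. A $\delta$-pseudo-orbit in $\mathcal Z$ is lifted step by step to a $\delta$-pseudo-orbit of a suitable lift of $\Phi$, shadowed upstairs, and projected.
\end{enumerate}

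For item~\ref{rrhu:teo1.3}, I would build $\widetilde H_g$ by the limits \eqref{eq:hhu-coordinate-limits}. Convergence comes from the bounded displacement between $F$ and $G$ together with the $\lambda^{-n}$ contraction in the relevant coordinate. I would then verify equivariance, continuity (uniform limits), the relation $\widetilde H_g\circ G=\widetilde\Phi\circ\widetilde H_g$, and the bound \eqref{eq:hhu-lifted-semiconjugacy-metric}. These in turn give the characterization \eqref{eq:hhu-shadowing-characterization}: by the unconditional uniqueness from item~\ref{rrhu:teo1.2}, $\widetilde H_g(x)$ is the unique $\widetilde\Phi$-orbit at bounded distance from $\iota(G^n x)$.

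The fiber identification then follows from the triangle inequality and that uniqueness. If $G$-orbits of $\widetilde x,\widetilde y$ stay at bounded $D$-distance, both are shadowed by the same $\widetilde\Phi$-orbit. Conversely, equal images give bounded distance between the orbits. Passing to $\Gamma$-orbits shows $H_g(x)=H_g(y)$ if and only if $x\sim_g y$. Setting $L(g)=H_g(M)$, which is compact and $\Phi$-invariant, the induced map $M_L(g)\to L(g)$ is a continuous bijection from a compact space onto a Hausdorff one, hence a homeomorphism, and it conjugates $g_L$ to $\Phi|_{L(g)}$.

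The main obstacle I expect is the descent in item~\ref{rrhu:teo1.2}: $\mathcal Z$ is neither compact nor locally compact, so a uniform scale at which $P$ is injective and isometric is not automatic. I would first try to obtain it from proper discontinuity combined with a lower bound on translation lengths. The translation length of $\gamma$ on $\mathcal T$ dominates its translation lengths on each tree, which are intersection numbers with the measured foliations and are bounded below over nontrivial $\gamma$. If that fails, I would only claim shadowing for pseudo-orbits already lifted to $\mathcal T$, which is all that the construction of $L(g)$ requires.
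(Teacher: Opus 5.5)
Your plan is the paper's own: Fathi's extension, hyperbolicity from the global splitting \eqref{eq:hhu-global-splitting}, Bowen shadowing upstairs made global by the Cauchy/completeness argument, scale-free uniqueness from the splitting, and $L(g)$ as the image of the global-shadowing map, whose fibers are the $\sim_g$-classes. The one variation is in item~3. The appendix applies ``the shadowing lemma'' to $G$-orbits regarded as pseudo-orbits of $\widetilde\Phi$ on $\widetilde L(f)\cong\widetilde M$. You instead write $\widetilde H_g$ through the coordinate limits \eqref{eq:hhu-coordinate-limits}. That is the cleaner choice, because $G$-orbits are only $C$-pseudo-orbits with $C=\sup D(F,G)$ possibly large. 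Your series makes the coarse-scale shadowing, the uniform convergence (hence continuity), the equivariance and the displacement bound immediate. The compact-to-Hausdorff argument then finishes item~3 correctly.

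One passage does need correcting. The obstacle you single out is misplaced, because descending shadowing needs no injectivity radius. Since $d(Pz,Pw)=\inf_\gamma\widetilde d(z,\gamma w)$, a $\delta$-pseudo-orbit in $\mathcal Z$ lifts step by step to a $\delta$-pseudo-orbit of $\widetilde\Phi$: at each step choose a $\gamma$ giving a jump smaller than $\delta$. Since $P$ is $1$-Lipschitz, the projected shadow works. So your step~3 already proves item~2 on $\mathcal Z$, and the fallback is unnecessary.

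A uniform scale is genuinely needed in item~1 (and for expansivity of $\Phi$), to transport local stable and unstable sets and the bracket through $P$. ``Compact pieces'' give nothing there, since $\mathcal Z$ is not locally compact. Moreover, the lower bound you propose is false tree by tree. Write $\ell_s(\gamma),\ell_u(\gamma)$ for the translation lengths on the two trees. Since $\varphi(\gamma)$ acts on $\widehat{\mathcal T}^s$ as $F_s\gamma F_s^{-1}$, one gets $\ell_s(\varphi^{-n}(\gamma))=\lambda^{-n}\ell_s(\gamma)\to0$, and likewise $\ell_u(\varphi^{n}(\gamma))\to0$.

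What is true, and suffices, is a bound on the product displacement. First, $\inf_z\widetilde d(z,\gamma z)=\max\{\ell_s(\gamma),\ell_u(\gamma)\}$, because the two coordinates vary independently. Second, $\ell_s(\gamma)+\ell_u(\gamma)$ is at least the length of the geodesic representative of $\gamma$ in the singular flat metric of the quadratic differential, since that geodesic realizes both intersection numbers. Hence it is at least the flat systole $s_0>0$. So every nontrivial deck transformation moves every point of $\mathcal T$ by at least $s_0/2$, and $P$ is an isometry on balls of radius $s_0/8$. The paper does not argue this point itself; it takes the covering structure of $\mathcal T\to\mathcal Z$ from Fathi.
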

\subsubsection*{Remark}
A {\sffamily\itshape hyperbolic homeomorphism} is to be understood in the sense of Bowen, that is, one having hyperbolic
canonical coordinates \cite{rrhu-b2}. The definition of {\sffamily\itshape metrically splitting} is, {\itshape mutatis
mutandis}, as in Franks (\cite{rrhu-fr}, p.67).

Theorem \ref{rrhu:teo1} provides a unified scenario for all $M_L(g)$ and $g_L$, endowed with a hyperbolic
structure. From now on, we shall write $L(g)$ and $\Phi_g=\Phi|_{L(g)}$ instead of $M_L(g)$ and $g_L$.
We find that:
\begin{rrhutheorem}\label{rrhu:cont.coc} The application $g\mapsto L(g)\in 2^{\mathcal{Z}}$ is
continuous in the isotopy class of $f$, $2^{\mathcal{Z}}$ endowed with the
Hausdorff topology. The set $\{L(g):g\simeq f\}\subset
2^{\mathcal{Z}}$ is arcwise connected
\end{rrhutheorem}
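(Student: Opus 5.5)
The plan is to derive both assertions from one fact: $\widetilde H_g$ depends continuously on $g$, uniformly on $\widetilde M$. Hausdorff continuity of $g\mapsto L(g)=H_g(M)$ then follows at once. Arcwise connectedness follows by pushing an isotopy forward through $g\mapsto L(g)$.

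\emph{Step 1: choice of lifts.} Let $g'$ be $C^0$-close to $g$, both isotopic to $f$. Among the lifts of $g'$ there is exactly one, $G'$, with $\sup_{x}D(G'x,Gx)$ small. This is because the deck group acts properly discontinuously and $D$ is $\Gamma$-invariant and comparable, at small scales, to the lifted Riemannian metric. That lift is the one obtained by lifting a short isotopy from $g$ to $g'$. Concatenating with the isotopy from $f$ to $g$, it is also the lift obtained from an isotopy from $f$ starting at $F$. Hence $G'$ is the lift used to define $\widetilde H_{g'}$. In particular, for an isotopy $(g_t)$ the lifted isotopy $(G_t)$ varies continuously in $t$.

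\emph{Step 2: uniform continuity of $\widetilde H_g$ in $g$.} I will use the explicit formula \eqref{eq:hhu-coordinate-limits} and treat the $s$-coordinate; the $u$-coordinate is symmetric, with $G^{-1}$ and $F_u$. Write
\[
(\widetilde H_g)_s(x)=q_s(x)+\sum_{n\ge1}\bigl(F_s^{-n}q_s(G^nx)-F_s^{-(n-1)}q_s(G^{n-1}x)\bigr)
\]
as a telescoping limit in the tree. By \eqref{eq:hhu-tree-scaling}, the $n$-th step has length
\[
\lambda^{-n}d_s\bigl(q_s(G G^{n-1}x),F_sq_s(G^{n-1}x)\bigr)\le\lambda^{-n}c_G,
\qquad c_G=\sup_y D(Gy,Fy)<\infty .
\]
Moreover $c_{G'}\le c_G+1$ for all $G'$ close to $G$. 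Given $\epsilon>0$, choose $N$ with $\sum_{n>N}\lambda^{-n}(c_G+1)<\epsilon$. Then
\[
d_s\bigl((\widetilde H_g)_s(x),(\widetilde H_{g'})_s(x)\bigr)\le 2\epsilon+d_s\bigl(F_s^{-N}q_s(G^Nx),F_s^{-N}q_s(G'^Nx)\bigr)
\le 2\epsilon+\lambda^{-N}\sup_x D(G^Nx,G'^Nx).
\]
The last supremum is finite-time data. It tends to $0$ as $g'\to g$: by $\Gamma$-equivariance ($G\gamma=\varphi(\gamma)G$) it suffices to take $x$ in a compact fundamental domain, and there $q_s,q_u$ are continuous and $G'^N\to G^N$ uniformly. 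Hence $\sup_x\widetilde d(\widetilde H_g(x),\widetilde H_{g'}(x))\to0$. Passing to $\mathcal Z$ gives $\sup_{M}d(H_g,H_{g'})\to0$.

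\emph{Step 3: conclusions.} If $\sup d(H_g,H_{g'})<\epsilon$, every point of $H_g(M)$ is within $\epsilon$ of $H_{g'}(M)$ and conversely. So the Hausdorff distance between $L(g)$ and $L(g')$ is at most $\epsilon$, which gives continuity into $2^{\mathcal Z}$. Now let $L(g_0)$ and $L(g_1)$ be given. Each $g_i$ is isotopic to $f$, so there is an isotopy $(g_t)_{t\in[0,1]}$ joining $g_0$ to $g_1$ through maps in the class, continuous in the $C^0$ topology. Then $t\mapsto L(g_t)$ is a continuous path in $\{L(g):g\simeq f\}$ joining $L(g_0)$ to $L(g_1)$.

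The main obstacle I expect is Step 1, namely the bookkeeping of lifts. The maps $H_g$ depend on the chosen lift $G$, and a jump to a different lift ($\gamma G$) would translate $\widetilde H_g$. Continuity therefore requires showing that the isotopy-determined lift is locally the unique lift uniformly $D$-close to $G$, independently of which isotopy from $f$ is used. This holds because two isotopies from $f$ to $g$ give lifts differing by a deck transformation commuting appropriately. Since $\operatorname{Mod}$ of a hyperbolic surface has no center issues here (the centralizer of $\pi_1$ is trivial), that deck transformation is trivial. This is the point I would check carefully, and I would try first to deduce it from the triviality of the center of $\Gamma$.
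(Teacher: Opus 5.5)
Your proposal is correct and follows the paper's route. The appendix simply asserts that the semiconjugacies $H_g$ vary continuously with $g$ in the $C^0$ topology, and deduces from this both the Hausdorff continuity of $g\mapsto L(g)$ and, via isotopies, arcwise connectedness; your telescoping estimate based on \eqref{eq:hhu-coordinate-limits} and your lift bookkeeping supply the details it omits. In Step~1 you can even avoid short isotopies: the lift of $g'$ uniformly close to $G$ induces the same automorphism $\varphi$, and two lifts of $g'$ inducing the same $\varphi$ differ by a central, hence trivial, deck transformation.
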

As a Corollary we obtain
\begin{rrhucorollary}\label{rrhu:top.est}
$\Phi_g$ is never topologically stable. If $g$ and $g'$ are
semiconjugate by a semiconjugacy homotopic to the identity then
$L(g)=L(g')$.
\end{rrhucorollary}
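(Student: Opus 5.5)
The statement to prove is Corollary~\ref{rrhu:top.est}. It has two parts: $\Phi_g$ is never topologically stable, and a semiconjugacy homotopic to the identity between $g$ and $g'$ forces $L(g)=L(g')$. I will deduce both from Theorem~\ref{rrhu:teo1} and Theorem~\ref{rrhu:cont.coc}, working in the fixed ambient space ${\mathcal Z}$. Topological stability of $\Phi_g$ is taken to mean: every homeomorphism $C^0$-close to $\Phi_g$ on $L(g)$ is semiconjugate to $\Phi_g$ by a map close to the identity.

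\textbf{Second assertion.} Let $h\circ g=g'\circ h$ with $h\simeq\mathrm{id}$. Lift $h$ to $\widetilde h$ at bounded distance from the identity. Since $G$ and $G'$ both come from isotopies starting at $F$, the lifts can be chosen so that $\widetilde h\circ G=G'\circ\widetilde h$.
\begin{enumerate}
\item Consider $\widetilde H_{g'}\circ\widetilde h$. It satisfies $\widetilde H_{g'}\widetilde h G=\widetilde\Phi\,\widetilde H_{g'}\widetilde h$.
\item It lies at bounded distance from $\iota$, because $\widetilde h$ is at bounded $D$-distance from the identity and $\widetilde H_{g'}$ is at bounded distance from $\iota$.
\item By the uniqueness characterization of global shadowing (the point whose $\widetilde\Phi$-orbit stays at bounded distance from $\iota(G^nx)$ is unique, by the metric splitting), we get $\widetilde H_{g'}\circ\widetilde h=\widetilde H_g$.
\item Descending to $M$ gives $H_g=H_{g'}\circ h$, hence $L(g)=H_g(M)\subset L(g')$.
\end{enumerate}
For the reverse inclusion I would use that a semiconjugacy homotopic to the identity is surjective: a map of a closed surface of degree one is onto. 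Then $L(g')=H_{g'}(h(M))=H_g(M)=L(g)$.

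\textbf{First assertion.} Here I would use Theorem~\ref{rrhu:cont.coc} together with the second assertion.
\begin{enumerate}
\item Perturb $g$ inside its isotopy class, for instance by creating a new periodic orbit or horseshoe in a small disk as in Handel-type constructions. This produces homeomorphisms $g_k\to g$ with $L(g_k)\ne L(g)$ but $L(g_k)\to L(g)$ in the Hausdorff sense.
\item Transport $\Phi_{g_k}$ back to $L(g)$ to obtain perturbations of $\Phi_g$. If this is awkward, I will instead argue by arcwise connectedness: there are invariant sets of $\Phi$ arbitrarily close to $L(g)$ but different from it.
\item Assume $\Phi_g$ is topologically stable. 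Nearby dynamics $\Phi_{g_k}$ would then admit semiconjugacies onto $\Phi_g$ close to the inclusion.
\item By expansivity of $\Phi$ on the compact union $K$ (uniqueness of close semiconjugacies), any such semiconjugacy close to the inclusion must be the inclusion itself. This forces $L(g)\subset L(g_k)$.
\item Arrange the perturbation to go the other way, making $L(g_k)\subsetneq L(g)$, for example by killing a new orbit of $g$ via an isotopy toward the Thurston representative. This gives a contradiction.
\end{enumerate}
In the case $L(g)=L(f)$, use perturbations adding points instead, and argue on the image side.

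\textbf{Main obstacle.} The hard part is making precise which perturbations are allowed in topological stability, since $L(g)$ is not a manifold, and producing a genuinely different nearby quotient inside ${\mathcal Z}$. My first approach is to push $g$ slightly along an arc toward $f$, which is available by Theorem~\ref{rrhu:cont.coc}, and then apply expansive uniqueness of close semiconjugacies to rule out stability.
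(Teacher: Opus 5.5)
Your proof of the second assertion is correct and is the paper's argument. $\widetilde H_{g'}\circ\widetilde h$ is an equivariant semiconjugacy at bounded distance from $\iota$, so uniqueness of bounded $\widetilde\Phi$-orbits gives $\widetilde H_g=\widetilde H_{g'}\circ\widetilde h$. Surjectivity of $h$ then gives $L(g)=L(g')$. The relation $\widetilde h G=G'\widetilde h$ holds automatically, since the two sides differ by a deck transformation of bounded displacement.

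The first assertion, however, has a genuine gap. Under the definition you adopt, topological stability of $\Phi_g$ only constrains homeomorphisms of the fixed space $L(g)$ that are $C^0$-close to $\Phi_g$. The maps $\Phi_{g_k}$ act on other compact sets $L(g_k)$, which need not be homeomorphic to $L(g)$: $L(f)\cong M$, while other quotients need not be surfaces. Step~2 (``transport back'') has no mechanism. Your fallback, that there are invariant sets near $L(g)$ different from it, still produces no perturbation of $\Phi_g$ on $L(g)$, so step~3 cannot be invoked.

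Step~4 is also reversed. Suppose $k$ satisfies $k\circ\Phi_{g_k}=\Phi_g\circ k$, maps $L(g_k)$ into $L(g)$, and is uniformly close to the inclusion $L(g_k)\hookrightarrow K$. Expansive uniqueness then makes $k$ equal to that inclusion, which gives $L(g_k)\subset L(g)$, not $L(g)\subset L(g_k)$. This contradicts nothing when $L(g_k)\subsetneq L(g)$ unless $k$ is also known to be onto, and your definition does not provide that. So the perturbations you need are ones that add dynamics, $L(g_k)\not\subset L(g)$. The orbit-killing perturbations of step~5 go the wrong way, and their existence at arbitrarily small $C^0$ distance is not justified.

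The paper never perturbs $\Phi_g$ on $L(g)$. From the continuity in Theorem~\ref{rrhu:cont.coc} it concludes that no $L(g)$ is an isolated invariant set of $\Phi$. It then deduces that $\Phi_g$ lacks the shadowing property, and reads topological stability ``in this context'' as shadowing. The link between isolation and shadowing is the standard one. If $\Phi_g$ had shadowing, a $\Phi$-orbit staying near $L(g)$ would project to a pseudo-orbit of $\Phi_g$, which would be shadowed inside $L(g)$. Expansivity of $\Phi$ would then force the original orbit into $L(g)$, making $L(g)$ locally maximal. Nearby quotients $L(g')\not\subset L(g)$ contradict this. The idea missing from your proposal is this passage through shadowing and local maximality. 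It converts ``nearby quotients not contained in $L(g)$'' into a contradiction without realizing $\Phi_{g'}$ as a homeomorphism of $L(g)$.
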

\par
Corollary \ref{rrhu:top.est} shows that each $\Phi_g$ has a distinct
dynamics.
 Let us point out that always when a pair $g$, $g'$ are
 semiconjugate, a semiconjugacy can be chosen to be homotopic to a
 symmetry of $f$ (see comments in \S \ref{rrhu:sec.preg}). Although previous statements give some light to the new dynamics
appearing in the isotopy class of $f$, they mainly open many
questions and problems, in \S\ref{rrhu:sec.preg} we state some of
them.\par
It is expectable that the study, and specially the local
analysis of the sets $L(g)$ will bring information about the
dynamics of the $\Phi_g$, and are an interesting topic in its own.
Following result describes the local properties of $L(g)$, new
periodic points, and local stable and unstable sets.

 \begin{rrhutheorem}\label{rrhu:teo.local}Let $g:M^2\to M^2$ be a homeomorphism on
 a closed surface, $g\simeq f$. Then:
 \begin{enumerate}
 \item \label{rrhu:teo4.1} The topological dimension of $L(g)$ is $2$.
 \item \label{rrhu:teo4.2} There exists $\varepsilon>0$ such that $CW^s_{\varepsilon}(x)$
 and $CW^u_\varepsilon(x)$ are nontrivial dendrites for all $x\in L(g)$,
 with diameters bounded from below.
 \item \label{rrhu:teo4.3}If $p\in{\mathcal P}er(\Phi_g)\setminus{\mathcal
 P}er(\Phi_f)$ then $p$ is an endpoint of {\sffamily\itshape both}
 $CW^s_{\varepsilon}(p)$ and $CW^u_\varepsilon(p)$.
 \end{enumerate}
 \end{rrhutheorem}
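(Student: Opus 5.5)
The statement to prove is Theorem~\ref{rrhu:teo.local} of the appendix: the topological dimension of $L(g)$, the dendrite structure of local stable and unstable components, and the endpoint property of new periodic points. I will work entirely in the lifted picture $\mathcal T=\widehat{\mathcal T}^s\times\widehat{\mathcal T}^u$, using the product structure of $\widetilde\Phi$ from Lemma~\ref{lem:propertiesofwidetildephi}. By that lemma, the local stable set of $\widetilde H_g(\widetilde x)$ inside $\widetilde L(g)=\widetilde H_g(\widetilde M)$ is contained in $\{z_s\}\times\widehat{\mathcal T}^u$. This slice is a subset of an $\mathbb R$-tree, so every connected piece of it is uniquely arcwise connected and contains no simple closed curve. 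The plan is to prove local connectedness of $CW^\sigma_\varepsilon$; being a dendrite then follows, since a locally connected continuum inside an $\mathbb R$-tree is a dendrite. Note that $\widetilde H_g$ is uniformly continuous, and $\widetilde H_g\circ G=\widetilde\Phi\circ\widetilde H_g$.

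\emph{Item (2).} For $z=\widetilde H_g(\widetilde x)$, the set $(\widetilde H_g)_s^{-1}(z_s)$ is a closed $G$-related set in $\widetilde M$ that stays uniformly close to the stable leaf through $\widetilde x$, by the displacement bound \eqref{eq:hhu-lifted-semiconjugacy-metric}. I will take the connected component $C$ of $(\widetilde H_g)_s^{-1}(z_s)$ through $\widetilde x$ and show that it is unbounded. The argument is planar: $q_s\circ\widetilde H_g$ is within $C$ of $q_s$, so $C$ separates the complementary regions corresponding to the two sides of a stable leaf in the tree. An unbounded connected set in the disc $\widetilde M$ then projects under $(\widetilde H_g)_u$ to a nondegenerate connected subset of the $u$-tree. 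Its image is a continuum of diameter bounded below, since $(\widetilde H_g)_u$ is within $C$ of $q_u$ and $C$ crosses $u$-distances larger than $3C$. Intersecting with an $\varepsilon$-ball and taking the component gives a nontrivial piece of $CW^s_\varepsilon$. Local connectedness comes from $L(g)$ being a Peano continuum, as the continuous image of $M$. The connected pieces of a Peano continuum intersected with a slice of the local product structure of $\Phi$ are themselves locally connected, because the bracket map is continuous. The diameter bound is uniform by cocompactness under $\Gamma$.

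\emph{Item (1).} Dimension at most $2$ follows since $L(g)$ is covered locally by products of two one-dimensional dendrites via the bracket, and a product of two dendrites has dimension $2$. Dimension at least $2$ follows because $L(f)\subset L(g)$ is a surface.

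\emph{Item (3).} This is the main obstacle. Let $p$ be $\Phi_g$-periodic and not in $L(f)$. Lift it to a fixed point $\widetilde p=(e_s,e_u)$ of a lift $\widetilde\Phi^k\gamma$. This acts on $\widehat{\mathcal T}^s$ as a $\lambda^k$-homothety fixing $e_s$. If $e_s$ were in $\mathcal T^s$, it would correspond to a leaf of $\widetilde{\mathcal F}^s$ invariant under a lift of $f^k$, and similarly for $e_u$. A fixed leaf pair in which each leaf is invariant must intersect, which puts $p$ in $L(f)$. Hence $e_s$ lies in $\widehat{\mathcal T}^s\setminus\mathcal T^s$, and is therefore an endpoint by \cite[Lemma~1.1]{Fathi1990}. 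I expect the delicate step here to be showing that each of the two leaves is genuinely fixed, which needs a Nielsen-type argument; the way around it I plan to use is through expansion. Since $CW^u_\varepsilon(p)$ sits in $\widehat{\mathcal T}^s\times\{e_u\}$ as a subtree containing $e_s$, and $e_s$ is an endpoint of the whole tree, $p$ is an endpoint of that component. The stable case is symmetric, using $e_u$ as an endpoint.
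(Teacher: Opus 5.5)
\textbf{Item (3) has a genuine gap.} What your argument establishes is that $e_s$ and $e_u$ cannot \emph{both} lie in the finite trees: if $e_s\in\mathcal T^s$ and $e_u\in\mathcal T^u$, the two invariant leaves meet and $p\in L(f)$. The contrapositive only says that $e_s\notin\mathcal T^s$ or $e_u\notin\mathcal T^u$. So ``hence $e_s\in\widehat{\mathcal T}^s\setminus\mathcal T^s$'' does not follow, and ``the stable case is symmetric'' tacitly needs $e_u\notin\mathcal T^u$ as well. The endpoint property of $CW^u_\varepsilon(p)$ uses $e_s$, and that of $CW^s_\varepsilon(p)$ uses $e_u$. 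The word \emph{both} in the statement is therefore exactly what is lost. The step you flag as delicate is not delicate: both leaves are automatically invariant, because $\widetilde p$ is fixed by the product map $\gamma\widetilde\Phi^k=(\gamma F_s^k,\gamma F_u^k)$.

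The missing idea, which is the paper's argument, is that a single finite coordinate already forces $p\in L(f)$. Suppose $e_s\in\mathcal T^s$. The stable leaf $q_s^{-1}(e_s)$ is invariant under the lift $\gamma F^k$ of $f^k$, which contracts along it, so the leaf carries a fixed point $\widetilde q$, a lift of a periodic point of $f$. Then $\iota(\widetilde q)$ and $\widetilde p$ are both fixed by $\gamma\widetilde\Phi^k$. This map has at most one fixed point in $\mathcal T$, since its first factor expands $d_s$ by $\lambda^k$ and its second contracts $d_u$ by $\lambda^{-k}$. Hence $\widetilde p=\iota(\widetilde q)\in\widetilde L(f)$, a contradiction. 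The same argument with the unstable leaf $q_u^{-1}(e_u)$, along which $(\gamma F^k)^{-1}$ contracts, rules out $e_u\in\mathcal T^u$. This also supplies the proof, absent from your text, that an invariant leaf pair must intersect.

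\textbf{Items (1) and (2) have further, repairable problems.}

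\emph{Separation argument in (2).} It shows that the level set $(\widetilde H_g)_s^{-1}(z_s)$ separates far points on two sides of $z_s$. That yields \emph{some} unbounded component, not that the component through $\widetilde x$ is unbounded. Worse, when $z_s\in\widehat{\mathcal T}^s\setminus\mathcal T^s$, the point $z_s$ is an endpoint and there are no two sides. Such points occur in $L(g)$ (for instance the periodic points of item (3)), and they still need a nontrivial $CW^s_\varepsilon$ of uniform size.

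\emph{The bracket in (1) and (2).} $L(g)$ is not locally maximal and is not closed under the bracket, so your justification of local connectedness fails. For the same reason, $L(g)$ is not locally covered by products of its own dendrites in item (1); use instead that $\mathcal Z$ is locally a product of two one-dimensional trees.

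\emph{What the paper does.} Local connectedness is automatic: a connected subset of an $\mathbb R$-tree is convex, so a compact connected subset is a compact subtree, hence a dendrite. This is all the paper uses. It takes the uniform lower bound on diameters from a cited result on expansive homeomorphisms of locally connected continua, noting only that $L(g)$ is locally connected.

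\emph{A planar repair, if you prefer one.} Show instead that the level set meets every Jordan curve $\partial U$ around $\widetilde x$. Otherwise $(\widetilde H_g)_s(\partial U)$ lies in a single direction at positive distance from $z_s$, and iterating by $G$ multiplies that distance by $\lambda^n$. Eventually a nonsingular stable leaf separates $G^n\widetilde x$ from the curve $G^n(\partial U)$. This is impossible: the side of the leaf containing the curve is a half-plane, so it contains the disc bounded by the curve, and that disc contains $G^n\widetilde x$. Zoretti's theorem and boundary bumping then make the component through $\widetilde x$ unbounded.
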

The {\sffamily\itshape $\varepsilon$-stable component of $x$}, $CW^s_\varepsilon (x)$, denotes the
component of $x$ in $W^s_\varepsilon(x)$, the $\varepsilon$-local stable set of
$x$. A dendrite is a uniquely arcwise connected curve.\par
 Item \ref{rrhu:teo4.2} improves \cite{rrhu-cs} where, using delicate arguments of plane topology,
it was shown arcwise connectedness of $CW^\sigma_\varepsilon(x)$ for $g$ $C^0$-close to
an expansive homeomorphism. It also extends the result over the isotopy class of
$f$.

\subsection{Global Properties}\label{rrhu:seccion.global}
From now on, $f:M\rightarrow M$ will be a pseudo-Anosov homeomorphism and
$F:\widetilde {M}\rightarrow \widetilde {M}$ any lift of $f$ to the universal covering of $M$.
 Associated to $F$ there are two transversal invariant foliations with singularities,
 $\mathcal F^{u}$ and $\mathcal F^{s}$, with transverse measures contracted and expanded by $F$
 with rates $\lambda>1$ and $\lambda^{-1}$, respectively.\par
We briefly describe some remarkable properties of the construction used by A. Fathi to embed
a pseudo-Anosov homeomorphism in a hyperbolic dynamics. The reader
interested in the details should consult \cite{rrhu-f}.
\begin{enumerate}
\item \label{rrhu:item.1} The set of $\sigma$-leaves ${\mathcal{T}}^\sigma=\widetilde M/_{\widetilde {\mathcal{F}}^\sigma}$ is a
non locally compact, not complete tree, when the distance of two points in ${\mathcal{T}}^\sigma$ is defined as the
minimum over the transverse measures of arcs joining their associated
leaves ($\sigma=s,u$)
\cite{rrhu-ms}.\vspace*{0.5em}
\item The completion $\widehat{{\mathcal{T}}}^\sigma$ of ${\mathcal{T}}^\sigma$ is still a non
locally compact tree \cite{rrhu-ms}, on which $F$ and the fundamental
group $\Gamma$ of $M$ naturally act, since the covering
transformations of $F$ are isometries of the metrics defined in
(\ref{rrhu:item.1}).\vspace*{0.5em}
\item The metric associated to $\widehat{{\mathcal{T}}}^s$ is $\lambda$-expanded
under the action of $F$, while the metric of $\widehat{{\mathcal{T}}}^u$ is
$\lambda^{-1}$-contracted.\vspace*{0.5em}
\item The product action $\widetilde \Phi$ on $\widetilde {\mathcal{Z}}=\widehat{{\mathcal{T}}}^s\times\widehat{{\mathcal{T}}}^u$ is a {\sffamily\itshape metrically split hyperbolic
homeomorphism}. ${\mathcal{Z}}=\widetilde {\mathcal{Z}}/_{\Gamma}$ is a metric space having
$\widetilde {\mathcal{Z}}$ as universal cover (the product metric is
$\Gamma$-equivariant). $\widetilde \Phi$ trivially induces a homeomorphism $\Phi$ on ${\mathcal{Z}}$.\vspace*{0.5em}
\item \label{rrhu:item2.5} The set $\widetilde {L(f)}=\{(l^s,l^u)\in\widetilde {\mathcal{Z}}:l^s\cap
l^u\ne\emptyset\}$ is homeomorphic to $\widetilde M$. Its projection to ${\mathcal{Z}}$, $L(f)$, is
$\Phi$-invariant and $\Phi_f=\Phi|_{L(f)}$ is conjugate to $f$.
Moreover, $L(f)$ is contained in all non trivial $\Phi$-invariant closed
connected subsets of ${\mathcal{Z}}$ \cite{rrhu-f}.
\end{enumerate}

Bowen's proof of the shadowing lemma \cite{rrhu-b}
works also in this setting, both for $\Phi$ and for $\widetilde \Phi$, since ${\mathcal{Z}}$ and $\widetilde {\mathcal{Z}}$ are complete.
Being $\widetilde {\mathcal{Z}}$ metrically split, the fact that $\widetilde \Phi$ is a
hyperbolic homeomorphism easily implies that every
$\widetilde \Phi$-pseudo-orbit is uniquely shadowed.\par
Let us prove that each $g_L$ is a restriction of $\Phi$ to a set homeomorphic to $M_L(g)$:

In view of \ref{rrhu:item2.5}, $\widetilde {L(f)}$ is a universal cover of $M$
and $\widetilde \Phi$ is a lift of $f$ when restricted to this set. Take
 a lift $G$ of $g$, equivariantly homotopic to
 $\widetilde \Phi|_{\widetilde {L(f)}}$. Then each $G$-orbit is a pseudo
 orbit of $\widetilde \Phi$. Shadowing Lemma provides a continuous map $\widetilde H:\widetilde {L(f)}\to \widetilde {\mathcal{Z}}$
that is a $\Gamma$-invariant semiconjugacy between $G$ and
$\widetilde \Phi|_{Im(\widetilde H)}$. This induces a semiconjugacy
$H:L(f)\to Im(H)\subset{\mathcal{Z}}$, with the property that
$x'\in H^{-1}(x)$ if and only if $(g,x)\sim(g, x')$. Hence
$Im(H)=L(g)$ is homeomorphic to $M_L(g)$ and $g_L$ is
conjugate to $\Phi|_{L(g)}$, ending proof of Theorem
\ref{rrhu:teo1}.\par

It is easy to prove that semiconjugacies that send $M$ into ${\mathcal{Z}}$
vary continuously with respect to homeomorphisms $g$ of $M$, in
the $C^0$ topology. This implies that $\mathcal L=\{L(g)\subset {\mathcal{Z}};\:
g\,\,\mbox{homeomorphism of}\,\,M\}$ is an arcwise-connected set
in the Hausdorff topology. In particular, no $L(g)$ is an isolated set of
$\Phi$. This implies they have not the shadowing property, which
in this context means they are not topologically stable.\par
For the proof of Corollary \ref{rrhu:top.est}, take a continuous $h:M\to M$ homotopic
to the identity such that $h\circ g=g'\circ h$. Consider a lift
$\widetilde h$ of $h$ to $\widetilde {L(f)}$, and the $\Gamma$-invariant
semiconjugacies $\widetilde H, \widetilde H':\widetilde {L(f)}\to \widetilde {\mathcal{Z}}$ obtained in the proof
of Theorem \ref{rrhu:teo1.3} for lifts $G$, $G'$ of $g$, $g'$
respectively. We claim that $\widetilde H=\widetilde H'\circ \widetilde h$ for a suitable $\widetilde h$. Indeed,
as $\widetilde h$ can be chosen to be at a bounded distance of the identity, we easily
obtain that the distance between $\widetilde \Phi^n\circ\widetilde H(x)(=\widetilde H\circ G^n(x))$
and $\widetilde \Phi^n\circ \widetilde H'\circ \widetilde h(x)(=\widetilde H'\circ \widetilde h\circ G^n(x)))$ is
uniformly bounded over $n\in{\mathbb Z}$. We conclude $\widetilde H(x)=\widetilde H'\circ \widetilde h(x)$
due to hyperbolicity of $\widetilde \Phi$. The fact that $\widetilde h$ is onto
implies $Im(H)=Im(H')$ and the Corollary follows.

\subsection{Local Properties}

This section is devoted to proving Theorem \ref{rrhu:teo.local}:
\noindent\begin{enumerate}

\item Item \ref{rrhu:teo4.1} is in part a consequence of Theorem
\ref{rrhu:teo1.3}, since ${\mathcal{Z}}$, being the product of two
trees, has topological dimension 2. On the other hand, $L(g)$
contains a copy of $M$ (item (\ref{rrhu:item2.5}) of Section
\S\ref{rrhu:seccion.global}, see also \cite{rrhu-l1,rrhu-h}), hence its topological dimension is 2.

\item Item \ref{rrhu:teo4.2} follows trivially from the fact that $CW^s_{\varepsilon}(x)$ are compact connected
subsets of a tree. These sets have uniform size due to \cite{rrhu-l1} ($L(g)$ are locally connected).

\item Let $p$ be a periodic point of $\Phi$. Take $\widetilde p=(l^s,l^u)$ a lift of $p$.  If $l^s\in {\mathcal{T}}^s$,
there exist a lift $\widetilde \Phi$ of $\Phi$ and a covering transformation $\tau$ such
that $\tau \widetilde \Phi^n(\widetilde p)=\widetilde p$, implying $l_s$ is periodic for $f:M\rightarrow M$. It follows that $l^s$
contains an n-periodic $q$ of $f$. There exists a lift $\widetilde q$ of $q$ to $\widetilde {L(f)}$
that is fixed by $\tau\widetilde  \Phi^n$. Since $\tau \widetilde \Phi^n_{{\mathcal{Z}}}$ has a unique fixed point we obtain that $p=q\in
M$. This shows a $\Phi$-periodic point is in $L(f)$ if and only if its corresponding leaves $l^s_p$ {\sffamily\itshape and} $l^u_p$ are,
respectively, in the trees ${\mathcal{T}}^s$ and ${\mathcal{T}}^u$. \par
Now, points in the completion $\hat{\mathcal{T}}^\sigma$ that are not in ${\mathcal{T}}^\sigma$, are endpoints of the tree
\cite{rrhu-f}, so we arrive to the desired conclusion.

\end{enumerate}
\subsection{Questions \& Comments}\label{rrhu:sec.preg}
The first set of questions concerns the relation between
topological entropy, periodic points, and the sets $L(g)$
\begin{rrhuquestion}
Is the application $g\mapsto h_{top}(\Phi_g)$ continuous? Is it
strictly increasing with respect to inclusions of $L(g)$'s?
\end{rrhuquestion}
\begin{rrhuquestion}
Does Bowen's formula apply for $h_{top}(\Phi_g)$?
\end{rrhuquestion}
\begin{rrhuquestion}\label{rrhu:p.per}
Are periodic points dense in $L(g)$?
\end{rrhuquestion}
Let us point out that, in case $g$ is Axiom A with the strong transversality
condition, it is known from \cite{rrhu-lu} that question \ref{rrhu:p.per} has an affirmative
answer.\par
Observe that $L(g)=L(g')$ does not necessarily imply $g$ is
semiconjugate to $g'$. However; an equivalence relation can be
defined for $g$ in the isotopy class of $f$, namely: $g\approx g'$
iff $L(g)=L(g')$. Is this relation dynamically relevant in some
sense? For example:
\begin{rrhuquestion}
Is every $L(g)$ realized by some Axiom A? That is, does every
$\approx$-class contain an Axiom A?
\end{rrhuquestion}
\begin{rrhuquestion}
Is there a minimal representative (e.g. in the sense of its number of periodic points, topological entropy, etc) in each
$\approx$-class?
\end{rrhuquestion}
\begin{rrhuquestion} Does any $\Phi$-invariant subcontinuum of ${\mathcal{Z}}$
correspond to a $L(g)$, for some $g$ in the isotopy class of $f$?
\end{rrhuquestion}
We would like also to observe that this question is still
unsolved:
\begin{rrhuquestion}
Is $\dim_p L(g)=2$ at {\em all} points $p\in L(g)$? Does $L(g)$
have a special topological structure, such as Cantor manifold,
etc.?
\end{rrhuquestion}
Finally, though it is not in the scope of our paper, it is
interesting to note that the existence of two semiconjugate
homeomorphisms in the isotopy class of a pseudo-Anosov map $f$, whose
semiconjugacy cannot be chosen homotopic to the identity indicates
the presence of symmetries of $f$.

\begingroup
\renewcommand{\refname}{References for Appendix \thesection}
\renewcommand{\bibliofont}{\scriptsize}

\makeatletter
\renewcommand{\@bibtitlestyle}{\section*{\refname}}
\makeatother
\hypersetup{bookmarksdepth=-2}

\endgroup

\endgroup

\end{document}